\title[$p$-adic families for irregular forms]{Arithmetic of $p$-irregular modular forms: families and $p$-adic $L$-functions}
\author{Adel Betina}
\address{Faculty of Mathematics, University of Vienna, Oskar-Morgenstern-Platz 1, A-1090 Wien, Austria}
\email{adelbetina@gmail.com }
\author{Chris Williams}
\address{Mathematics Institute, Zeeman Building, University of Warwick, CV4 7AL, UK}
\email{Christopher.D.Williams@warwick.ac.uk}
\date{\today}
\newcommand{\sar}[2]{\ar@{}[#1]|-*[@]{#2}}
\newcommand{\BCf}{f_{/F}}
\numberwithin{equation}{section}
\titleformat{\subsubsection}[runin]% runin puts it in the same paragraph
{\normalfont\itshape}% formatting commands to apply to the whole heading
{\thesubsubsection.}% the label and number
{0.5em}% space between label/number and subsection title
{}% formatting commands applied just to subsection title
[.\hspace*{6pt}]% punctuation or other commands following subsection title
\titleformat{\subsection}[runin]
{\normalfont\bfseries}% formatting commands to apply to the whole heading
{\thesubsection.}% the label and number
{0.5em}% space between label/number and subsection title
{}% formatting commands applied just to subsection title
[.\hspace*{6pt}]% punctuation or other commands following subsection title
\titlespacing*{\section}{0pt}{1.2\baselineskip}{0.2\baselineskip}
\titlespacing*{\subsection}{0pt}{0.8\baselineskip}{\baselineskip}
\titlespacing*{\subsubsection}{0pt}{0.7\baselineskip}{\baselineskip}
\def\input@path{{../}} 
\newcommand{\Y}{Y_N}
\newcommand{\OS}{\Lambda}
\newcommand{\OSl}{\Lambda_\lambda}
\newcommand{\eps}{\varepsilon}
\newcommand{\cMbar}{\overline{\cM}}
\newcommand{\Gp}{\mathrm{Gal}_p}
\newcommand{\cL}{\mathcal{L}}
\newcommand{\nbc}{\mathrm{nbc}}
\begin{document}

%%==============================================================================
%
%              ABSTRACT
%
%%==============================================================================

\begin{abstract}
Let $f_{\mathrm{new}}$ be a classical newform of weight $\geq 2$ and prime to $p$ level. We study the arithmetic of $f_{\mathrm{new}}$ and its unique $p$-stabilisation $f$ when $f_{\mathrm{new}}$ is $p$-irregular, that is, when its Hecke polynomial at $p$ admits a single repeated root. In particular, we study $p$-adic weight families through $f$ and its base-change to an imaginary quadratic field $F$ where $p$ splits, and prove that the respective eigencurves are both Gorenstein at $f$. We use this to construct a two-variable $p$-adic $L$-function over a Coleman family through $f$, and a three-variable $p$-adic $L$-function over the base-change of this family to $F$. We relate the two- and three-variable $p$-adic $L$-functions via $p$-adic Artin formalism. These results are used in work of Xin Wan to prove the Iwasawa Main Conjecture in this case. 

In an appendix, we prove results towards Hida duality for modular symbols, constructing a pairing between Hecke algebras and families of overconvergent modular symbols and proving that it is non-degenerate locally around any cusp form. This allows us to control the sizes of (classical and Bianchi) Hecke algebras in families. 
\end{abstract}

\maketitle
%\setcounter{tocdepth}{1}
%\tableofcontents

\section{Introduction}
Let $f_{\mathrm{new}} \in S_k(\Gamma_0(M))$ be a classical cuspidal newform with $k\geq 2$ and $p\nmid M$. The Bloch--Kato conjecture predicts that the analytic $L$-function attached to $f_{\mathrm{new}}$ encodes deep arithmetic properties of $f_{\mathrm{new}}$. One of the main tools we have for proving such links is \emph{Iwasawa theory}, which seeks to recast and prove Bloch--Kato in $p$-adic language. In particular, let $f$ be a $p$-stabilisation of $f_{\mathrm{new}}$ to level $N = Mp$; then the \emph{Iwasawa Main Conjecture (IMC)} for $f$ describes the arithmetic of its $\Lambda$-adic Selmer group in terms of its \emph{$p$-adic $L$-function}. The IMC is known to hold under a number of assumptions, including the conjecture that $f$ is \emph{$p$-regular}, that is, that the roots $\alpha_p,\beta_p$ of the Hecke polynomial
\begin{equation}\label{eq:hecke poly}
X^2 - a_p(f_{\mathrm{new}})X + p^{k-1}
\end{equation}
are distinct, where $a_p(f_{\mathrm{new}})$ is the $T_p$-eigenvalue of $f_{\mathrm{new}}$ \cite{SU14, Wan14}. Such results have important applications to the Bloch--Kato conjecture for $f$ (see, for example, \cite{JSW15} for this when $f_{\mathrm{new}}$ is attached to a suitable elliptic curve). 

For modular forms $f$ where the IMC is known, a key tool in the proof is the existence of multi-variable $p$-adic $L$-functions, which interpolate the $p$-adic $L$-functions of classical modular forms as they vary in $p$-adic families through $f$. In the $p$-irregular case, such functions had not previously been constructed. In this paper, we construct:
\begin{itemize}\setlength{\itemsep}{0pt}
	\item a 2-variable $p$-adic $L$-function over a Coleman family through a $p$-irregular form $f$,
	\item and a 3-variable $p$-adic $L$-function over the base-change of such a Coleman family to an imaginary quadratic field where $p$ splits.
\end{itemize}
We also relate these multi-variable $p$-adic $L$-functions, proving they satisfy $p$-adic Artin formalism. Fundamentally using the results of this paper, in \cite{Wan20} Wan has proved the IMC for classical modular forms of level prime to $p$ without requiring a regularity assumption.

There is a well-established conjecture that $p$-irregular forms of weight $k\geq 2$ never exist. This appears, however, completely out of reach at present. Our motivation in studying this case is three-fold: firstly we facilitate an unconditional proof of the IMC; secondly we develop methods that should work in more general settings, such as Hilbert modular forms, where $p$-irregular forms are known to exist (e.g.\ \cite{Chi15}); and finally, in line with recent work of Molina Blanco, we add to what is known about $p$-irregular forms in the hope that this may lead to a contradiction and a proof of the conjecture. We comment more on this in \S\ref{sec:motivation} below.

The $p$-irregular case has been omitted from previous treatments of this topic because the eigenvariety is harder to control in this setting. Accordingly, during the course of our study, we develop new methods for studying eigenvarieties that should be of use much more generally. In particular, at one point we need to control the size of Hecke algebras in families. For classical modular forms, it is possible to do this using overconvergent modular forms and Hida duality; but over imaginary quadratic fields -- the \emph{Bianchi} case -- these tools are not available. In an appendix, we prove results towards an analogue of Hida duality for modular symbols; in particular, we use evaluation maps to construct a pairing between Bianchi modular symbols and the Hecke algebra, and prove that it is non-degenerate locally around any cusp form. We believe these results to be of independent interest, since they require no non-criticality assumption and seem likely to generalise well to overconvergent cohomology in other settings.

\subsection{Families of $p$-adic $L$-functions through $p$-irregular forms}

We state our main results. Let $f_{\mathrm{new}}$ be a $p$-irregular form, let $\alpha_p$ be the unique (repeated) root of the Hecke polynomial \eqref{eq:hecke poly} at $p$, and let $\mathcal{F}$ be a Coleman family through the (unique) $p$-stabilisation 
\[
	f \defeq f_{\mathrm{new}}(z) - \alpha_p f_{\mathrm{new}}(pz).
\] 
This family is captured geometrically via the Coleman--Mazur eigencurve $\cC$: in particular, we can consider $f$ as a point $x_f \in \cC$, and $\f$ as a neighbourhood $V$ of $x_f$. Attached to any classical point $y \in V$, corresponding to a modular form $g$ of weight $k_g+2$, we have $p$-adic $L$-functions $L_p^\pm(g)$, which interpolate the critical values $L(g,\chi,j+1)$ for all $0 \leq j \leq k_g$ and Dirichlet characters $\chi$ of $p$-power conductor with $\chi(-1)(-1)^j = \pm1$. These $p$-adic $L$-functions are each supported on different halves of weight space, and in particular, the $p$-adic $L$-function $L_p(g) \defeq L_p^+(g) + L_p^-(g)$ is supported on all of weight space and interpolates all the critical values. We show that:

\begin{theorem-intro}\label{intro:interpolation}	
	Up to shrinking $V$, there exist unique two-variable $p$-adic $L$-functions $L_p^\pm(V)$ over $V$, such that at any classical point $y \in V$ corresponding to a modular form $g$, we have
\[
	L_p^\pm(V,y) = c_g^\pm L_p^\pm(g),
\]
where the $c_g^\pm \in \overline{\Q}_p$ are non-zero $p$-adic periods depending only on $g$.
\end{theorem-intro}

Taking the Amice transform, $L_p(V) \defeq L_p^+(V) + L_p^-(V)$ can be viewed as a rigid-analytic function 
	\[
		\mathcal{L}_p : V \times \mathscr{X}(\Zp^\times) \longrightarrow \Cp,
	\]
where $\mathscr{X}(\Zp^\times)$ is the space of $p$-adic rigid characters on $\Zp^\times$, and $\mathcal{L}_p$ satisfies the following interpolation property; for:
\begin{itemize} 
\item any such $y \in V$ and $g$ as above, with $g$ of weight $k_g+2$,
\item any Dirichlet character $\chi$ of conductor $p^r > 1$,
\item and any integer $0 \leq j \leq k_g$,
\end{itemize}
we have
	\begin{equation}\label{eqn:interpolation intro}
		\mathcal{L}_p(g,\chi(z)z^j) = c_g^\pm \frac{p^{r(j+1)}}{\alpha_g^r\tau(\chi^{-1})}\cdot\frac{\Lambda(g,\chi^{-1},j+1)}{\Omega_g^\pm},
	\end{equation}
	where $\chi(-1)(-1)^j = \pm 1,$ $\alpha_g$ is the $U_p$-eigenvalue of $g$, $\tau(\chi^{-1})$ is a Gauss sum, $\Lambda(g,\chi^{-1},j+1)$ is the normalised $L$-function of $g$, and $\Omega^\pm_g$ are the complex periods of $g$. (When $\mathrm{cond}(\chi)=1$, there is also an exceptional factor to consider, studied comprehensively in \cite{MTT86}).

We also have a version of Theorem \ref{intro:interpolation} for the base-change of $\f$ to an imaginary quadratic field $F$ in which $p$ splits. In this case, there are no signs to consider, and the $p$-adic $L$-function of a single form is naturally a locally analytic distribution on the Galois group $\Gp$ of the maximal abelian extension of $F$ unramified outside $p\infty$, which is a two-dimensional $p$-adic group. Working with families in this setting is much harder, due to the presence of cuspidal Hecke eigensystems  in degree two of the cohomology of Bianchi hyperbolic threefolds. Accordingly, in this case we make use of the technical machinery developed in \cite{BW18}, in which these families were carefully studied. We prove:

\begin{theorem-intro}\label{intro:interpolation bianchi}	Up to shrinking $V$, there exists a unique three-variable $p$-adic $L$-function $L_p(V_{/F})$ over $V,$ such that at any classical point $y \in V$ corresponding to a cuspform $g$, we have
\[
	L_p(V_{/F},y) = c_g' L_p(g_{/F}),
\]
where $g_{/F}$ is the base-change of $g$ to $F$, $L_p(g_{/F})$ is the $p$-adic $L$-function of $g_{/F}$ and $c_g'$ is a non-zero $p$-adic period depending only on $g$.
\end{theorem-intro}

These base-change forms are \emph{Bianchi modular forms}. Again the three-variable $p$-adic $L$-function admits a precise interpolation formula as a rigid analytic function 
\[
	\mathcal{L}_p : V \times \mathscr{X}(\Gp) \longrightarrow \Cp,
\] 
which takes exactly the form of \cite[Thm.~A]{BW18}. In particular, for each classical $g\in V$ and every Hecke character $\varphi$ critical for $g$ of $p$-power conductor, we have
\[
	\mathcal{L}_p(g, \varphi_{p-\mathrm{fin}}) = c_g'\cdot A(g_{/F},\varphi) \cdot \frac{\Lambda(g_{/F},\ \varphi)}{\Omega_{g_{/F}}}, 
\]
where $A(g_{/F},\varphi)$ is a precise interpolation factor similar to \eqref{eqn:interpolation intro} above and described explicitly in \cite[Thm.~7.4]{Wil17}. We may take the complex period $\Omega_{g_{/F}}$ to be an explicit algebraic multiple of $\Omega_{g}^+\Omega_g^-$, described in \S\ref{sec:artin formalism}.

We prove both of these theorems using the same approach -- namely, that of overconvergent cohomology -- and hence treat them at the same time by working over a field $K$ that we take to be either $\Q$ or imaginary quadratic with $p$ split. Overconvergent cohomology was introduced in \cite{Ste94}, and then used to construct the (one-variable, cyclotomic) $p$-adic $L$-function attached to a classical modular form in \cite{PS11,PS12,Bel12} and the (two-variable) $p$-adic $L$-function attached to a Bianchi modular form in \cite{Wil17}. In all of these papers, the $p$-adic $L$-function of a form $f$ was constructed as the Mellin transform of a certain (canonical up to scalar) class $\Phi_f$ in the relevant overconvergent cohomology group. For a detailed survey of this approach, and a diagram illustrating this method of construction, see \cite[\S1]{BW20}.

We consider variation of the overconvergent cohomology (in degree 1) over the Coleman--Mazur and Bianchi eigencurves, as studied in \cite{Bel12} (classical case) and \cite{BW18} (Bianchi case). In each case, we show that the local ring of the eigencurve is Gorenstein at the $p$-irregular form $f$, and use this to deduce the existence of a family of overconvergent eigenclasses in the cohomology, interpolating the classes $\Phi_g$ as $g$ varies in a Coleman family. The Mellin transform of this family is the desired $p$-adic $L$-function, which now has two or three variables when $K$ is $\Q$ or imaginary quadratic respectively.

We comment briefly on why new arguments are required in the $p$-irregular case. The previous most general constructions of multi-variable $p$-adic $L$-functions, in \cite{Bel12} and \cite{BW18}, treat two cases separately, using the arithmetic of $f$ to deduce results about the structure of the eigenvariety.
\begin{itemize}
	\item Suppose $f$ is \emph{non-critical}, that is, the classical and overconvergent generalised eigenspaces are isomorphic. If $f$ is $p$-regular, then both are thus one-dimensional, from which the eigencurve can be shown to be \'etale over weight space at $f$.
	
	\item Suppose $f$ is \emph{critical}, so that the overconvergent generalised eigenspace is strictly bigger than the classical one. Then the eigencurve is smooth at $f$.
\end{itemize}

In both cases, the structure of the eigencurve can be used to deduce that the coherent sheaf of overconvergent cohomology groups over the eigencurve is in fact a line bundle at $f$ (that is, it is locally free of rank one over the Hecke algebra at $f$), and the (actual) eigenspace of $f$ in the overconvergent cohomology is one-dimensional, and these facts combine to give the construction. 

If $f$ is $p$-irregular, then it is non-critical, but the classical generalised eigenspace is no longer one-dimensional. The eigencurve is not \'etale over weight space, and it is not clear whether it is smooth at $f$. We instead deduce that the overconvergent cohomology is a line bundle at $f$ via a careful study of the classical and overconvergent Hecke algebras, showing that the local ring of the eigencurve -- which is a localised Hecke algebra -- is Gorenstein at $f$. In the Bianchi situation, this is quite subtle, since non-vanishing of $\hc{2}$ can provide an obstruction to weight specialisation. To prove Gorensteinness in this case, in \S\ref{sec:hecke alg in families} we use deformation-theoretic arguments to study the specialisation. We then deduce that the $f$-generalised eigenspace in overconvergent cohomology (and its dual) is free of rank one over the local Hecke algebra by combining properties of the classical spaces, non-criticality and formal properties of Gorenstein rings.

There are plenty of examples of weight one classical eigenforms which are irregular at $p$. Such eigenforms have critical slope. The recent works \cite{BDP,BD} studied the geometry of the eigencurve at such points following a new approach, hence deducing some arithmetic properties on trivial zeros of their adoint $p$-adic $L$-functions (that is, the Kubota--Leopoldt and Katz $p$-adic $L$-functions).  In contrast to the results of this paper, the local ring at these irregular weight one eigenforms is never Gorenstein and their associated overconvergent generalised eigenspace contains non-classical $p$-adic modular forms; hence the construction of the two-variable $p$-adic $L$-function around these points remains an open and challenging question in Iwasawa theory.

\subsection{$p$-adic Artin formalism}
A third tool required for the proof of the IMC is a formula relating the $p$-adic $L$-functions of $f$ and its base-change $f_{/F}$, for $F$ the imaginary quadratic field above. In particular, let $L_p^{\mathrm{cyc}}(f_{/F})$ be the restriction of $L_p(f_{/F})$ to the cyclotomic line, a copy of $\Zp^\times$ inside $\Gp$. Then we have:

\begin{theorem-intro}\label{thm:artin formalism-main}
	$L_p^{\mathrm{cyc}}(f_{/F}) = L_p(f)L_p(f \otimes \chi_{F/\Q})$ as distributions on $\Zp^\times$.
\end{theorem-intro}

Here $L_p(f \otimes \chi_{F/\Q})$ is the $p$-adic $L$-function of $f$ twisted by the quadratic character $\chi_{F/\Q}$ associated to $F$. This interpolates the critical $L$-values $\Lambda(f,\chi_{F/\Q}\chi,j+1)$ for $\chi$ as above. To prove Theorem \ref{thm:artin formalism-main}, we use the strategy of \cite{BW18}; indeed, this factorisation holds at a Zariski-dense set of classical points in the eigencurve, as can be seen from the interpolation formula and growth properties of the respective $p$-adic $L$-functions, and this interpolates to give the factorisation at the level of two-variable $p$-adic $L$-functions. We prove the theorem by specialising to the form $f$. Theorem \ref{thm:artin formalism-main} \emph{cannot} be seen directly from the interpolation and growth properties at $f$, since $L_p(f)L_p(f \otimes \chi_{F/\Q})$ is $(k+1)$-admissible and there are precisely $k+1$ critical integers for $L(f,s)$.

\subsection{Motivation and Tate's conjecture}\label{sec:motivation}
Tate's conjecture on the dimension of Chow groups of smooth projective varieties over finite fields \cite[\S2]{Mil94} predicts the non-existence of $p$-irregular cuspforms of weight $\geq 2$. Still, a proof of this conjecture eludes mathematicians to this day, and it stands as one of the hardest open questions in arithmetic geometry.

One might hope to prove the non-existence of such $p$-irregular points \emph{without} appealing to Tate's conjecture, but instead by deriving a contradiction from Iwasawa theory.
To expand on this, recent work of Molina Blanco \cite{Mol20} studies $p$-adic variation of the $L$-function attached to a $p$-irregular form $f$. In addition to the standard $p$-adic $L$-function considered in the present paper, he proves the existence of an attached `extremal' $p$-adic $L$-function, interpolating the same ($p$-depleted) $L$-values with different interpolation factors at $p$. He shows that this $p$-adic $L$-function can be concretely linked to the two-variable $p$-adic $L$-function $\cL_p$ of this paper: precisely, it can be obtained by differentiating $\cL_p$ in the weight direction and then specialising at $f$. He further speculates that the existence of such an object might lead to a contradiction; and one could hope to derive such a contradiction by exploiting the Iwasawa theory of $f$, as explored here.

Beyond this, there are several more concrete reasons for studying this case. Most directly, Theorems \ref{intro:interpolation}, \ref{intro:interpolation bianchi} and \ref{thm:artin formalism-main} are used fundamentally in \cite{Wan20} to prove the IMC without any $p$-regularity assumption. Our study should also prove a test-case for more general `badly behaved' situations; for example, situations where cohomological $p$-irregular forms do exist (e.g.\ Hilbert modular forms), and other settings where classical constructions of families $p$-adic $L$-functions break down (where classical generalised eigenspaces are not 1-dimensional, or smoothness of the eigenvariety is not known).

\subsection*{Structure of the paper} In \S\ref{sec:classical cohomology}, we provide a study of classical cohomology groups and Hecke algebras localised at $p$-irregular forms. In \S\ref{sec:oc cohomology}, we recap overconvergent cohomology and the construction of $p$-adic $L$-functions for single forms. The heart of the paper is \S\ref{sec:geometry at irregular points}, where use overconvergent cohomology in families to study the geometry of the classical and Bianchi eigencurves at irregular points, and prove our main Gorensteinness result. In \S\ref{sec:multi variable} we use this to construct the multi-variable $p$-adic $L$-functions, and conclude in \S\ref{sec:artin formalism} by proving $p$-adic Artin formalism.

\subsection*{Acknowledgements} 
We are grateful to Xin Wan, whose questions motivated this project, and for extensive discussions on the subject. We also thank the referees for their valuable comments and corrections. A.B.\ was supported by the EPSRC Grant EP/R006563/1 and by START-Prize Y966 of the Austrian Science Fund (FWF). C.W.\ was supported by an EPSRC Postdoctoral Fellowship EP/T001615/1.

\section{Classical cohomology at irregular forms}\label{sec:classical cohomology}

\subsection{Basic notation}\label{sec:notation}
Let $p$ be a prime, and let $F$ be an imaginary quadratic field in which $p$ splits as $\pri\pribar$. Let $K$ be either $\Q$ or $F$, let $D = \mathrm{disc}(K)$, and let $\cO_K$ be its ring of integers. Let $\Sigma_{\R}$ denote the set of complex embeddings of $K$. Let $G = \mathrm{Res}_{K/\Q}\GLt$. Let $\cO_p \defeq \cO_K\otimes_{\Z}\Zp$. 

Let $M$ be an integer coprime to $p$ and $D$ and let and $N = Mp$. Let $f_{\mathrm{new}}$ be either:
\begin{itemize}
\item[(a)] a classical newform of weight $\lambda = k+2 \geq 2$ and level $\Gamma_0(M)$ (if $K=\Q$),
\item[(b)] or the base-change of such a form to $F$, a Bianchi modular eigenform of weight $\lambda = (k,k) \geq 2$ and level $\Gamma_0(M\cO_K)$ (if $K = F$).
\end{itemize}
In case (a) (resp.\ (b)) $\lambda$ corresponds to the character $\smallmatrd{s}{}{}{t} \mapsto s^k$ (resp.\ $\smallmatrd{s}{}{}{t} \mapsto s^k\overline{s}^k$) of the diagonal torus in $G$. In case (b), we assume the original form does not have CM by $F$, so $f_{\mathrm{new}}$ is cuspidal. We write $S_\lambda(\Gamma_0(N))$ for the space of cusp forms of weight $\lambda$ and level $\Gamma_0(N)$.

If $K=\Q$, let $\alpha_p$ be a root of the Hecke polynomial\footnote{Note the shift by 2 here; we use $k+1$ since our convention is that $f$ has weight $k+2$.} (at $p$) $X^2 - a_p(f_{\mathrm{new}})X + p^{k+1}$ of $f_{\mathrm{new}}$, and if $K\neq \Q$, let $\alpha_{\pri} = \alpha_{\pribar} = \alpha_p$ be the corresponding roots  at $\pri$ and $\pribar$. Let $f$ be the corresponding $p$-stabilisation of $f_{\mathrm{new}}$ to level $\Gamma_0(N)$ (that is, an eigenform with $U_{\pri}$-eigenvalue $\alpha_{\pri}$ for each $\pri|p$).

Let $\bH_{N}^{\mathrm{tame}}$ denote the abstract tame Hecke algebra at level $\Gamma_0(N)$ at $p$, that is, the free $\Z$-algebra generated by the Hecke operators $T_v$ (for finite places $v\nmid N$ of $K$); we work at level $\Gamma_0$, so exclude the diamond operators. Let
\[
	\bH_N \defeq \bH_N^{\mathrm{tame}}[ \{U_{\pri} : \pri|p\}].
\] 
 If $\cM$ is a module on which $\bH_{N}$ acts, we write $\cM[f]$ (resp.\ $\cM_{f}$) for the eigenspace (resp.\ generalised eigenspace) upon which $\bH_{N}$ acts with the same eigenvalues as $f$. Note that we have a surjective map $\bH_N\otimes \Q \rightarrow L_f$, where $L_f$ is the Hecke field of $f$, given by sending $T_v$ to the eigenvalue $a_v$ and $U_{\pri}$ to $\alpha_{\pri}$. This gives rise to a maximal ideal $\m_f \subset \bH_N\otimes L_f$. If $\cM$ is a finite-dimensional vector space, then $\cM_{f}$ is the localisation of $\cM$ at $\m_f$.

Throughout, the superscript $\eps$ will denote either a choice of sign $\pm$ (when $K=\Q$) or an empty condition (when $K = F$), reflecting the fact that $\Q$ has one real embedding whilst $F$ has none.

\subsection{Generalised eigenspaces of modular forms}

It is well-known that in the $p$-regular case, the generalised eigenspaces $S_\lambda(\Gamma_0(N))_{f}$ are 1-dimensional. This uses Strong Multiplicity One and the fact that $p$-regularity means the $U_{\pri}$ operators are diagonalisable at level $N$ for all $\pri|p$. In the irregular case, we no longer have this.

Recall $N = Mp$, and $f_{\mathrm{new}}$ is new at level $M$. Let $f_{\mathrm{new}}^N \in S_\lambda(\Gamma_0(N))$ be $f_{\mathrm{new}}(z)$ considered to have level $N$. Then $f_{\mathrm{new}}^N$ is an eigenform for $\bH_N^{\mathrm{tame}}$, that is, away from $p$. 

\begin{proposition}\label{prop:dims1} Suppose $f_{\mathrm{new}}$ is $p$-irregular. Then:
	\begin{enumerate}
		\item If $K=\Q$, then
		\[
		\mathrm{dim}_{\C} \ S_\lambda(\Gamma_0(N))_{f} = 2, \hspace{12pt}\text{and}\hspace{12pt}   	\mathrm{dim}_{\C} \ S_\lambda(\Gamma_0(N))[f] = 1.
		\]
		\item If $K = F$ is imaginary quadratic, then
		\[
		\mathrm{dim}_{\C} \ S_\lambda(\Gamma_0(N))_{f} = 4, \hspace{12pt}\text{and}\hspace{12pt}   	\mathrm{dim}_{\C} \ S_\lambda(\Gamma_0(N))[f] = 1.
		\]
	\end{enumerate}
In both cases the eigenspaces $S_\lambda(\Gamma_0(N))[f]$ are equal to $\C f$.
\end{proposition}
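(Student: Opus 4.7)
\medskip

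\noindent\textbf{Proof proposal.} The plan is to reduce the claim to a linear algebra computation on the oldform subspace, using strong multiplicity one plus the Atkin--Lehner theory of oldforms, and then analyse the action of the $U_\pri$-operators in the $p$-irregular case.

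First I would localise at the tame Hecke eigensystem of $f_{\mathrm{new}}$. Let $\m_f^{\mathrm{tame}} \subset \bH_N^{\mathrm{tame}}\otimes L_f$ be the maximal ideal cut out by the eigensystem of $f_{\mathrm{new}}$ away from $p$. By strong multiplicity one (in the classical case, and in the Bianchi case where it is known for cuspidal forms not of CM type by $F$, which we have assumed), the localisation $S_\lambda(\Gamma_0(M))_{f_{\mathrm{new}}}$ at level $M$ is one-dimensional, spanned by $f_{\mathrm{new}}$. Pushing up through the degeneracy maps and using newform theory, the localisation of $S_\lambda(\Gamma_0(N))$ at $\m_f^{\mathrm{tame}}$ is spanned by the images of $f_{\mathrm{new}}$ under the degeneracy maps indexed by divisors of $p$: in the classical case these are $f_{\mathrm{new}}(z), f_{\mathrm{new}}(pz)$, giving dimension $2$; in the Bianchi case, since $p = \pri\pribar$ splits, these are $f_{\mathrm{new}}(\mathfrak{d} z)$ for $\mathfrak{d} \in \{1,\pri,\pribar,p\}$, giving dimension $4$.

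Next I would compute the matrix of the $U$-operators on this oldform space. Using the Hecke recursion $a_{\pri^n} = a_\pri a_{\pri^{n-1}} - p^{k+1} a_{\pri^{n-2}}$ (valid at $\pri \nmid M$), a direct $q$-expansion calculation shows that on the basis $\{f_{\mathrm{new}}(z), f_{\mathrm{new}}(\pri z)\}$, the operator $U_\pri$ acts by the matrix
\[
\begin{pmatrix} a_\pri & 1 \\ -p^{k+1} & 0 \end{pmatrix},
\]
whose characteristic polynomial is the Hecke polynomial $X^2 - a_\pri X + p^{k+1}$. In the $p$-irregular case this polynomial has the unique double root $\alpha_\pri$, and the matrix is manifestly not the scalar $\alpha_\pri I$ (since $U_\pri$ sends $f_{\mathrm{new}}(\pri z)$ to $f_{\mathrm{new}}(z)$, not to a multiple of $f_{\mathrm{new}}(\pri z)$), so it is a non-trivial Jordan block of size $2$ with eigenvalue $\alpha_\pri$. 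Hence on this $2$-dimensional slice, the $\alpha_\pri$-eigenspace is $1$-dimensional and the generalised eigenspace is $2$-dimensional.

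In the classical case this immediately gives $\dim_\C S_\lambda(\Gamma_0(N))_f = 2$ and $\dim_\C S_\lambda(\Gamma_0(N))[f] = 1$. In the Bianchi case I would then observe that $U_\pri$ and $U_\pribar$ commute and act on disjoint ``coordinates'': identifying the $4$-dimensional oldform space with $V_\pri \otimes V_\pribar$ via $f_{\mathrm{new}}(\pri^a\pribar^b z) \leftrightarrow e_a\otimes e_b$, the operator $U_\pri$ acts as $J\otimes I$ and $U_\pribar$ as $I\otimes J$, where $J$ is the $2\times 2$ Jordan block computed above. The joint generalised eigenspace is then the full tensor product (dimension $4$), while the joint eigenspace is the tensor of the two $1$-dimensional kernels, hence $1$-dimensional. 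The unique (up to scalar) eigenvector in each case is visibly the $p$-stabilisation $f$, giving $S_\lambda(\Gamma_0(N))[f] = \C f$.

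The only mildly subtle point is justifying the initial reduction via strong multiplicity one and the structure of the oldform space in the Bianchi setting; the rest is a two-by-two (or tensor-square two-by-two) linear algebra exercise, and the $p$-irregularity is used precisely to guarantee that the Hecke polynomial at each prime above $p$ has a repeated root, forcing the non-semisimple Jordan form.
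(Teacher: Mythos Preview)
Your proposal is correct and follows essentially the same route as the paper: strong multiplicity one pins down the oldform space, the explicit $2\times 2$ matrix of $U_p$ is shown to be a non-trivial Jordan block at the repeated root $\alpha_p$, and the Bianchi case is the tensor square of this picture. The only cosmetic difference is that the paper phrases the Bianchi step in automorphic language, identifying the space with $(\pi_p^{I_p})^{\otimes 2}$, whereas you write out the tensor product $V_{\pri}\otimes V_{\pribar}$ directly; these are the same argument.
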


\begin{proof}
	First we work over $K = \Q$. By strong multiplicity one, we know that $S_\lambda(\Gamma_0(M))[f_{\mathrm{new}}]$ is a line, where we consider instead the action of the tame Hecke algebra $\bH_M^{\mathrm{tame}}$. Stabilisation at $p$ commutes with prime to $p$ Hecke operators, so the subspace of $S_\lambda(\Gamma_0(N))$ on which $\bH_N^{\mathrm{tame}}$ acts as $f$ is 2-dimensional, spanned by $f_{\mathrm{new}}^N$ and $f_{\mathrm{new}}(pz)$.  Moreover, since $U_p$ has a single eigenvalue $\alpha_p$ on this space and $\bH_N^{\mathrm{tame}}$ acts semi-simply on $S_\lambda(\Gamma_0(N))$, it follows that the generalised eigenspace $S_\lambda(\Gamma_0(N))_{f} $ for $\bH_N$ is equal to the eigenspace of the character of $\bH_N^{\mathrm{tame}} \to \C$ associated to $f_{\mathrm{new}}$, and is thus also 2-dimensional, spanned by $f_{\mathrm{new}}^N$ and $f_{\mathrm{new}}(pz)$. 
	
	To see that the usual eigenspace is a line, we show that $U_p$ does not act semisimply. By a standard calculation (see, for example, \cite[\S9.2]{RS17}), in the above basis the matrix of $U_p$ is $\smallmatrd{a_p}{1}{-p^{k+1}}{0}$, where $a_p = a_p(f_{\mathrm{new}})$ is the $T_p$-eigenvalue of $f_{\mathrm{new}}$. In particular, we have
	\[
	(U_p - \alpha_p) f_{\mathrm{new}}^N = (a_p - \alpha_p) f_{\mathrm{new}}^N - p^{k+1} f_{\mathrm{new}}(pz).
	\]
	Now, note that using irregularity, we have $(X - \alpha_p)^2 = X^2 - a_pX + p^{k+1}$, that is, $2 \alpha_p = a_p$ and $\alpha_p^2 = p^{k+1}$. Thus
	\[
	(U_p - \alpha_p) f_{\mathrm{new}}^N(z) = \alpha_p\big[f_{\mathrm{new}}^N(z) - \alpha_p f_{\mathrm{new}}(pz)\big] = \alpha_p f(z),
	\]
	by definition of the $p$-stabilisation $f$. In the basis $\{f,f_{\mathrm{new}}^N\}$, therefore, the matrix of $U_p$ is 
	\begin{equation}\label{eq:Up matrix}
	U_p = \matrd{\alpha_p}{\alpha_p}{0}{\alpha_p},
	\end{equation}
	so the $U_p$-eigenspace is 1-dimensional. Since $f$ is an eigenform it is thus generated by $f$.
	
	In automorphic terms, this says that if $\pi = \otimes_v' \pi_v$ is the automorphic representation of $\GL_2(\A_{\Q})$ generated by $f_{\mathrm{new}}$, then $\pi_p^{I_p}$ is two-dimensional with one-dimensional [$U_p = \alpha_p$]-eigenspace, where $I_p = \{\smallmatrd{a}{b}{c}{d} \in \GLt(\Zp) : p |c\}$ is the Iwahori subgroup.

	For (2), it is more convenient to use the language of automorphic representations. let $\pi_F = \otimes_v' \pi_{F,v}$ be the automorphic representation of $\GL_2(\A_F)$ corresponding to the base-change to $F$. Since $f_{\mathrm{new}}$ is base-change in this case, both $\pi_{F,\pri}$ and $\pi_{F,\pribar}$ are just copies of the representation $\pi_p$ of the corresponding classical newform, which has the shape above. Thus  $S_\lambda(\Gamma_0(N))_{f}$ is 4-dimensional, corresponding to the tensor product $\pi_{F,\pri}^{I_\pri} \otimes \pi_{F,\pribar}^{I_{\pribar}} = (\pi_p^{I_p})^{\otimes 2}$, and the eigenspace $S_{\lambda}(\Gamma_0(N))[f]$ is the 1-dimensional space spanned by $\phi_{\pri} \otimes \phi_{\pribar}$, where $\phi_{v}$ is a generator of the 1-dimensional $U_{v}$ eigenspace in $\pi_{v}^{I_v}$. Again, as $f$ is an eigenform it generates this line.
\end{proof}

\begin{corollary}\label{lem:Up - alpha}
	For $K = \Q$, we have
	\[
	(U_p - \alpha_p) f_{\mathrm{new}}^N = \alpha_p f, \ \ \ \ \ \ (U_p - \alpha_p)^2f_{\mathrm{new}}^N = 0.
	\]
	For $K = F$ imaginary quadratic, we have 
	\[
	(U_{\pri} - \alpha_p)(U_{\pribar} - \alpha_p) f_{\mathrm{new}}^N = \alpha_p^2 f, \ \ \ \ \ \ (U_{\pri} - \alpha_p)^2f_{\mathrm{new}}^N = (U_{\overline{\pri}} - \alpha_p)^2f_{\mathrm{new}}^N= 0.
	\]
	In both cases, the generalised eigenspaces $S_\lambda(\Gamma_0(N))_f$ are generated over $\C[\bH_N]$ by $f_{\mathrm{new}}^N$.
\end{corollary}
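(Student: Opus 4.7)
The plan is to extract the first two identities essentially from the calculations already performed inside the proof of Proposition \ref{prop:dims1}, and then deduce the generation statement by linear algebra, observing that the relevant Hecke operators generate the full generalised eigenspace from $f_{\mathrm{new}}^N$.

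For the case $K=\Q$, the identity $(U_p-\alpha_p)f_{\mathrm{new}}^N=\alpha_p f$ is literally the computation carried out in the proof of Proposition \ref{prop:dims1}; equivalently, it is simply reading off the second column of the matrix $\smallmatrd{\alpha_p}{\alpha_p}{0}{\alpha_p}$ expressing $U_p$ in the basis $\{f,f_{\mathrm{new}}^N\}$. The identity $(U_p-\alpha_p)^2 f_{\mathrm{new}}^N=0$ is then immediate either from this upper triangular form with repeated eigenvalue, or by applying $(U_p-\alpha_p)$ once more and using $(U_p-\alpha_p)f=0$.

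For the Bianchi case, the idea is to reduce to the classical case via the tensor factorisation of the local representation at $p$ already used in the proof of Proposition \ref{prop:dims1}. Since $f_{\mathrm{new}}$ is the base-change of a classical form, the automorphic representation satisfies $\pi_{F,\pri}^{I_\pri}\otimes\pi_{F,\pribar}^{I_{\pribar}}=(\pi_p^{I_p})^{\otimes 2}$, with $U_\pri$ acting on the first factor and $U_\pribar$ on the second. Under this identification $f$ corresponds to $\phi\otimes\phi$ (where $\phi$ generates the line $\pi_p^{I_p}[U_p=\alpha_p]$) while $f_{\mathrm{new}}^N$ corresponds to $f_{\mathrm{new}}^N\otimes f_{\mathrm{new}}^N$. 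The identities then reduce to the classical case factor-by-factor: applying $(U_\pri-\alpha_p)$ and $(U_\pribar-\alpha_p)$ in turn gives $\alpha_p f\otimes f_{\mathrm{new}}^N$ and then $\alpha_p^2 f\otimes f=\alpha_p^2 f$, while applying $(U_\pri-\alpha_p)$ twice vanishes from the $\Q$-case nilpotency on the first factor, and similarly for $\pribar$.

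Finally, for the generation statement, one just reads off what the above calculations produce. Over $\Q$, the orbit $\C[\bH_N]\cdot f_{\mathrm{new}}^N$ contains $f_{\mathrm{new}}^N$ and $(U_p-\alpha_p)f_{\mathrm{new}}^N=\alpha_p f$, hence contains $f_{\mathrm{new}}(pz)=\alpha_p^{-1}(f_{\mathrm{new}}^N-f)$, and thus fills the 2-dimensional generalised eigenspace. Over $F$, the same orbit contains the four vectors $f_{\mathrm{new}}^N\otimes f_{\mathrm{new}}^N$, $f\otimes f_{\mathrm{new}}^N$, $f_{\mathrm{new}}^N\otimes f$, $f\otimes f$ (obtained by applying suitable products of $(U_\pri-\alpha_p)$ and $(U_\pribar-\alpha_p)$), which form a basis of the 4-dimensional generalised eigenspace by Proposition \ref{prop:dims1}. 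There is no real obstacle; the only thing to be careful about is matching $f_{\mathrm{new}}^N$ on the Bianchi side with $f_{\mathrm{new}}^N\otimes f_{\mathrm{new}}^N$ under the tensor factorisation, which is exactly the dictionary already invoked in the proof of Proposition \ref{prop:dims1}.
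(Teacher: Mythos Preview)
Your proposal is correct and follows essentially the same strategy as the paper: for $K=\Q$ both you and the paper simply cite the computation inside Proposition~\ref{prop:dims1}, and for $K=F$ both reduce to the classical case one prime at a time. The only cosmetic difference is that the paper phrases the Bianchi reduction via the intermediate partial stabilisations $f^{\pri}$ and $f^{\pribar}$ (showing $(U_{\pribar}-\alpha_p)f_{\mathrm{new}}^N=\alpha_p f^{\pribar}$ and then $(U_{\pri}-\alpha_p)f^{\pribar}=\alpha_p f$), whereas you phrase it via the tensor decomposition $(\pi_p^{I_p})^{\otimes 2}$; these are the same computation in different notation, with your $f\otimes f_{\mathrm{new}}^N$ and $f_{\mathrm{new}}^N\otimes f$ playing the role of $f^{\pri}$ and $f^{\pribar}$.
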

\begin{proof}
	In the case $K=\Q$, this is entirely proved above. For $K = F$ imaginary quadratic, note that $p$-stabilisation is a composition of $\pri$-stabilisation with $\pribar$-stabilisation. Let $f^{\pri}$ and $f^{\pribar}$ be the $\pri$- and $\pribar$-stabilisations of $f_{\mathrm{new}}$; then calculations exactly as above show that
	\begin{equation}\label{eq:bianchi 1}
	(U_{\pribar}-\alpha_p) f_{\mathrm{new}}^N = \alpha_p f^{\pribar}
	\end{equation}
	in the space of cusp forms. Similarly we have $(U_{\pri} - \alpha_p) f_{\mathrm{new}} = \alpha_p f^{\pri}.$
	But $\pri$ and $\pribar$ stabilisations commute, as do the $U_{\pri}$ and $U_{\pribar}$ operators; and $\pribar$-stabilisation commutes with $U_{\pri}$. Combining, we conclude that
	\begin{equation}\label{eq:bianchi 2}
	(U_{\pri} -\alpha_p) f^{\pribar} = \left[\pribar\text{-stabilisation of }(U_{\pri} - \alpha_p)f_{\mathrm{new}}^N\right] = \left[\pribar\text{-stabilisation of }\alpha_p f^{\pri}\right] = \alpha_p f,
	\end{equation}
	and the result follows by multiplying \eqref{eq:bianchi 2} by $\alpha_p$ and using \eqref{eq:bianchi 1}.
\end{proof}

\subsection{Classical cohomology}\label{sec:eichler-shimura}
Define a locally symmetric space 
\[
	\Y \defeq G(\Q) \backslash G(\A)/U_0(N)U_\infty Z_\infty^+,
\] 
where $U_0(N) \subset G(\widehat{\Z})$ is the open compact subgroup of matrices that are upper triangular mod $N$, $U_\infty$ is the standard maximal compact subgroup of $G(\R)$ and $Z_\infty^+$ is the centre of $G(\R)^+\defeq$ the identity  connected component of $G(\R)$.
Let $V_\lambda$ be the algebraic representation of $G$ of highest weight $\lambda$, and $V_\lambda^\vee$ its dual, which naturally gives rise to a local system $\sV_\lambda^\vee$ on $\Y$ via the action of $U_0(N)$ (see e.g.\ \cite[(I.1)]{BW20}). We have $V_\lambda(\C) = \mathrm{Sym}^k(\C^2)$ (resp. $\mathrm{Sym}^k(\C^2) \otimes [\mathrm{Sym}^k(\C^2)]^c$, for complex conjugation $c$) when $K$ is $\Q$ (resp.\ imaginary quadratic). The Hecke operators act on the cohomology via correspondences (see e.g.\ \cite[\S4]{Hid94}).

When $K=\Q$, we have an involution on $\uhp$ sending $z$ to $-\overline{z}$, which induces an involution on $\Y$, and hence an involution $\iota$ on the cohomology. For any local system $\sM$ on $\Y$ this in turn induces a decomposition
\[
	\hc{i}(\Y,\sM) = \hc{i}(\Y,\sM)^+ \oplus \hc{i}(\Y,\sM)^-
\]
on the cohomology, corresponding to the $\pm1$ eigenspaces for $\iota$, and this decomposition is Hecke-equivariant. Moreover for us $2$ will always be invertible on $\sM$, and then there are natural projectors $\mathrm{pr}^\pm = (1\pm\iota)/2$ to these subspaces. We use a superscript $\eps$ to denote a choice of sign $\pm$ (when $K=\Q$) or be an empty condition (when $K = F$ imaginary quadratic).

\subsubsection{The Eichler--Shimura isomorphism}
By composing \cite[Prop.~3.1]{Hid94} with the projector $\mathrm{pr}^\eps$ (\S8 \emph{op.\ cit.}), as $\bH_N$-modules we have the \emph{Eichler--Shimura isomorphism}
\begin{equation}\label{eq:ES}
\h_{\mathrm{cusp}}^1(\Y,\sV_\lambda^\vee(\C))^\eps \cong S_\lambda(\Gamma_0(N)),
\end{equation}
 where $\eps$ is either $\pm$ (for $K=\Q$) or nothing ($K$ imaginary quadratic). As a corollary, we see:

\begin{proposition}\label{prop:dims} Suppose $f_{\mathrm{new}}$ is $p$-irregular. Then:
	\begin{enumerate}
		\item If $K=\Q$, for each choice of sign $\pm$, 
		\[
			\mathrm{dim}_{\C} \ \hc{1}(\Y,\sV_\lambda^\vee(\C))^\pm_{f} = 2, \hspace{12pt}\text{and}\hspace{12pt} \mathrm{dim}_{\C} \  \hc{1}(\Y,\sV_\lambda^\vee(\C))^\pm[f] = 1.
		\]
		\item If $K = F$ is imaginary quadratic, then
				\[
		\mathrm{dim}_{\C} \ \hc{1}(\Y,\sV_\lambda^\vee(\C))_{f} = 4, \hspace{12pt}\text{and}\hspace{12pt} \mathrm{dim}_{\C} \  \hc{1}(\Y,\sV_\lambda^\vee(\C))[f] = 1.
		\]
	\end{enumerate}
\end{proposition}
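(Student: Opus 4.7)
The plan is to reduce both dimension counts to Proposition \ref{prop:dims1} via the Eichler–Shimura isomorphism \eqref{eq:ES}. The only substantive step is showing that the compactly supported cohomology, after localisation at the cuspidal maximal ideal $\m_f$, coincides with the cuspidal cohomology; once this is established, both parts of the proposition follow by a dimension transfer from Proposition \ref{prop:dims1}.

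Concretely, I would use the long exact sequence arising from the Borel–Serre boundary $\partial\Y$, namely
\[
\h^0(\partial\Y,\sV_\lambda^\vee(\C)) \longrightarrow \hc{1}(\Y,\sV_\lambda^\vee(\C)) \longrightarrow \h^1(\Y,\sV_\lambda^\vee(\C)) \longrightarrow \h^1(\partial\Y,\sV_\lambda^\vee(\C)),
\]
and show that both boundary groups vanish after localising at $\m_f$. Hecke eigensystems appearing in the cohomology of $\partial\Y$ are Eisenstein: they come from algebraic Hecke characters on Levi subgroups of the cusps, and so the $T_v$-eigenvalue of any class in the boundary grows like $1+N(v)^{k+1}$ (up to a shift), violating the Ramanujan–Deligne bound satisfied by the cuspidal system of $f_{\mathrm{new}}$. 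Hence $\m_f$ is not in the support of the boundary cohomology, and after localisation the map $\hc{1}(\Y,\sV_\lambda^\vee(\C))_f \to \h^1(\Y,\sV_\lambda^\vee(\C))_f$ becomes an isomorphism onto the image, which is the interior cohomology. Over $\C$, interior cohomology agrees with cuspidal cohomology, giving
\[
\hc{1}(\Y,\sV_\lambda^\vee(\C))_f \cong \h^1_{\mathrm{cusp}}(\Y,\sV_\lambda^\vee(\C))_f.
\]

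To conclude, when $K=\Q$ the involution $\iota$ commutes with all operators in $\bH_N$, so the above isomorphism respects the $\pm$-decomposition. Feeding each sign component into \eqref{eq:ES} yields
\[
\hc{1}(\Y,\sV_\lambda^\vee(\C))^\eps_f \cong \h^1_{\mathrm{cusp}}(\Y,\sV_\lambda^\vee(\C))^\eps_f \cong S_\lambda(\Gamma_0(N))_f,
\]
and the analogous statement for the actual $f$-eigenspaces $[f]$. Proposition \ref{prop:dims1} then gives dimensions $2$ and $1$ in case (1), and dimensions $4$ and $1$ in case (2), as required.

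The main obstacle is the vanishing of the boundary cohomology at $\m_f$, especially in the Bianchi setting where $\h^1(\partial\Y,\sV_\lambda^\vee(\C))$ can be fairly large and is described via parabolic induction from the Borel subgroups attached to the cusps; one has to verify that every Hecke system occurring there is Eisenstein in the sense above, so that it cannot coincide with the cuspidal system of $f_{\mathrm{new}}$. Once that is in hand, the rest is a direct translation of Proposition \ref{prop:dims1} through \eqref{eq:ES}, with no new computation needed.
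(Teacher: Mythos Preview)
Your proposal is correct and follows essentially the same approach as the paper: both reduce to Proposition~\ref{prop:dims1} via the Eichler--Shimura isomorphism after arguing that the compactly supported and cuspidal cohomologies agree upon localisation at the cuspidal system $\m_f$. The paper cites \cite[Lem.~3.15, p.~123]{Clo90} for the injection $\h^1_{\mathrm{cusp}} \hookrightarrow \hc{1}$ with Eisenstein cokernel, whereas you obtain the same conclusion via the Borel--Serre boundary sequence and the Ramanujan bound; the content is the same.
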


\begin{proof}
	Using \cite[Lem.\ 3.15]{Clo90} and Strong Multiplicity One for $\GL_2$, there is a Hecke-equivariant isomorphism between $S_{\lambda}(\Gamma_0(N))_{f}$ (interpreted via the automorphic representation generated by $f$) and the cuspidal cohomology $\h^1_{\mathrm{cusp}}(Y_N,\sV_\lambda^\vee(\C))_{f}^\eps$. Moreover, the cuspidal cohomology injects into the compactly supported cohomology (e.g. \cite[p.\ 123]{Clo90}) with Eisenstein cokernel, so from \eqref{eq:ES} we deduce that it suffices to prove the analogous results for $S_{\lambda}(\Gamma_0(N))_{f}$.	The result then follows from Proposition \ref{prop:dims1}.
\end{proof}

\begin{definition}
For any $g \in S_\lambda(\Gamma_0(N))$, let $\phi_{g,\C}^\eps \in \hc{1}(Y_N,\sV_{\lambda}^\vee(\C))^\eps$ denote the attached (complex) cohomology class under the Eichler--Shimura isomorphism \eqref{eq:ES} and the inclusion $\h^1_{\mathrm{cusp}} \subset \h^1_{\mathrm{c}}$.
\end{definition}

\begin{corollary}
We have
\[
	\hc{1}(\Y,\sV_\lambda^\vee(\C))^\eps_{f} = \C[\bH_N] \cdot \phi_{f_{\mathrm{new}}^N,\C}^\eps\hspace{15pt}\text{and} \hspace{15pt} \hc{1}(\Y,\sV_\lambda^\vee(\C))^\eps[f] = \C \cdot \phi_{f,\C}^\eps.
\] 
\end{corollary}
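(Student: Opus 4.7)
The plan is to transport the statements from Corollary \ref{lem:Up - alpha} (which live in the space of cusp forms) to cohomology via the Eichler--Shimura isomorphism, exactly as was done in the proof of Proposition \ref{prop:dims}.

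First I would use that the Eichler--Shimura isomorphism \eqref{eq:ES}, composed with the projector $\mathrm{pr}^\eps$, is $\bH_N$-equivariant, and that it sends a cusp form $g \in S_\lambda(\Gamma_0(N))$ to its attached cuspidal cohomology class; by the definition of $\phi_{g,\C}^\eps$ and the fact that the cuspidal cohomology includes into the compactly supported cohomology, this is precisely $\phi_{g,\C}^\eps$. Localising at the maximal ideal $\m_f$ and using that the cokernel of $\h^1_{\mathrm{cusp}} \hookrightarrow \hc{1}$ is Eisenstein (and therefore vanishes after localisation at the cuspidal system $\m_f$, as already exploited in the proof of Proposition \ref{prop:dims}), we obtain an $\bH_N$-equivariant isomorphism
\[
    S_\lambda(\Gamma_0(N))_{f} \;\xrightarrow{\sim}\; \hc{1}(\Y,\sV_\lambda^\vee(\C))^\eps_{f}, \qquad g \longmapsto \phi_{g,\C}^\eps,
\]
which restricts to an isomorphism on the honest eigenspaces $[f]$.

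Once this is set up, the corollary is an immediate transport of structure. By Corollary \ref{lem:Up - alpha}, the left-hand side of the displayed isomorphism is generated over $\C[\bH_N]$ by $f_{\mathrm{new}}^N$; applying the isomorphism gives that $\hc{1}(\Y,\sV_\lambda^\vee(\C))^\eps_{f}$ is generated over $\C[\bH_N]$ by $\phi_{f_{\mathrm{new}}^N,\C}^\eps$. Similarly, Proposition \ref{prop:dims1} identifies the eigenspace $S_\lambda(\Gamma_0(N))[f]$ with $\C f$, and hence $\hc{1}(\Y,\sV_\lambda^\vee(\C))^\eps[f] = \C \cdot \phi_{f,\C}^\eps$.

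There is no real obstacle here: the content was already absorbed into Corollary \ref{lem:Up - alpha} and into the Eichler--Shimura / Eisenstein-cokernel argument behind Proposition \ref{prop:dims}. The only mild point to be careful about is that $\phi_{g,\C}^\eps$ was defined via the composition of Eichler--Shimura with the inclusion into compactly supported cohomology, so one should check that the $\m_f$-localisation commutes with this inclusion --- which it does, precisely because the cokernel is Eisenstein and thus killed by localisation at a cuspidal maximal ideal.
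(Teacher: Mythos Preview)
Your proposal is correct and follows essentially the same approach as the paper's proof, which simply cites Propositions \ref{prop:dims1} and \ref{prop:dims}, Corollary \ref{lem:Up - alpha}, and the Hecke-equivariance of the Eichler--Shimura isomorphism. Your treatment is slightly more explicit about the Eisenstein-cokernel step ensuring that localisation at $\m_f$ commutes with the inclusion $\h^1_{\mathrm{cusp}} \hookrightarrow \hc{1}$, but this is already absorbed into the proof of Proposition \ref{prop:dims}.
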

\begin{proof}
	Immediate from Propositions \ref{prop:dims1} and \ref{prop:dims}, Lemma \ref{lem:Up - alpha} and Hecke-equivariance of \eqref{eq:ES}.
\end{proof}

\subsubsection{Periods and algebraic cohomology classes}
We now define cohomology classes attached to $f_{\mathrm{new}}$ and $f$ with algebraic coefficients. We define these periods using $f_{\mathrm{new}}$ at level $M$, and then $p$-stabilise to show that the entire generalised eigenspace at level $N$ can be made algebraic by scaling by the same periods.

Recall $N =Mp$, and let $U_0(M)$ and $Y_M$ be the direct analogues of $U_0(N)$ and $Y_N$ at level $M$. Using Eichler--Shimura at level $M$, the newform $f_{\mathrm{new}}$ determines a canonical class
\[
\phi_{f_{\mathrm{new}},\C}^\eps \in \hc{1}(Y_M,\sV_{\lambda}^\vee(\C))_f^\eps.
\]

\begin{proposition}
Let $E = \Q(f_{\mathrm{new}})$ be the Hecke field of $f_{\mathrm{new}}$. There exists a period $\Omega_f^\eps$ with
\[
	\phi_{f_{\mathrm{new}}}^\eps \defeq \frac{\phi_{f_{\mathrm{new}},\C}^\eps}{\Omega_f^\eps} \in \hc{1}(Y_M,\sV_\lambda^\vee(E))^\eps_f.
\]
\end{proposition}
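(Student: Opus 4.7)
The plan is to show that $\hc{1}(Y_M, \sV_\lambda^\vee(E))^\eps_f$ is already one-dimensional over $E$, so that the complex class $\phi_{f_{\mathrm{new}},\C}^\eps$ --- which generates the corresponding one-dimensional $\C$-space by the Eichler--Shimura isomorphism --- automatically descends to $E$ after rescaling by a single period.

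First I would establish the one-dimensionality of $\hc{1}(Y_M, \sV_\lambda^\vee(\C))^\eps_f$. At level $M$, Strong Multiplicity One yields $S_\lambda(\Gamma_0(M))_{f_{\mathrm{new}}} = \C \cdot f_{\mathrm{new}}$: unlike at level $N$, the $p$-stabilisation subtlety from Proposition \ref{prop:dims1} does not arise, since we have not yet added $p$ to the level and the full Hecke algebra $\bH_M$ acts semisimply on this line. Combined with the (Bianchi--)Eichler--Shimura isomorphism \eqref{eq:ES} applied at level $M$, this gives $\hc{1}(Y_M, \sV_\lambda^\vee(\C))^\eps_f = \C \cdot \phi_{f_{\mathrm{new}},\C}^\eps$. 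In the imaginary quadratic case this step relies on the non-CM assumption on the original classical form underlying $f_{\mathrm{new}}$.

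Next I would descend to $E$. Since $\bH_M$ acts $\Q$-rationally on $\hc{1}(Y_M, \sV_\lambda^\vee(\Q))^\eps$, flat base change gives a Hecke-equivariant $E$-structure $\hc{1}(Y_M, \sV_\lambda^\vee(E))^\eps = \hc{1}(Y_M, \sV_\lambda^\vee(\Q))^\eps \otimes_\Q E$; since the Hecke eigenvalues of $f$ lie in $E$, the ideal $\m_f \subset \bH_M \otimes E$ is well-defined. By flatness of $E \hookrightarrow \C$ and compatibility of localisation with tensor product,
\[
	\hc{1}(Y_M, \sV_\lambda^\vee(E))^\eps_f \otimes_E \C \ \cong\ \hc{1}(Y_M, \sV_\lambda^\vee(\C))^\eps_f,
\]
and the right-hand side is one-dimensional over $\C$ by the previous step. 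Hence the source is one-dimensional over $E$.

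To finish, I would pick any non-zero $\phi \in \hc{1}(Y_M, \sV_\lambda^\vee(E))^\eps_f$. Its image under the natural embedding $\hc{1}(Y_M, \sV_\lambda^\vee(E))^\eps \hookrightarrow \hc{1}(Y_M, \sV_\lambda^\vee(\C))^\eps$ is a non-zero element of the line $\C \cdot \phi_{f_{\mathrm{new}},\C}^\eps$, hence equals $\phi_{f_{\mathrm{new}},\C}^\eps/\Omega_f^\eps$ for a unique $\Omega_f^\eps \in \C^\times$; setting $\phi_{f_{\mathrm{new}}}^\eps := \phi$ then completes the proof, with both $\phi_{f_{\mathrm{new}}}^\eps$ and $\Omega_f^\eps$ determined only up to $E^\times$-scaling. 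The argument is essentially formal once the level-$M$ eigenspace is known to be simple; no substantial obstacle is expected, the only substantive input being Strong Multiplicity One at level $M$ (and the Bianchi Eichler--Shimura--Harder theorem underlying \eqref{eq:ES} when $K = F$).
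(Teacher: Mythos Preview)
Your proposal is correct and follows essentially the same approach as the paper: both arguments reduce to the one-dimensionality of $\hc{1}(Y_M,\sV_\lambda^\vee(E))^\eps_{f_{\mathrm{new}}}$, then pick an $E$-rational generator and define $\Omega_f^\eps$ as the scalar relating it to $\phi_{f_{\mathrm{new}},\C}^\eps$. The only difference is cosmetic: the paper cites \cite[\S8]{Hid94} for the one-dimensionality over $E$, whereas you spell out the Strong Multiplicity One plus flat base-change argument explicitly.
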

\begin{proof}

Since $f_{\mathrm{new}}$ is a newform of level $M$, we have
\[
	\mathrm{dim}_E \hc{1}(Y_M,\sV_\lambda^\vee(E))^\varepsilon_{f_{\mathrm{new}}} = 1,
\]
defining an $E$-rational line inside the analogous line with $\C$-coefficients (see \cite[\S8]{Hid94}). Let $\phi_{f_{\mathrm{new}},E}^\varepsilon$ be a generator; this is of the form $(\Omega_f^\eps)^{-1} \cdot \phi_{f_{\mathrm{new}},\C}^\eps$ for some $\Omega_f^{\eps} \in \C^\times$. We then take $\phi_{f_{\mathrm{new}}}^\eps \defeq \phi_{f_{\mathrm{new}},E}^\eps$.
\end{proof}

We now transport this to level $N$. Recall $f_{\mathrm{new}}^N$ denotes the modular form in $S_\lambda(\Gamma_0(N))$ obtained simply by considering $f_{\mathrm{new}}(z)$ to have level $N$ rather than $M$, and that by Corollary \ref{lem:Up - alpha} this generates the generalised eigenspace at $f$ as a Hecke module. 
We also have a canonical class
\[
\phi_{f_{\mathrm{new}}^N,\C}^\eps \ \in \hc{1}(Y_N,\sV_\lambda^\vee(\C))^\eps_f
\]
given by Eichler--Shimura.
We have a natural quotient map $t: Y_N \to Y_M$; on cohomology, pullback under $t$ is equivariant with respect to all the Hecke operators away from $p$, and $t^*\phi_{f_{\mathrm{new}},\C}$ is precisely the image of $f_{\mathrm{new}}^N$ under the Eichler--Shimura isomorphism (at level $N$). With that in mind, we define the algebraic analogue:

\begin{definition}\label{def:phi alg}
Define
\[
\phi_{f^N_{\mathrm{new}}}^\varepsilon \defeq t^*\left(\frac{\phi_{f_{\mathrm{new}}^\eps,\C}}{\Omega_f^\varepsilon}\right) \in \hc{1}(Y_N,\sV_\lambda^\vee(E))^\varepsilon.
\]
\end{definition}
The following diagram and corollary summarise all of the above.

\[
\xymatrix@C=15mm@R=6mm{
f_{\mathrm{new}}\ar@{|->}@/_5pc/[ddd]  \ar@{|->}[r]\sar{d}{\in}&
\phi_{f_{\mathrm{new}},\C}^\eps \sar{d}{\in}&
\phi_{f_{\mathrm{new}}} \sar{d}{\in}\ar@{|->}@/^5pc/[ddd]  \ar@{|->}[l]   \\ 
S_\lambda(\Gamma_0(M)) \ar[d]^-{\mathrm{id}} \ar[r]^-{\mathrm{pr}^\eps \circ\text{E--S}} &
\hc{1}(Y_M,\sV_\lambda^\vee(\C))^\eps \ar[d]^-{t^*}  &
\hc{1}(Y_M,\sV_\lambda^\vee(E))^\eps_{f_{\mathrm{new}}} \ar[d]^-{t^*} \ar[l]^-{ \times \Omega_f^\eps}  \\
 S_\lambda(\Gamma_0(N)) \ar[r]^-{\mathrm{pr}^\eps \circ\text{E--S}} &
\hc{1}(Y_N,\sV_\lambda^\vee(\C))^\eps &
\hc{1}(Y_N,\sV_\lambda^\vee(E))^\eps_f  \ar[l]^-{ \times \Omega_f^\eps} \\
f_{\mathrm{new}}^N \sar{u}{\in} \ar@{|->}[r]  & 
\phi_{f^N_{\mathrm{new}},\C}^\eps \sar{u}{\in}  &
\phi_{f_{\mathrm{new}}^N}^\eps \sar{u}{\in} \ar@{|->}[l]
}
\]
\vspace{15pt}

\begin{corollary}\label{cor:gen eigenspace E}
The entire (level $N$) generalised eigenspace at $f$ is defined over $E(\alpha_p)$, and
\[
\hc{1}(\Y,\sV_\lambda^\vee(E(\alpha_p)))^\eps_{f} = E(\alpha_p)[\bH_N] \cdot \phi_{f_{\mathrm{new}}^N}^\eps.
\] 
The one-dimensional $\bH_N$-eigenspace in $\hc{1}(Y_N,\sV_\lambda^\vee(E(\alpha_p)))^\eps[f]$ is generated by
	\[
	\phi_f^\varepsilon \defeq \left\{\begin{array}{ll} \alpha_p^{-1}(U_p-\alpha_p)\phi_{f_{\mathrm{new}}^N}^\varepsilon &: K = \Q,\\
	\alpha_p^{-2}(U_{\pri} - \alpha_p)(U_{\pribar} - \alpha_p)\phi_{f_{\mathrm{new}}^N}^\varepsilon &: K\text{ imaginary quadratic}.\end{array}\right.
	\]
\end{corollary}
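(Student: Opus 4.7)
The plan is to transfer Corollary \ref{lem:Up - alpha} from the space of cusp forms to cohomology via the Hecke-equivariant commutative diagram preceding the statement, and combine this with the dimension counts in Proposition \ref{prop:dims}. The two cases admit parallel proofs, so I would give full details only for $K=\Q$; the Bianchi case is obtained by substituting $(U_{\pri}-\alpha_p)(U_{\pribar}-\alpha_p)$ for $(U_p-\alpha_p)$ and $\alpha_p^{-2}$ for $\alpha_p^{-1}$ throughout.

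To verify the formula for $\phi_f^\eps$, one reads off the diagram that $\phi_{f^N_{\mathrm{new}}}^\eps = \phi_{f^N_{\mathrm{new}},\C}^\eps/\Omega_f^\eps$, which corresponds under Eichler--Shimura at level $N$ to $f^N_{\mathrm{new}}/\Omega_f^\eps$. Applying $\alpha_p^{-1}(U_p-\alpha_p)$ and using Hecke-equivariance of Eichler--Shimura together with Corollary \ref{lem:Up - alpha}, this class is carried to $\alpha_p^{-1}(U_p-\alpha_p)f^N_{\mathrm{new}}/\Omega_f^\eps = f/\Omega_f^\eps$, a nonzero $\bH_N$-eigenform with the eigensystem of $f$. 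Hence $\phi_f^\eps$ is a nonzero class with the Hecke eigensystem of $f$; it lies in $E(\alpha_p)$-coefficients since the Hecke operators are defined integrally and $\alpha_p\in E(\alpha_p)$. By Proposition \ref{prop:dims} the $\C$-eigenspace $\hc{1}(Y_N,\sV_\lambda^\vee(\C))^\eps[f]$ is one-dimensional, so by flat base change the same holds over $E(\alpha_p)$, and $\phi_f^\eps$ generates this line.

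For the generation claim, I would set $V \defeq E(\alpha_p)[\bH_N]\cdot \phi_{f^N_{\mathrm{new}}}^\eps \subseteq \hc{1}(Y_N,\sV_\lambda^\vee(E(\alpha_p)))^\eps_f$ and base-change to $\C$. Since $\phi_{f^N_{\mathrm{new}}}^\eps$ and $\phi_{f^N_{\mathrm{new}},\C}^\eps$ differ by the scalar $\Omega_f^\eps$, $V\otimes_{E(\alpha_p)}\C = \C[\bH_N]\cdot \phi_{f^N_{\mathrm{new}},\C}^\eps$, which under Eichler--Shimura corresponds to $\C[\bH_N]\cdot f^N_{\mathrm{new}} = S_\lambda(\Gamma_0(N))_f$ by Corollary \ref{lem:Up - alpha}. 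Thus $\dim_{E(\alpha_p)} V = \dim_\C \hc{1}(Y_N,\sV_\lambda^\vee(\C))^\eps_f = 2$ by Proposition \ref{prop:dims}, and this forces $V$ to coincide with the ambient generalised eigenspace, which has the same $E(\alpha_p)$-dimension (again by flat base change).

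The proof is essentially mechanical once Corollary \ref{lem:Up - alpha} and Proposition \ref{prop:dims} are in hand; the only mildly subtle point is the descent of the generalised eigenspace from $\C$ to $E(\alpha_p)$, which one handles by noting that localisation at $\m_f$ commutes with flat base change.
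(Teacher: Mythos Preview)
Your proof is correct and follows essentially the same approach as the paper: the paper's proof is a one-liner invoking Hecke-equivariance of Eichler--Shimura, rationality of the Hecke operators, and Corollary~\ref{lem:Up - alpha}, and you have simply unpacked these ingredients in detail. Your use of flat base change to descend the dimension counts from $\C$ to $E(\alpha_p)$ is exactly what the paper means by ``rationality of Hecke operators'' together with ``adding $\alpha_p$ to the coefficient field''.
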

	
	\begin{proof}
		This is a formal consequence of the Hecke-equivariance of Eichler--Shimura, the rationality of Hecke operators, and Corollary \ref{lem:Up - alpha} (after adding $\alpha_p$ to the coefficient field).
	\end{proof}

After embedding $E(\alpha_p)$ into a sufficiently large finite extension $L/\Qp$, we henceforth always consider $\phi_f$ to have $p$-adic coefficients.

\subsubsection{Classical Hecke algebras}

 Eichler--Shimura descends to algebraic coefficients, i.e.\
\[
	S_\lambda(\Gamma_0(N),E(\alpha_p))_f \cong \hc{1}(Y_N,\sV_\lambda^\vee(E(\alpha_p)))^\eps_f.
\] 
This isomorphism is now non-canonical, depending on the choice of periods. We then pass to coefficients in $L$, obtaining a $p$-adic version 
\begin{equation}\label{eq:eichler-shimura L}
	S_\lambda(\Gamma_0(N),L)_f \cong \hc{1}(Y_N,\sV_\lambda^\vee(L))_f^\eps
\end{equation}
of Eichler--Shimura (see, for example, \cite[Prop.\ 3.18]{Bel12} for this $p$-adic version). From now on, we always work with coefficients in $L$, and thus suppress it from any further notation.

\begin{definition}\label{def:classical Hecke}
		Let $\bT_{\lambda,f}^\eps$ be the image of $\bH_N$ in $\mathrm{End}_{L}\hc{1}(\Y,\sV_\lambda^\vee)^\eps_{f}$.
\end{definition}

Hida duality, which remains true over $L$, is a perfect pairing between modular forms and the Hecke algebra sending $(g,T)$ to the leading Fourier coefficient of $Tg$. When composed with the Eichler--Shimura isomorphism we obtain a perfect pairing
\begin{equation}\label{eqn:perfect pairing}
	\hc{1}(\Y,\sV_\lambda^\vee)^\eps_{f} \times \bT_{\lambda,f}^\eps \longrightarrow L,
\end{equation}
and in particular $\bT_{\lambda,f}^\eps \cong [\hc{1}(\Y,\sV_\lambda^\vee)^{\eps}_{f}]^\vee$ (non-canonically, depending on the choice of periods). 

\begin{proposition}\label{prop:gorenstein}
	The Hecke algebra $\bT_{\lambda,f}^\eps$ is complete intersection (and hence Gorenstein).
\end{proposition}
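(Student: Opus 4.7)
The plan is to present $\bT_{\lambda,f}^\eps$ explicitly as a quotient of a polynomial ring by the squares of the variables. Such a ring is the quotient of a regular ring by a regular sequence, so is complete intersection by definition; and any Artinian complete intersection local ring is automatically Gorenstein, so this would finish both parts of the claim.

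First I would argue that $\bT_{\lambda,f}^\eps$ is generated as an $L$-algebra by the operators $U_\pri$ for $\pri | p$. Indeed, by Corollary \ref{cor:gen eigenspace E} the whole generalised eigenspace $\hc{1}(\Y,\sV_\lambda^\vee)^\eps_{f}$ is a module over $\bH_N$ generated by $\phi_{f_{\mathrm{new}}^N}^\eps$, which is an eigenvector for the tame Hecke algebra $\bH_N^{\mathrm{tame}}$; thus every tame Hecke operator $T_v$ acts on the generalised eigenspace by the scalar $a_v(f_{\mathrm{new}})$, and only the operators $U_\pri$ contribute non-scalar elements. Setting $X_\pri \defeq U_\pri - \alpha_p$, Corollary \ref{lem:Up - alpha} says exactly that $X_\pri^2 = 0$ on the generalised eigenspace, and since the $U_\pri$ commute with each other this yields a surjection
\[
\psi : L[\{X_\pri\}_{\pri | p}]/\big(X_\pri^2 : \pri | p\big) \twoheadrightarrow \bT_{\lambda,f}^\eps.
\]

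Next I would match $L$-dimensions on both sides. The source of $\psi$ has basis $\{\prod_\pri X_\pri^{e_\pri} : e_\pri \in \{0,1\}\}$, so has $L$-dimension $2$ when $K = \Q$ (one prime above $p$) and $4$ when $K$ is imaginary quadratic (two primes above $p$). On the target side, the perfect Hida-style pairing \eqref{eqn:perfect pairing} identifies $\bT_{\lambda,f}^\eps$ with the $L$-linear dual of $\hc{1}(\Y,\sV_\lambda^\vee)^\eps_f$, and the latter has dimension $2$ or $4$ respectively by Proposition \ref{prop:dims}. Hence $\psi$ is a surjection between finite-dimensional $L$-vector spaces of the same dimension, and so is an isomorphism.

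The main thing to verify in this strategy is the dimension match: the lower bound coming from the explicit relations $X_\pri^2 = 0$ must coincide with the upper bound coming from Hida duality. This is the only non-formal ingredient, but both bounds are supplied by the preceding results, so there is no real obstacle. Finally, since $L[\{X_\pri\}_{\pri | p}]$ is regular and $(X_\pri^2)_{\pri | p}$ is a regular sequence of length $r \in \{1,2\}$ in that ring, the target of $\psi$ is complete intersection; transporting along the isomorphism, so is $\bT_{\lambda,f}^\eps$, completing the proof.
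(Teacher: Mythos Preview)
Your proposal is correct and follows essentially the same approach as the paper: both present $\bT_{\lambda,f}^\eps$ explicitly as $L[X]/(X^2)$ (resp.\ $L[X,Y]/(X^2,Y^2)$) via $X_\pri = U_\pri - \alpha_p$, using that the tame operators act as scalars and Corollary~\ref{lem:Up - alpha} for the relations $X_\pri^2=0$. The only cosmetic difference is in verifying that the surjection $\psi$ is an isomorphism: the paper (implicitly) uses the nonvanishing statements in Corollary~\ref{lem:Up - alpha}, e.g.\ $(U_\pri-\alpha_p)(U_{\pribar}-\alpha_p)f_{\mathrm{new}}^N = \alpha_p^2 f \neq 0$, to see directly that $\prod_\pri X_\pri \neq 0$ in $\bT_{\lambda,f}^\eps$, whereas you instead match dimensions via the Hida pairing~\eqref{eqn:perfect pairing} and Proposition~\ref{prop:dims}. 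Both routes are equally valid and equally short.
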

\begin{proof}
	First note that all the tame Hecke operators act via scalars in $L$; thus by Corollary \ref{lem:Up - alpha} in the rational case we have $\bT_{\lambda,f}^\eps \cong L[X]/(X^2)$, where $X$ is the image of $U_p - \alpha_p$, and in the Bianchi case, it is the tensor product $L[X,Y]/(X^2,Y^2)$, where $X$ (resp.\ $Y$) is the image of $U_{\pri} - \alpha_p$ (resp.\ $U_{\pribar} - \alpha_p$). In both cases the rings are complete intersection.
\end{proof}

\begin{remark}\label{rem:quotient}
If $\cM \subset \hc{1}(\Y, \sV_\lambda)^\eps_{f} \subset \hc{1}(\Y, \sV_\lambda)^\eps$ is a non-trivial Hecke-stable subspace, let $I_{\cM} \defeq \mathrm{Ann}_{\bT_{\lambda,f}^\eps}(\cM)$, and $\bT(\cM) \defeq \bT_{\lambda,f}^\eps/I_{\cM}$ be the corresponding quotient of $\bT_{\lambda,f}^\eps$. Then \eqref{eqn:perfect pairing} restricts to a perfect pairing $\cM \times \bT(\cM) \to L$. By definition, $\bT(\cM)$ is isomorphic to the image of $\bH_N$ in $\mathrm{End}_L(\cM)$.

An important case we consider is that when $\cM$ is the unique one-dimensional Hecke-stable subspace, namely the eigenspace at $f$. Then $\bT(\cM)$ is the unique one-dimensional Hecke-stable quotient of $\bT_{\lambda,f}^\eps$ (that is, $I_{\cM} = \m_f$ is the maximal ideal at $f$).
\end{remark}

\section{The $p$-adic $L$-function of a single form}\label{sec:oc cohomology}
We recap the theory of overconvergent cohomology and its utility in attaching $p$-adic $L$-functions to modular forms. In the classical setting, this is a cohomological analogue of overconvergent modular forms. We are terse with the details; all of this material is explained in greater detail in \cite{PS11} (for the rational case) and \cite{Wil17} (the Bianchi case). For continuity in later sections, we maintain the notation of \cite{BW18} everywhere. Recall $\cO_p = \cO_K\otimes_{\Z}\Zp$.

\subsection{Overconvergent coefficients}
For $\Qp \subset L \subset \overline{\Q}_p$, let $\cA(\cO_p,L)$ be the module of locally analytic functions on $\cO_p$ with values in $L$. For each weight $\lambda$, this admits a natural left-action of the semigroup
\[
\Sigma_0(p) \defeq \left\{\smallmatrd{a}{b}{c}{d} \in M_2(\cO_p)\ :\ p|c, \ a \in \cO_p^\times, \ ad-bc \neq 0\right\}
\]
via
\[
\smallmatrd{a}{b}{c}{d} \cdot g(z) = \lambda(a+cz)g\left(\tfrac{b+dz}{a+cz}\right),
\]
where we consider $\lambda : \cO_p^\times \to \Qp^\times$ as a character in the usual way. When considering this space with this action, we denote it $\cA_\lambda$. We let $\cD(\cO_p,L)$ and $\cD_\lambda$ be the topological dual spaces (noting that $\cD_\lambda$ inherits a dual right-action). This induces an action of $U_0(N)$ via projection to the components at $p$, and we obtain a local system $\sD_\lambda$ on the cohomology (e.g.\ \cite[(I.1)]{BW20}). Dualising the natural inclusion $V_\lambda \subset \cA_\lambda$ as $\Sigma_0(p)$-modules gives rise to a Hecke-equivariant specialisation map
\[
\rho_\lambda : \hc{1}(\Y,\sD_\lambda) \longrightarrow \hc{1}(\Y,\sV_\lambda^\vee).
\]
This is equivariant for the involution $\iota$ at infinity.
\begin{theorem}\label{thm:control}
Suppose $v_p(\alpha_{\pri}) < k+1$ for every $\pri|p$. Then for every $i$, the restriction of $\rho_\lambda $ to the generalised eigenspaces at $f$ is an isomorphism $\rho_\lambda: \hc{i}(\Y,\sD_\lambda)_{f} \isorightarrow \hc{i}(\Y,\sV_\lambda^\vee)_{f}.$
\end{theorem}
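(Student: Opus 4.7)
The natural approach is to dualise the inclusion $V_\lambda \hookrightarrow \cA_\lambda$ of $\Sigma_0(p)$-modules, fitting the specialisation map into a short exact sequence
\[
0 \longrightarrow \cN_\lambda \longrightarrow \cD_\lambda \longrightarrow V_\lambda^\vee \longrightarrow 0,
\]
where $\cN_\lambda$ is defined as the kernel, and this sequence is $\Sigma_0(p)$-equivariant (hence induces a sequence of local systems on $\Y$). Taking compactly supported cohomology produces a long exact sequence
\[
\cdots \to \hc{i}(\Y,\sN_\lambda) \to \hc{i}(\Y,\sD_\lambda) \xrightarrow{\ \rho_\lambda\ } \hc{i}(\Y,\sV_\lambda^\vee) \to \hc{i+1}(\Y,\sN_\lambda) \to \cdots,
\]
equivariant for the action of $\bH_N$ and for the involution $\iota$ when $K = \Q$. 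It will therefore suffice to prove that $\hc{i}(\Y,\sN_\lambda)_f = 0$ for every $i$; the theorem then follows by taking $f$-generalised eigenspaces, a functor which is exact on finite-slope subspaces since it amounts to localising at the maximal ideal $\mathfrak{m}_f$ of the relevant Hecke algebra.

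The vanishing of $\hc{i}(\Y,\sN_\lambda)_f$ should be read off from a slope decomposition of overconvergent cohomology with respect to the $U_\pri$ operators (for each $\pri \mid p$). The key computation is that $U_\pri$ acts on $\cN_\lambda$ with slope at least $k+1$: concretely, using the description of $\cN_\lambda$ as the distributions on $\cO_p$ that annihilate the subspace $V_\lambda \subset \cA_\lambda$ of polynomials of weight $\lambda$, one computes directly from the formula for the $U_\pri$-action on $\cD_\lambda$ that $U_\pri$ on $\cN_\lambda$ factors through multiplication by $\varpi_\pri^{k+1}$. This slope bound propagates to $\hc{*}(\Y,\sN_\lambda)$ via the usual finite-slope machinery for completely continuous operators on orthonormalisable Banach modules.

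Combined with the small slope hypothesis $v_p(\alpha_\pri) < k+1$, this means the system of Hecke eigenvalues of $f$ cannot occur in $\hc{*}(\Y,\sN_\lambda)$: any generalised eigenclass there would have $U_\pri$-slope at least $k+1$, contradicting the slope of $\alpha_\pri$. Hence $\hc{i}(\Y,\sN_\lambda)_f = 0$ for all $i$, and the long exact sequence above gives the desired isomorphism $\rho_\lambda : \hc{i}(\Y,\sD_\lambda)_f \xrightarrow{\sim} \hc{i}(\Y,\sV_\lambda^\vee)_f$ on generalised eigenspaces.

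The main obstacle is the slope bound itself, which needs to be established uniformly in the cohomological degree $i$; in the rational case this is Stevens' control theorem (see \cite[Thm.~5.4]{PS11}), while in the Bianchi case the analogous statement, valid simultaneously in degrees $1$ and $2$, is \cite[Thm.~4.6]{Wil17}. In practice I would quote these inputs verbatim and then argue, as above, that the natural statement about individual small-slope eigenclasses upgrades to the statement about full generalised eigenspaces using exactness of localisation on the finite-slope part.
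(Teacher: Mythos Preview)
The paper does not actually prove this theorem: it simply cites \cite[Thm.~9.7]{BW_CJM} for the statement in all degrees $i$, noting that the case $i=1$ was first established in \cite{Ste94} (rational) and \cite{Wil17} (Bianchi). Your sketch is the standard argument underlying those references, so in spirit you and the paper are doing the same thing, with you supplying more of the mechanism.

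One point deserves correction. In the Bianchi case your claim that ``$U_\pri$ on $\mathcal{N}_\lambda$ factors through multiplication by $\varpi_\pri^{k+1}$'' is not true as stated. Since $\cO_p \cong \cO_\pri \times \cO_{\pribar}$, the kernel $\mathcal{N}_\lambda$ contains, for instance, distributions supported on monomials $z_\pri^m z_{\pribar}^n$ with $m \leq k$ but $n > k$, and on these $U_\pri$ has slope $m \leq k$. The correct argument filters $\mathcal{N}_\lambda$ in two steps, using $\cD_\lambda \cong \cD(\cO_\pri)\,\widehat{\otimes}\,\cD(\cO_{\pribar})$, so that one graded piece has $U_\pri$-slope $\geq k+1$ and the other has $U_{\pribar}$-slope $\geq k+1$; the hypothesis on \emph{both} $\alpha_\pri$ and $\alpha_{\pribar}$ is then needed to kill the $f$-localisation of each piece separately. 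This is precisely what is carried out in the cited references. Note also that, according to the paper, \cite{Wil17} treats only $i=1$; for the statement in all cohomological degrees you should point to \cite{BW_CJM}, as the paper does.
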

In this generality, this is proved in \cite[Thm.~9.7]{BW_CJM}; though for $i = 1$, this was first proved in \cite{Ste94} (in the elliptic case) and \cite{Wil17} (in the Bianchi case).

We say forms $f$ satisfying this valuation condition have \emph{small} (or \emph{non-critical}) \emph{slope}.

\subsection{The $p$-adic $L$-function} \label{sec:p-adic L-function} 
The $p$-adic $L$-function of $f$ is naturally a locally analytic distribution on the narrow ray class group
\[
\cl_K^+(p^\infty) \defeq K^\times \backslash \A_K^\times/\widehat{\cO}_K^{(p),\times}K_\infty^+,
\]
where the superscript $(p)$ means away from $p$ and $K_\infty^+$ is the connected component of the identity in $(K\otimes\R)^\times$. By class field theory we identify these with distributions on the Galois group $\Gp$ of the maximal abelian extension of $K$ unramified outside $p\infty$, which is isomorphic to $\Zp^\times$ for $K=\Q$ and is a 2-dimensional $p$-adic analytic group for $K$ imaginary quadratic (corresponding to cyclotomic and anticyclotomic directions). For an $\cO_p$-algebra $R$, denote the $R$-valued locally analytic distributions on $\Gp$ by $\cD(\Gp,R)$.

The \emph{Mellin transform} is a map
\[
\mathrm{Mel}_\lambda : \hc{1}(\Y,\sD_\lambda(L)) \longrightarrow \cD(\Gp,L)
\]
described in \cite{PS11} and \cite{Wil17} (see also \cite[\S2.4]{BW18} for this language). It uses the identification of $\hc{1}$ with modular symbols and then formalises evaluation at $\{0\to\infty\}$.

Let $\phi_f^\eps \in \hc{1}(\Y,\sV_\lambda^\vee(L))^\eps[f]$ be the class constructed in Corollary \ref{cor:gen eigenspace E}. We assume $v_p(\alpha_{\pri}) < k+1$ for each $\pri|p$; if $f_{\mathrm{new}}$ is $p$-irregular, this is always satisfied since $v_p(\alpha_{\pri}) = (k+1)/2$. Then Theorem \ref{thm:control} means we can lift $\phi_f^\eps$ to a unique $\Phi_f^\eps \in \hc{1}(\Y,\sD_\lambda)^\eps[f]$. 

\begin{definition}
	If $K=\Q$, let $L_p^\pm(f) = \mathrm{Mel}_\lambda(\Phi_f^\pm)$, and let $\Phi_f \defeq \Phi_f^+ + \Phi_f^- \in \hc{1}(\Y,\sD_\lambda)$. Let 
	\[
	L_p(f) \defeq \mathrm{Mel}_\lambda(\Phi_f) \in \cD(\Gp,L).
	\]
\end{definition} 

The main results of \cite{PS11} (for $\Q$) and \cite{Wil17} (imaginary quadratic) were the following $p$-adic interpolation results for critical $L$-values; they appear as Proposition 6.5 and Theorem 7.4  respectively, where the precise interpolation factor is described. (See also equation \eqref{eqn:interpolation intro}).

\begin{theorem} $L_p(f)$ is the $p$-adic $L$-function of $f$; that is, for any Hecke character $\varphi$ of $K$ of $p$-power conductor and infinity type $0 \leq j \leq k$ (rational case) or $0 \leq (j_1,j_2) \leq (k,k)$ (Bianchi case), 
	\[
	L_p(f,\varphi) \defeq \int_{\Gp} \varphi_{p-\mathrm{fin}}(x) \cdot dL_p(f) = C(f,\varphi)\cdot \frac{\Lambda(f,\varphi)}{\Omega_f^\eps},
	\]
	where $C(f,\varphi)$ is an explicit factor, $\varphi_{p-\mathrm{fin}}$ is the $p$-adic avatar of $\varphi$, and $\eps$ depends on $\varphi$. Moreover, $L_p(f)$ satisfies a growth property making it unique with this interpolation property.
\end{theorem}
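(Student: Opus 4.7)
The plan is to reduce the interpolation and growth properties to the analogous results for single forms in \cite{PS11} and \cite{Wil17}, essentially by unwinding the construction of $L_p(f)$. First, I would recall that $\mathrm{Mel}_\lambda$ is constructed by passing from $\hc{1}(\Y,\sD_\lambda)$ to overconvergent modular symbols, then evaluating at the divisor $\{0\to\infty\}$ to obtain a locally analytic distribution on $K^\times\backslash \A_K^\times/\widehat{\cO}_K^{(p),\times}K_\infty^+$. Under the class field theory identification with $\Gp$, testing $L_p(f)$ against $\varphi_{p-\mathrm{fin}}$ corresponds to pairing the overconvergent class $\Phi_f$ with a specific locally analytic function on $\cO_p$ built from $\varphi$.

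The next step is to use the specialisation map $\rho_\lambda$. Since $\varphi_{p-\mathrm{fin}}$ has infinity type in the critical range $0\leq j\leq k$ (or $0\leq (j_1,j_2)\leq (k,k)$), its restriction to a $p$-adically open set lies inside the algebraic representation $V_\lambda$, so pairing $\Phi_f$ against it factors through $\rho_\lambda(\Phi_f)=\phi_f^\eps$. This is where Theorem \ref{thm:control} enters: non-criticality ($v_p(\alpha_\pri)=(k+1)/2<k+1$, automatic for irregular $f$) guarantees that $\Phi_f$ is the unique overconvergent lift, so the computation reduces to evaluating the classical cohomology class $\phi_f^\eps$ against a polynomial twist. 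This classical evaluation is exactly the computation carried out in \cite[Prop.~6.5]{PS11} (rational case) and \cite[Thm.~7.4]{Wil17} (Bianchi case), and yields $\Lambda(f,\varphi)/\Omega_f^\eps$ multiplied by the explicit factor $C(f,\varphi)$ coming from Gauss sums, powers of $\alpha_\pri^{-1}$, and the local interpolation factors at $p$.

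The sign/branch $\eps$ depending on $\varphi$ arises because, in the rational case, the character $\varphi_{p-\mathrm{fin}}$ decomposes according to parity and one uses the corresponding projector $\mathrm{pr}^\eps=(1\pm\iota)/2$; in the Bianchi case $\eps$ is empty, as discussed in \S\ref{sec:notation}. For uniqueness via the growth condition, the relevant fact is that $\Phi_f$ lies in $\hc{1}(\Y,\sD_\lambda)$ with $\sD_\lambda$ the locally analytic distributions; consequently, as established in \cite[\S6]{PS11} and \cite[\S7]{Wil17}, $L_p(f)$ is $h$-admissible for $h=v_p(\alpha_{\pri})<k+1$, and an admissible distribution on $\Gp$ is determined by its interpolation values at characters of the above infinity types by Visik-Amice-Vélu style arguments.

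The main obstacle, and the reason this is generally treated as a recap rather than re-proven, is the somewhat painful unwinding of the explicit interpolation factor $C(f,\varphi)$: tracking Gauss sums, local zeta integrals at $p$, and the precise normalisation of $\{0\to\infty\}$ against $\varphi_{p-\mathrm{fin}}$. In the Bianchi case this is compounded by keeping track of both primes $\pri,\pribar|p$ and the two infinity-type components $(j_1,j_2)$. Since everything in the construction of $L_p(f)$ is purely formal from $\Phi_f$, however, the $p$-irregularity of $f$ plays no role in this theorem beyond ensuring that non-criticality holds; the theorem follows formally from the cited references once the specific choice of $\Phi_f$ constructed via Corollary \ref{cor:gen eigenspace E} and Theorem \ref{thm:control} is fed into their machinery.
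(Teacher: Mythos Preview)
Your proposal is correct and aligns with the paper's treatment: the paper does not give a proof of this theorem at all, but simply states it as a direct citation of \cite[Prop.~6.5]{PS11} (rational case) and \cite[Thm.~7.4]{Wil17} (Bianchi case), exactly the references you invoke. Your sketch of how the argument unfolds (factoring through $\rho_\lambda$, reducing to the classical evaluation of $\phi_f^\eps$, admissibility from the slope bound) is an accurate summary of what happens in those references, and your observation that $p$-irregularity plays no role here beyond guaranteeing non-critical slope is also correct.
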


\begin{remark}
	If $K = \Q$, then $\varphi = \chi|\cdot|^j$ for $\chi$ a Dirichlet character of conductor $p^r$, and if $r > 1$
	\[
	C(f,\chi|\cdot|^j) = \frac{p^{r(j+1)}}{\alpha_p^r\tau(\chi^{-1})},
	\]
	where $\tau(\chi^{-1})$ is a Gauss sum. When $K = F$ is imaginary quadratic, the general formula is significantly more complicated, involving lots of extra notation that we will not need elsewhere and do not wish to define here: the full constant is given in \cite[Thm.\ 7.4]{Wil17}. In the special case where $\varphi$ is of the form $(\chi|\cdot|^j)\circ N_{F/\Q}$ for $\chi$ as above, which we require later, we have
	\[C(f,(\chi|\cdot|^j)\circ N_{F/\Q}) = 
	\frac{d^{j+1}p^{2r(j+1)}\#\cO_F^\times}{(-1)^{k}2\alpha_{p}^{2r}\tau((\chi\circ N_{F/\Q})^{-1})}
	\]
	(see \cite[Prop.\ 7.8]{BW18}), where $-d$ is the discriminant of $F/\Q$.
\end{remark}

Note that in the case $K=\Q$, each of $L_p^\pm(f)$ interpolate a different set of critical $L$-values. The critical values are at characters $\chi|\cdot|^j$, with $0 \leq j \leq k$, and $L_p^\pm(f)$ interpolates the values with $\chi(-1)(-1)^j = \pm 1$. In particular, $L_p^+(f)$ is supported on the $+$-half of weight space, that is, the union of the $(p-1)/2$ closed balls corresponding to even characters of $\Zp^\times$, and $L_p^-$ is supported on the $-$-half of odd characters. Each of $L_p^\pm(f)$ is well-defined only up to scaling the period $\Omega_f^\pm$ in $L^\times$, and since their support is disjoint, each can be scaled independently without affecting the other.

\begin{remark}\label{rem:alternative}
	We end this section by describing an alternative construction more closely related to variation in families. Since it is a map of $L$-vector spaces, the restriction of the Mellin transform
	\[
	\cM^\eps_\lambda[f] \defeq \hc{1}(\Y,\sD_\lambda(L))^\eps[f] \xrightarrow{\ \mathrm{Mel}_\lambda\ } \cD(\Gp,L)
	\]
	can be viewed as an element $\mathrm{Mel}_{\lambda}^\eps[f] \in \cD(\Gp,L)\otimes_L \cM_\lambda^\eps[f]^\vee$. We know $\cM_\lambda^\eps[f]$ and hence $\cM_\lambda^\eps[f]^\vee$ are one-dimensional $L$-vector spaces, and choosing a basis $\Xi_\lambda^\eps[f]$ of $\cM_{\lambda}^\eps[f]^\vee$ gives an isomorphism
	\[
		\cD(\Gp,L)\otimes_L \cM_\lambda^\eps[f]^\vee \isorightarrow \cD(\Gp,L)\otimes_L L \cong \cD(\Gp,L).
	\]
	By construction the image of $\mathrm{Mel}_\lambda^\eps[f]$ under this is exactly the distribution $L_p^\eps(f)$ above (up to the same indeterminancy, as we can scale our initial eigenclasses by elements in $L^\times$).

	Finally we give a conceptual reformulation of this that will be useful later. Let $\cM_{\lambda,f}^\eps \defeq \hc{1}(Y_N,\sD_\lambda)_f^\eps$ be the full generalised eigenspace. Analogously to Remark \ref{rem:quotient}, let $\cM \subset \cM_{\lambda,f}^\eps$ be a Hecke-stable submodule, and $\bT(\cM)$ the corresponding Hecke algebra. The restriction $\mathrm{Mel}_\lambda|_{\cM} : \cM \to \cD(\Gp,L)$ defines a canonical element $\mathrm{Mel}_{\cM} \in \cD(\Gp,L) \otimes_L \cM^\vee$. Suppose that:
	\[
		\cM^\vee\text{ is free of rank one over }\bT(\cM).
	\]
	Choosing a generator $\Xi_{\cM} \in \cM^\vee$ over $\bT(\cM)$ yields an isomorphism $\cD(\Gp,L)\otimes_L\cM^\vee \isorightarrow \cD(\Gp,L)\otimes_L\bT(\cM)$; let $\cL_{\cM} \in \cD(\Gp,L)\otimes_L\bT(\cM)$ be the image of $\mathrm{Mel}_{\cM}$,  well-defined up to scaling by $\bT(\cM)^\times$. 

Now suppose the eigenspace $\cM_{\lambda}^\eps[f] \subset \cM_{\lambda,f}
^\eps$ is a line; then every Hecke-stable submodule contains $\cM_{\lambda}^\eps[f]$, annihilated by the maximal ideal $\m_f \subset \bT(\cM)$, and Hida duality says $\cM_\lambda^\eps[f]$ is dual to the quotient $\bT(\cM)/\m_f \cong L$.  We conclude that reduction mod $\m_f$ at the level of Hecke algebras corresponds dually to restriction to $\cM_\lambda^\eps[f]$. In particular, we see that there exists $c_f^\eps \in L^\times$ such that
	\[
		\Xi_{\cM}\big|_{\cM_\lambda^\eps[f]} = c_f^\eps \cdot \Xi_\lambda^\eps[f].
	\]
	This $c_f^\eps$ measures the difference in choice of generator. Using the description of $L_p^\eps(f)$ in the first part of the remark, under the map
	\[
	\mathrm{sp}_f : \cD(\Gp,L)\otimes_L \bT(\cM) \to \cD(\Gp,L) \otimes_L \bT(\cM)/\m_f \cong \cD(\Gp,L)\otimes_L \cM_\lambda^\eps[f]^\vee \cong \cD(\Gp,L),
	\]
	we have $\mathrm{sp}_f(\cL_{\cM}) = c_f^\eps L_p^\eps(f)$.
\end{remark}

\section{The eigencurves at irregular points}\label{sec:geometry at irregular points}

\subsection{Overconvergent cohomology in families}\label{sec:dists in families}
The distributions $\cD_\lambda$, and the corresponding overconvergent cohomology groups, can be varied in $p$-adic families as $\lambda$ varies. In particular, let $\cW = \mathrm{Spf}(\Zp \lsem \Zp^\times \rsem)^{\mathrm{an}}$ be the weight space for $\GLt$. This embeds diagonally as the parallel weight subspace of the Bianchi weight space if $K$ is imaginary quadratic. Any affinoid subdomain $\Sigma = \mathrm{Sp}(\OS) \subset \cW $ then has an associated locally analytic tautological character $\chi_\Sigma : \cO_p^\times \to \OS^\times$. For a more detailed exposition, see \cite[\S3.2]{Bel12} or \cite[\S3.1]{BW18}.

For $\Sigma $ as above, we define $\cA_{\Sigma} \defeq \cA(\cO_p,L) \widehat{\otimes}_L \OS$, with $\Sigma_0(p)$-action given by 
\[
\smallmatrd{a}{b}{c}{d} \cdot g(z) = \chi_\Sigma(a+cz)g\left(\tfrac{b+dz}{a+cz}\right),
\]
and let $\cD_\Sigma \defeq \mathrm{Hom}_{\mathrm{cts}}(\cA_\Sigma,\OS)$. For $\lambda \in \Sigma$, corresponding to a maximal ideal $\m_\lambda$, we thus have
\begin{equation}\label{eq:spec dist}
\cD_\Sigma \otimes_{\OS} \OS/\m_\lambda \cong \cD_\lambda.
\end{equation}
 The induced dual (right) action of $\Sigma_0(p)$  on $\cD_\Sigma$ yields a local system $\sD_\Sigma$ on $Y_N$. The resulting cohomology groups are infinitely generated. To make computing in them more manageable, we use the following (see, for example, \cite[Lem.\ 3.4.14]{Urb11} or \cite[\S2.3, Prop.~3.1.5]{Han17}):
\begin{proposition}
	The matrix $\smallmatrd{1}{0}{0}{p}$ acts compactly on $\cD_\Sigma$. Hence, for any $h \geq 0$ and $\lambda \in \cW$, there exists a neighbourhood $\Sigma= \mathrm{Sp}(\Lambda)$ such that the groups $\hc{i}(\Y,\sD_\Sigma)$ admit a slope $\leq h$ decomposition with respect to $U_p$.
\end{proposition}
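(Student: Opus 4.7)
The plan is to follow the Ash--Stevens/Urban/Hansen strategy for constructing slope decompositions in families of overconvergent cohomology. First I would address the compactness of $u \defeq \smallmatrd{1}{0}{0}{p}$ on $\cD_\Sigma$. Unwinding the action formula, $u$ acts on $g \in \cA_\Sigma$ by $g(z) \mapsto \chi_\Sigma(1)\cdot g(pz) = g(pz)$. I would then realise $\cA_\Sigma$ as a directed system of Banach $\OS$-modules $\cA_\Sigma[r]$ consisting of functions on $\cO_p$ analytic on closed disks of radius $p^{-r}$; each is orthonormalisable (via the Amice basis $\binom{z}{n}$), and the inclusion $\cA_\Sigma[r] \hookrightarrow \cA_\Sigma[r+1]$ is compact because a function analytic on a strictly larger radius is a uniform limit of polynomials. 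The substitution $z\mapsto pz$ sends $\cA_\Sigma[r]$ into $\cA_\Sigma[r-1]$ and hence factors through a compact inclusion; dualising yields the compactness of the induced endomorphism of $\cD_\Sigma$.

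For the cohomological consequence, I would compute $\hc{\bullet}(\Y,\sD_\Sigma)$ via a bounded complex $C^\bullet$ of $\OS$-Banach modules — for instance, using a Borel--Serre compactification together with a finite simplicial structure on the boundary, or alternatively the explicit Ash--Stevens complex coming from a contractible fundamental domain. Each $C^i$ is then a finite direct sum of copies of $\cD_\Sigma$ (with a twisted $\Sigma_0(p)$-action), and $U_p$ acts on $C^\bullet$ through the Hecke coset decomposition $U_0(N)\cdot u\cdot U_0(N) = \bigsqcup_j \gamma_j u$. Because each summand factors through $u$, the resulting endomorphism of $C^\bullet$ is itself compact.

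Finally, fix $h \geq 0$ and $\lambda \in \cW$. Specialising at $\m_\lambda$ yields a compact endomorphism of a bounded complex of orthonormalisable $L$-Banach modules, with Fredholm determinant $F_\lambda(T)$ having only finitely many roots of slope $\leq h$. By continuity of the Fredholm series over rigid weight space, the global Fredholm determinant $F(T) \in \OS\{\{T\}\}$ admits, after shrinking to a sufficiently small affinoid neighbourhood $\Sigma = \mathrm{Sp}(\OS)$ of $\lambda$, a factorisation $F(T) = Q(T)\cdot R(T)$ with $Q$ a distinguished polynomial of fixed degree whose roots have slope $\leq h$, and $R$ a unit on the disk of radius $p^{-h}$. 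This Riesz--Coleman factorisation produces a canonical projector that decomposes $C^\bullet$ term-by-term, and since the projector is $\OS$-linear it descends to a compatible slope $\leq h$ decomposition of the cohomology $\hc{i}(\Y,\sD_\Sigma)$.

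The main technical obstacle is the bookkeeping around the interaction of slope decomposition with both cohomology and specialisation. One needs that the slope $\leq h$ part of a complex of compact Banach modules is a perfect complex of finitely generated projective $\OS$-modules, and that its formation commutes with the specialisation $\cD_\Sigma \otimes_{\OS} \OS/\m_\lambda \cong \cD_\lambda$ via \eqref{eq:spec dist}. These compatibilities are essentially formal from the canonicity of the Riesz projector applied to compact operators on orthonormalisable Banach modules, but establishing the correct framework — a canonical bounded complex of projective Banach $\OS$-modules with a compact $U_p$-action computing $\hc{\bullet}(\Y,\sD_\Sigma)$ — is the genuine work, carried out in \cite[Lem.~3.4.14]{Urb11} and \cite[\S2.3, Prop.~3.1.5]{Han17}, which I would invoke directly.
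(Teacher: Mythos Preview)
Your proposal is correct and follows the same approach as the paper, which in fact gives no proof at all and simply cites \cite[Lem.~3.4.14]{Urb11} and \cite[\S2.3, Prop.~3.1.5]{Han17} directly. You have unpacked what those references do --- the radius-of-analyticity filtration for compactness, a bounded Borel--Serre complex for the cohomology, and the Riesz--Coleman factorisation for the slope decomposition --- and ultimately invoke the same citations, so there is no substantive difference.
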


The small slope subspace $\hc{i}(\Y,\sD_\Sigma)^{\leq h}$ is finitely generated over $\OS$ \cite[Def.~2.3.1]{Han17}. 

 Recall from \S\ref{sec:notation} we have a maximal ideal $\m_f \subset \bH_{N} \otimes L$, the kernel of the map $\bH_N\to L$ sending each $T \in \bH_N$ to its eigenvalue on $f$. We get a maximal ideal of $\bH_N\otimes \Lambda$, also denoted $\m_f$, by pulling this back under $\mathrm{id} \otimes \newmod{\m_\lambda} : \bH_N\otimes \Lambda \twoheadrightarrow \bH_N \otimes L$. Let $\OSl$ be the localisation of $\OS$ at $\m_\lambda$, and $\hc{i}(\Y,\D_\Sigma)_f$ the localisation of $\hc{i}(\Y,\D_\Sigma)$ at $\m_f$.
For $h$ at least the slope of $f$ at $p$,  $\hc{i}(\Y,\D_\Sigma)_f$ is a finitely generated $\OSl$-submodule of $\hc{i}(\Y,\sD_\Sigma)^{\leq h}\otimes_{\OS}\OSl$. Thus we may freely use Nakayama's lemma when working with the slope $\leq h$ subspaces or after localisation at $f$.

\begin{notation}\label{not:simplify}
To simplify notation, let
\[
	\cM_{\lambda} \defeq \hc{1}(\Y,\sD_{\lambda}), \hspace{12pt} \cM_{\Sigma} \defeq \hc{1}(\Y,\sD_{\Sigma}).
\]
We use superscripts $\eps$, $\leq h$ to denote the $\eps$- and slope $\leq h$-parts respectively, write $\cM_{\lambda,f}$ (resp. $\cM_{\Sigma,f}$) for the localisations at $\m_f$, and $\cM_{\lambda}[f]$  for the actual $\bH_N$-eigenspace in $\cM_{\lambda,f}$.
\end{notation}

\subsection{Specialisation to single weights}
The specialisation $\cD_\Sigma \twoheadrightarrow \cD_\lambda$ of \eqref{eq:spec dist} induces a map $\mathrm{sp}_{\lambda}' : \cM_{\Sigma} \to \cM_{\lambda}$. To study this, define intermediate Hecke modules
\[
\overline{\cM}_{\Sigma}\defeq \cM_{\Sigma} \otimes_\Lambda \Lambda/\m_\lambda \hspace{12pt}\text{and}\hspace{12pt} \overline{\cM}_{\Sigma,f} \defeq \cM_{\Sigma,f} \otimes_{\Lambda_\lambda}\Lambda_\lambda/\m_\lambda.
\]
We will later use the following proposition, and its proof, to deduce that the classical or Bianchi eigencurve is Gorenstein at $f$, which will be crucial to our applications.

\begin{proposition}\label{prop:spec}
	Let $f$ be the $p$-stabilisation of a $p$-irregular newform of weight $\lambda$.
		Then $\mathrm{sp}_\lambda'$ induces a Hecke-equivariant injection $\mathrm{sp}_\lambda : \overline{\cM}_{\Sigma,f} \hookrightarrow \cM_{\lambda,f}$. Further:
		\begin{itemize}\setlength{\itemsep}{0pt}
			\item[(i)] If $K=\Q$, then $\mathrm{sp}_\lambda$ is an isomorphism.
			\item[(ii)] If $K$ is imaginary quadratic, then $\mathrm{dim}_L\overline{\cM}_{\Sigma,f}$ is either 2 or 4.
		\end{itemize}
\end{proposition}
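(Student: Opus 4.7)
The plan is to derive both the injectivity and the structure of the cokernel of $\mathrm{sp}_\lambda$ from a single long exact sequence in cohomology, and then to treat the two parts by controlling the relevant $\hc{2}$-term in each case. The specialisation $\cD_\Sigma \twoheadrightarrow \cD_\lambda$ sits in a short exact sequence of $\Lambda$-modules
\[
0 \longrightarrow \cD_\Sigma \xrightarrow{\ \cdot t_\lambda\ } \cD_\Sigma \longrightarrow \cD_\lambda \longrightarrow 0,
\]
where $t_\lambda \in \Lambda$ is a uniformiser of $\Lambda_\lambda$ and exactness on the left uses that $\cD_\Sigma$ is pro-free, hence torsion-free, over $\Lambda$. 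Taking compactly supported cohomology of $Y_N$ and extracting the relevant middle piece yields
\[
0 \longrightarrow \cM_\Sigma/t_\lambda \cM_\Sigma \longrightarrow \cM_\lambda \longrightarrow \hc{2}(Y_N, \sD_\Sigma)[t_\lambda] \longrightarrow 0,
\]
and localising at $\m_f$ (an exact and Hecke-equivariant operation) gives the desired injection $\mathrm{sp}_\lambda$.

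For part (i), when $K = \Q$ and $Y_N$ is an open modular curve, $\hc{2}(Y_N, \sD_\Sigma)$ is supported on Eisenstein Hecke eigensystems: by the Borel--Serre boundary long exact sequence it receives contributions only from the degree-$1$ cohomology at the cusps. Since $\m_f$ is cuspidal, $\hc{2}(Y_N, \sD_\Sigma)_f = 0$, so $\mathrm{sp}_\lambda$ is also surjective.

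For part (ii), when $K = F$ is imaginary quadratic and $Y_N$ is a Bianchi threefold, $\hc{2}(Y_N, \sD_\Sigma)_f$ need not vanish, so a finer argument is required. I would exploit the explicit structure from Proposition \ref{prop:gorenstein}: by Proposition \ref{prop:dims} together with Theorem \ref{thm:control} we have $\dim_L \cM_{\lambda, f} = 4$, and by Hida duality \eqref{eqn:perfect pairing} and Gorensteinness,
\[
\cM_{\lambda,f} \;\cong\; \bT_{\lambda,f} \;\cong\; L[X, Y]/(X^2, Y^2), \qquad X = U_\pri - \alpha_p,\ Y = U_\pribar - \alpha_p,
\]
so $\overline{\cM}_{\Sigma,f}$ corresponds to an ideal of this Artinian local ring. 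The key input I would establish is that this ideal is \emph{principal}: via Nakayama's lemma this reduces to showing $\cM_{\Sigma,f}$ is cyclic over $\bT_{\Sigma, f}$, which in turn reduces to one-dimensionality of the eigenspace of $\cM_{\Sigma, f}$ at the family eigensystem (verifiable by density of classical $p$-regular weights $\lambda' \in \Sigma$, at which the corresponding deformation $f_{\lambda'}$ has a one-dimensional generalised eigenspace). A direct enumeration shows the principal ideals of $L[X,Y]/(X^2, Y^2)$ have $L$-dimensions $0$, $1$, $2$, or $4$, and crucially not $3$, since $\m_f = (X,Y)$ is not principal. Dimension $0$ is excluded by Nakayama, as $\cM_{\Sigma,f}$ is a nonzero finitely generated $\Lambda_\lambda$-module, so $\overline{\cM}_{\Sigma,f} \neq 0$.

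The main obstacle is ruling out dimension $1$. This would force $\overline{\cM}_{\Sigma, f}$ to coincide with the socle $L \cdot \Phi_f$, which should be incompatible with the non-\'etaleness of the eigenvariety at $f$: the family must provide a genuine deformation direction, producing a specialisation strictly larger than the eigenspace. Making this precise requires the local study of the Bianchi eigencurve that the rest of Section \ref{sec:geometry at irregular points} is designed to carry out; accordingly, I view Proposition \ref{prop:spec} as a foundational stepping-stone towards the eventual Gorensteinness result, rather than as a self-contained computation.
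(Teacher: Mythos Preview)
Your treatment of injectivity and of part~(i) is correct and matches the paper: the same short exact sequence and long exact sequence are used, the only cosmetic difference being that you kill $\hc{2}(Y_N,\sD_\Sigma)_f$ via the Borel--Serre boundary sequence, whereas the paper first kills $\hc{2}(Y_N,\sD_\lambda)_f$ (via the control theorem, or equivalently $\hc{2}\simeq\mathrm{H}_0$) and then applies Nakayama over $\Lambda_\lambda$.

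Part~(ii), however, has a genuine gap. Your cyclicity step is flawed: Nakayama says $\cM_{\Sigma,f}$ is cyclic over $\bT_{\Sigma,f}$ if and only if the \emph{quotient} $\cM_{\Sigma,f}/\m_f\cM_{\Sigma,f}$ is one-dimensional, not the eigenspace $\cM_{\Sigma,f}[\m_f]$; these are dual notions and not interchangeable without already knowing freeness. Your density argument at $p$-regular weights $\lambda'$ only tells you that each individual eigenspace over $\lambda'$ is a line --- it says nothing about how many sheets lie over $\lambda'$ (that number is exactly the local degree of $\kappa_F$, which is what you are trying to determine), and generic rank one does not force cyclicity at a special fibre. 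In fact, in the paper's logic cyclicity (Proposition~\ref{prop:free rank one}) is deduced \emph{from} Gorensteinness, which in turn uses part~(ii); so invoking it here would be circular. You also rightly flag that you cannot exclude $\dim_L\overline{\cM}_{\Sigma,f}=1$ within this framework.

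The paper's argument is entirely different and is deferred to \S\ref{sec:hecke alg in families}. One first shows (via a Hida-type pairing for modular symbols, built in the appendix) that $\dim_L\overline{\cM}_{\Sigma,f}$ equals the $\Lambda_\lambda$-rank of $\bT_{\Sigma,f}$, i.e.\ the local degree of $\kappa_F$ at $f_{/F}$. The bound $\geq 2$ then comes from showing the $p$-adic base-change map $\cC\to\cE$ is locally a closed immersion at $f$ (a deformation-theoretic argument extending a $G_F$-pseudo-representation to $G_\Q$), and degree $3$ is excluded using the involution on $\cE$ induced by $\mathrm{Gal}(F/\Q)$: a non-base-change component of $\Lambda$-rank $1$ would be fixed by the involution, contradicting Lemma~\ref{lem:base-change fixed}.
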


\begin{proof} 
	Since $\mathbf{Z}_p \lsem \mathbf{Z}_p^{\times}\rsem$ is an UFD, its height one prime ideals are principal. In particular the maximal ideal $\m_\lambda = m_\lambda\OS$ is principal, and we have an associated short exact sequence \[0 \to \cD_\Sigma \xrightarrow{\ \times m_\lambda\ } \cD_\Sigma \to \cD_\lambda \to 0,\]
yielding a long exact sequence in cohomology
\begin{equation}\label{eq:long exact sequence}
\hc{i-1}(\Y,\sD_\lambda) \to \hc{i}(\Y,\sD_\Sigma) \xrightarrow{\ \times m_\lambda\ } \hc{i}(\Y,\sD_\Sigma) \to \hc{i}(\Y,\sD_\lambda) \to \hc{i+1}(\Y,\sD_\Sigma).
\end{equation}
 Localising this at $\m_f$ and truncating shows that specialisation induces injections
 \begin{equation}\label{eq:injective}
 	\hc{i}(\Y,\sD_\Sigma)_f \otimes_{\Lambda_\lambda}\Lambda_\lambda/\m_\lambda \hookrightarrow \hc{i}(\Y,\sD_\lambda)_f
 \end{equation}
 for all $i$. Injectivity of $\mathrm{sp}_\lambda$ is exactly this for $i = 1$.

Now suppose $K=\Q$; we turn to surjectivity in this case. Classical cusp forms do not appear in $\hc{2}(\Y,\sV_{\lambda}^\vee)$; so since $v_p(\alpha_p) = (k+1)/2 < k+1$, Theorem \ref{thm:control} implies $\hc{2}(\Y,\sD_\lambda)_{f} =0$. This can be seen alternatively as follows: we have $\hc{2}(\Y,\sD_\lambda) \simeq \mathrm{H}_{0}(\Y,\sD_\lambda)=\sD_\lambda/I^{\mathrm{aug}} \sD_\lambda$, where $I^{\mathrm{aug}}$ is the augmentation ideal of $\pi_1(\Y)$, and one can check as in \cite[Lem.~5.2,\S7]{PS12} that this space contains no cuspidal eigenpackets. It follows that $\hc{2}(\Y,\sD_\Sigma)_{f} = 0$ by Nakayama's lemma and \eqref{eq:injective} for $i=2$. Then the cokernel of $\mathrm{sp}_\lambda$ is 
\[	
	\hc{2}(\Y,\sD_\Sigma)_{f}[m_\lambda] = 0,
\]
proving (i). It remains to prove part (ii); we defer this to \S\ref{sec:hecke alg in families}.
\end{proof}

\begin{remarks}
This proof of (i) fails in the Bianchi setting, owing to the existence of classical eigenpackets in $\hc{2}$.  Further, we expect that $\mathrm{sp}_\lambda$ is \emph{not} surjective in this case; this would be implied by a conjecture of Calegari--Mazur \cite{CM09} (or more precisely, by a weaker form \cite[Conj.\ 5.13]{BW18}). Ultimately we do not determine whether $\mathrm{sp}_\lambda$ is surjective or not.

In the case $K$ imaginary quadratic, and $f_{\mathrm{new}}$ is $p$-regular and non-critical, $\mathrm{sp}_\lambda$ is shown to be an isomorphism in \cite[Thm.\ 4.5]{BW18}.
\end{remarks}

\subsection{The structure of Hecke algebras in families}\label{sec:hecke alg in families}

To complete the Bianchi case of Proposition \ref{prop:spec}, we translate the problem into the language of Hecke algebras. Let
\begin{equation}\label{eq:Hecke alg}
	\bT_{\Sigma} \defeq \bT\left(\cM_{\Sigma}\right), \ \ \ \overline{\bT}_{\Sigma} \defeq \bT(\overline{\cM}_{\Sigma}),
\end{equation} 
with decorations for localisations, $\eps$- and $\leq h$-parts as in Notation \ref{not:simplify} (e.g.\ $\bT_{\Sigma,f}^\eps$ is the localisation of $\bT(\cM_{\Sigma}^\eps)$ at $f$). The following relates these rings to Proposition \ref{prop:spec}(ii).

\begin{proposition}\label{prop:hecke isom}
	The natural surjection $\bT_{\Sigma,f}^\eps/\m_\lambda \isorightarrow \overline{\bT}_{\Sigma,f}^\eps$ is an isomorphism. In particular, if $r = \mathrm{dim}_L[ \overline{\cM}_{\Sigma,f}^\eps]$, then $\bT_{\Sigma,f}^\eps$ is free of rank $r$ over $\OSl$.
\end{proposition}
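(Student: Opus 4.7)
The natural surjection $\bT_{\Sigma,f}^\eps/\m_\lambda \twoheadrightarrow \overline{\bT}_{\Sigma,f}^\eps$ is immediate: $\m_\lambda$ annihilates $\overline{\cM}_{\Sigma,f}^\eps = \cM_{\Sigma,f}^\eps/\m_\lambda\cM_{\Sigma,f}^\eps$, so the action of $\bT_{\Sigma,f}^\eps$ on $\overline{\cM}_{\Sigma,f}^\eps$ factors through $\bT_{\Sigma,f}^\eps/\m_\lambda$, and by construction $\overline{\bT}_{\Sigma,f}^\eps$ is the image. The ``in particular'' clause will in fact imply the isomorphism by a dimension count, since $\dim_L \overline{\bT}_{\Sigma,f}^\eps = \dim_L \overline{\cM}_{\Sigma,f}^\eps = r$ by Remark \ref{rem:quotient} applied to the Hecke-stable subspace $\overline{\cM}_{\Sigma,f}^\eps \subset \cM_{\lambda,f}^\eps$. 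So the main task is to show $\bT_{\Sigma,f}^\eps$ is $\OSl$-free of rank $r$, after which $\dim_L \bT_{\Sigma,f}^\eps/\m_\lambda = r$ matches and forces the surjection to be an isomorphism.

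First I would show that $\cM_{\Sigma,f}^\eps$ itself is $\OSl$-free of rank $r$. Since $\cW$ is one-dimensional, $\OSl$ is a DVR whose uniformiser generates $\m_\lambda$. The long exact sequence \eqref{eq:long exact sequence}, taken one step earlier than in the proof of Proposition \ref{prop:spec}, identifies the $m_\lambda$-torsion on $\cM_\Sigma$ with the image of $\hc{0}(\Y,\sD_\lambda)$; this group vanishes since $\Y$ is non-compact and connected (any compactly supported section of a local system is zero). Hence $\cM_{\Sigma,f}^\eps$ is $m_\lambda$-torsion-free, and being finitely generated over the DVR $\OSl$ it is $\OSl$-free; its rank is $\dim_L \overline{\cM}_{\Sigma,f}^\eps = r$ by Nakayama.

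Next I would invoke the Hida-type duality for modular symbols in families developed in the appendix. This furnishes a pairing $\cM_{\Sigma,f}^\eps \times \bT_{\Sigma,f}^\eps \to \OSl$, constructed via evaluation of overconvergent modular symbols, which is non-degenerate in a neighbourhood of any cuspidal Hecke eigensystem, in particular at $f$. Non-degeneracy produces injections $\bT_{\Sigma,f}^\eps \hookrightarrow \mathrm{Hom}_{\OSl}(\cM_{\Sigma,f}^\eps, \OSl) \cong \OSl^r$ and $\OSl^r \cong \cM_{\Sigma,f}^\eps \hookrightarrow \mathrm{Hom}_{\OSl}(\bT_{\Sigma,f}^\eps, \OSl)$. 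The first shows $\bT_{\Sigma,f}^\eps$ is $\OSl$-torsion-free, hence $\OSl$-free of some rank $s \leq r$ (as a finitely generated module over a DVR); the second forces $s \geq r$. Hence $\bT_{\Sigma,f}^\eps \cong \OSl^r$ as required.

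The main obstacle is securing the non-degenerate pairing used in the previous paragraph. For $K = \Q$ it can be extracted from classical Hida duality for overconvergent modular forms; but no such analogue is available in the Bianchi setting, and for this reason the authors must construct the required pairing from scratch at the level of families of modular symbols in the appendix, with non-degeneracy at cuspidal eigensystems as its key property. Notably, this approach avoids having to determine in advance the value of $r$, and in particular does not rely on the (still open) question of surjectivity of $\mathrm{sp}_\lambda$ in the Bianchi case.
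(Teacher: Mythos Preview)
Your overall strategy is close to the paper's, and in the Bianchi case essentially coincides with it (the paper defers to Corollary~\ref{cor:hecke isomorphism} in the appendix, which runs much as you describe). But there is a genuine gap in your third paragraph. The appendix pairing is only shown to be non-degenerate in the \emph{Hecke} variable: Proposition~\ref{prop:hecke injection families} gives the injection $\bT_{\Sigma,V}\hookrightarrow [\cM_{\Sigma,V}]^\vee$, hence (after localising, as in Proposition~\ref{prop:hecke free rank s}) the bound $s\le r$. The second injection you invoke, $\cM_{\Sigma,f}^\eps \hookrightarrow \mathrm{Hom}_{\OSl}(\bT_{\Sigma,f}^\eps,\OSl)$, would require that for every nonzero $\Phi$ there is some $T$ with $\mathrm{Ev}_\Sigma(T\Phi)\neq 0$; this is never proved, and a priori there is no reason the single evaluation functional $\mathrm{Ev}_\Sigma$ should generate the dual of the Hecke orbit of $\Phi$. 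So your inequality $s\ge r$ is unsupported.

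The fix is already in your first paragraph. You have the surjection $\bT_{\Sigma,f}^\eps/\m_\lambda \twoheadrightarrow \overline{\bT}_{\Sigma,f}^\eps$ and you have $\dim_L \overline{\bT}_{\Sigma,f}^\eps = r$ from classical Hida duality (Remark~\ref{rem:quotient}, via the inclusion $\overline{\cM}_{\Sigma,f}^\eps \hookrightarrow \cM_{\lambda,f}^\eps \cong \hc{1}(\Y,\sV_\lambda^\vee)^\eps_f$ and non-criticality). These two facts together give $s = \dim_L \bT_{\Sigma,f}^\eps/\m_\lambda \ge r$, which combined with $s\le r$ finishes the argument. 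This is exactly how the paper closes the loop in Corollary~\ref{cor:hecke isomorphism}. For $K=\Q$ the paper instead short-circuits via Hida duality for overconvergent \emph{forms} (Coleman's~\cite[Prop.~B.5.6]{Col97} plus the identification of eigencurves in~\cite[Thm.~3.30]{Bel12}), but your uniform modular-symbols approach is also valid there once the gap is patched. One minor quibble: $\Y$ need not be connected in the Bianchi case (it has $\#\mathrm{Cl}(F)$ components), but each component is non-compact, so $\hc{0}(\Y,\sD_\lambda)=0$ still holds.
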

\begin{proof}
	We first treat the case $K=\Q$. Recall (Theorem \ref{thm:control}, \eqref{eq:eichler-shimura L}) that we have Hecke equivariant isomorphisms of $2$-dimensional $L$-vector spaces 
	\[
	\hc{i}(\Y,\sD_\lambda)_{f}^\eps \isorightarrow \hc{i}(\Y,\sV_\lambda^\vee)_{f}^\eps \isorightarrow  S_\lambda(\Gamma_0(N),L)_f.
	\]
	Let $S^\dagger_\lambda(\Gamma_0(N),L)$ denote the space of overconvergent cuspforms of weight $\lambda$; since $v_p(\alpha_p) < k+1$, by Coleman's classicality theorem the natural inclusion $S_\lambda(\Gamma_0(N),L)_f \hookrightarrow S^\dagger_\lambda(\Gamma_0(N),L)_f $ is an isomorphism.	 Since the cuspidal eigencurve constructed using overconvergent symbols is naturally isomorphic to the cuspidal Coleman--Mazur eigencurve (see \cite[Thm.\ 3.30]{Bel12}), both have the same local ring at $f$. Hence Hida's duality for Coleman families \cite[Prop.B.5.6]{Col97} yields that $\bT_{\Sigma,f}^\eps/\m_\lambda$ acts faithfully on  $S^\dagger_\lambda(\Gamma_0(N),L)_f  \simeq S_\lambda(\Gamma_0(N),L)_f$. Thus, $\bT_{\Sigma,f}^\eps/\m_\lambda$ is $2$-dimensional by Hida's duality for overconvergent cuspforms. Finally, the surjection $\bT_{\Sigma,f}^\eps/\m_\lambda \twoheadrightarrow \overline{\bT}_{\Sigma,f}^\eps$
	is necessarily an isomorphism since $\overline{\bT}_{\Sigma,f}^\eps$ is also $2$-dimensional.
	
	In the Bianchi case, there is no theory of overconvergent modular forms, and hence no Hida duality for overconvergent modular symbols. We prove results towards Hida duality in this case in the appendix, and use them to prove this isomorphism in Corollary \ref{cor:hecke isomorphism}.  
	
	The last statement follows from Nakayama's lemma, since $\Lambda_\lambda$ is a principal ideal domain.
\end{proof}

\subsubsection{Local pieces of eigencurves}\label{sec:local pieces}

By non-criticality and Remark \ref{rem:quotient}, the vector spaces $\overline{\cM}_{\Sigma,f}^\eps$ and $\overline{\bT}_{\Sigma,f}^\eps$ are dual, so have the same dimension. To prove Proposition \ref{prop:spec}(ii), by Proposition \ref{prop:hecke isom} it thus suffices to prove that in the Bianchi case, the ring $\bT_{\Sigma,f}$ has rank 2 or 4 over $\Lambda_\lambda$ (recalling $\eps = \varnothing$ in the Bianchi setting). We prove this by considering the geometry of the (classical and Bianchi) eigencurves.  It is now important to distinguish between the classical and Bianchi situations, so for the rest of \S\ref{sec:hecke alg in families}, we will distinguish the base-fields $\Q$ and $F$ in notation. We write $f$ for the ($p$-irregular) classical modular form and $\BCf$ for its base-change. Write $\cM_{\Q,\Sigma}$ (resp.\ $\cM_{F,\Sigma}$) for the classical (resp.\ Bianchi) overconvergent cohomology in families, with the usual decorations.

\begin{definition}
	Let $\cC$ be the cuspidal Coleman--Mazur eigencurve of tame level $\Gamma_0(M)$, equipped with the usual weight map $\kappa : \cC \to \cW$ to the weight space $\cW$ from \S\ref{sec:dists in families}. Let $\cE$ be the cuspidal parallel weight Bianchi eigenvariety (see \cite[\S5]{BW18}) of tame level $\Gamma_0(M\cO_F)$, with a map $\kappa_F : \cE \to \cW$ to the parallel weight line inside the natural two-dimensional Bianchi weight space $\cW_F$. (This is the union of the irreducible components in the full Bianchi eigenvariety that lie over the parallel weight line and contain a classical cuspidal point).
\end{definition}

Of particular importance for us is that local pieces of $\cC$ and $\cE$ can be constructed via overconvergent cohomology, as explained in \cite{AS08,Urb11,Han17}. For $K = \Q$ or $F$, let
\begin{equation}\label{eq:Hecke alg}
	\bT_{K,\Sigma} \defeq \bT(\cM_{K,\Sigma}), \ \ \ \overline{\bT}_{K,\Sigma} \defeq \bT(\overline{\cM}_{K,\Sigma}), 
\end{equation}
 In particular, we have
\[
\bT_{\Q,\Sigma}^{\leq h} \defeq \bT(\cM_{\Q, \Sigma}^{\leq h}) \hspace{12pt} \text{ and }\hspace{12pt} \bT_{F,\Sigma}^{\leq h} \defeq \bT(\cM_{F, \Sigma}^{\leq h}).
\]
Then
\[
\cC_{\Sigma}^{\leq h} \defeq \mathrm{Sp}(\bT_{\Q,\Sigma}^{ \leq h}) \subset \cC \hspace{12pt} \text{ and }\hspace{12pt} \cE_{\Sigma}^{\leq h} \defeq \mathrm{Sp}(\bT_{F,\Sigma}^{\leq h}) \subset \cE
\]
are local pieces, which for sufficiently large $h$ contain the points corresponding to $f$ and $\BCf$ respectively. Let $\cT_{\cC,f}$ (resp.\ $\cT_{\cE,f}$, resp.\ $\varLambda$) be the completed strictly Henselian local ring of $\cC$ (resp.\ $\cE$, resp.\ $\cW$) at $f$ (resp.\ $f_{/F}$, resp.\ $\lambda$). We see:
\begin{equation} \label{eq:completed local ring}
	\cT_{\cC,f}\text{ is the completion of the strict Henselisation of } \bT_{\Q,\Sigma,f} = \bT(\cM_{\Q,\Sigma}^{\leq h})_{\m_f},
\end{equation}
\begin{equation}\label{eq:completed local ring F}
	\cT_{\cE,f} \text{ is the completion of the strict Henselisation of } \bT_{F,\Sigma,f} = \bT(\cM_{F,\Sigma}^{\leq h})_{\m_{f/F}},
\end{equation}
\begin{equation}\label{eq:completed local ring W}
	\text{and both are modules over $\varLambda,$  the completion of the strict Henselisation of $\Lambda_\lambda$.}
\end{equation}
(Here we implicitly use that the algebraic and rigid localisations of the structure sheaf of a rigid space have the same completion \cite[\S7.3.2]{BGR}). Moreover in the rational case, since $f$ is cuspidal we have $\bT(\cM_{\Q,\Sigma,f}) = \bT(\cM_{\Q,\Sigma,f}^+) = \bT(\cM_{\Q,\Sigma,f}^-)$ (see \cite[Thm.\ 3.30]{Bel12}), so we can ignore signs here.

\subsubsection{The $p$-adic base-change map}\label{sec:surjective}
By \S\ref{sec:local pieces}, to complete Proposition \ref{prop:spec} we must show that the weight map $\kappa_F : \cE \to \cW$ has degree either 2 or 4 locally at $f_{/F}$. By Propositions \ref{prop:spec} and \ref{prop:hecke isom} in the classical case, we know that $\kappa : \cC\to \cW$ has degree 2 locally at $f$. To get the lower bound in the Bianchi case, we use \cite[Thm.\ 3.2.1, \S4.3]{JoNew}. This gives a natural map
\[
	\mathrm{BC} : \cC \longrightarrow \cE,
\]
induced from a map of abstract Hecke algebras $\mathrm{BC}^* : \bH_{N,F} \to \bH_{N,\Q}$, that interpolates the base-change transfer on classical points (so $\mathrm{BC}(f) = \BCf$). We write $\cE\bc$ for the image of this map. The map $\mathrm{BC}$ lies over $\cW$ in the sense that the following commutes:
\begin{equation}\label{base-change-funct}
\xymatrix{
\cC \ar[rr]^-{\mathrm{BC}}\ar[rd]_{\kappa} && \cE\bc \ar[ld]_{\kappa_F}\sar{r}{\subset} & \cE\ar[lld]^{\kappa_F}\\
& \cW  & & }
\end{equation}

Globally, the map $\mathrm{BC}^*$ will not be a surjection $\cO_{\cE} \to \mathrm{BC}_{*}(\cO_{\cC})$ (see Remark \ref{propertyBC} (3)). This encodes the fact that both $f$ and $f\otimes \chi_{F/\Q}$ have the same base-change (where $\chi_{F/\Q}$ is the quadratic character of $F$), so $\mathrm{BC}$ cannot globally be a closed immersion. Locally, however, we do have surjectivity. We prove this using deformation theory. Let
\[
	\mathrm{Ps}_{\cC}:G_\Q \longrightarrow \cO_{\cC}(\cC)
\]
 be the universal pseudo-character over $\cC$ (see, for example, \cite{JoNewExt}), which sends $\mathrm{Frob}_\ell$ to $T_\ell \in \cO(\cC)$ for any prime number $\ell \nmid Mp$, and let 
\[
\mathrm{Ps}_{\cE\bc}:G_F \longrightarrow \cO_{\cE\bc}(\cE\bc)
\]
 be the universal pseudo-character over $\cE\bc$, sending $\mathrm{Frob}_{\mathfrak{q}}$ to $T_{\mathfrak{q}} \in \cO(\cE^{\mathrm{bc}})$ for any prime $\mathfrak{q} \nmid Mp\cO_F$.

\begin{remark}\label{propertyBC}
\begin{enumerate}
	\item The structural map $\mathrm{BC}^{*}:\cO_{\cE\bc} \to \mathrm{BC}_{*}(\cO_{\cC})$ induces an equality
	\[
		 \mathrm{BC}^{*}(\mathrm{Ps}_{\cE\bc})=(\mathrm{Ps}_{\cC})_{\mid G_F}.
	\]
	
	\item For any $\mathfrak{p} \mid p$, $\mathrm{BC}^{*}(U_{\mathfrak{p}})=U_p \in \cO_{\cC}(\cC)^{\times}$ (recalling that $p$ splits in $F$). 

	\item Similarly, for any rational prime $q \nmid N$ split in $F$, we have $\mathrm{BC}^*(T_{\mathfrak{q}}) = T_q$ for any $\mathfrak{q}|q$. If $q$ is inert in $F$, then $\mathrm{BC}^*(T_\mathfrak{q})$ is a degree 2 polynomial in $T_q$ over $\cO_{\cW}(\cW)$ (see \cite[\S4.3]{JoNew}, noting the operator $[U_0(\n)\smallmatrd{q}{}{}{q}U_0(\n)]$ acts on $\hc{1}(\Y,\sD_\Sigma)$ by the scalar $\chi_\Sigma(q) \in \cO_{\cW}(\Sigma)$).
\end{enumerate}
\end{remark}

As in \eqref{eq:completed local ring}--\eqref{eq:completed local ring W}, let $\cT_{\cE\bc,f}$ be the completed (strictly Henselian) local ring of $\cE\bc$ at $\BCf$. Then \eqref{base-change-funct} induces a commutative diagram
\begin{equation}
\xymatrix{
	 \mathcal{T}_{\cC,f}	
	&&\cT_{\cE\bc,f}\ar[ll]_{\mathrm{BC}^*} & \cT_{\cE,f}\ar@{->>}[l] \\
	& \varLambda \ar[ul]^{\kappa^*} \ar[ur]^-{\kappa_F^*}\ar[urr]_{\kappa_F^*} 
	 &&
}
\end{equation}

\begin{prop}\label{prop:base-change surj}
	The base-change map $\cT_{\cE,f} \to \mathcal{T}_{\cC,f}$ is surjective. In particular, the base-change map $\cT_{\cE\bc,f} \to \mathcal{T}_{\cC,f}$ is an isomorphism, $\mathrm{BC} : \cC\to \cE$ is locally a closed immersion at $f$, and $\kappa_F: \cE\to \cW$ has degree $\geq 2$ locally at $f_{/F}$.
\end{prop}
\begin{proof} First, note that $f$ cannot have CM by $K$. If it did, $f$ would be ordinary since $p$ splits in $K$; but this contradicts the irregularity of $f$ at $p$.

 Since $\cT_{\cE\bc,f}$ is the quotient of $\cT_{\cE,f}$ cutting out the image of $\mathrm{BC}$, it suffices to prove that the base-change map $\cT_{\cE\bc,f} \to \cT_{\cC,f}$ is surjective.  Denote the pushforwards of $\Ps_{\cC}$ and $\Ps_{\cE\bc}$ under localisation (at $f$ and $\BCf$) by
	\[
		\Ps_{\cC,f} : G_{\Q} \longrightarrow \cT_{\cC,f} \ \ \ \text{ and }\ \ \ \Ps_{\cE\bc,f} : G_F \longrightarrow \cT_{\cE\bc,f}
	\]
	 respectively. Let $\cT_{\cC,f}^{+}$ be the image of $\cT_{\cE\bc,f}$ in $\cT_{\cC,f}$, and let 
	 \[
	 	\Ps_{\cC,f}^+ : G_F \longrightarrow \cT_{\cC,f}^+
	 \]
	 be the pushforward of $\Ps_{\cE\bc,f}$ under $\cT_{\cE\bc,f} \to \cT_{\cC,f}^+$. Since $\mathrm{BC}^{*}(\mathrm{Ps}_{\cE\bc})=(\mathrm{Ps}_{\cC})_{\mid G_F}$, we see $\Ps_{\cC,f}^+$ is exactly the restriction of $\Ps_{\cC,f}$ to $G_F$, and that $\cT_{\cC,f}^{+}$ is topologically generated over $\varLambda[U_p]$ by the image of $\mathrm{Ps}_{\cC,f}(G_F)$.
	
	As the restriction of a $G_{\Q}$-representation, $\mathrm{Ps}_{\cC,f}^+$ is invariant under the action by conjugation of $G_\Q$, and modulo the maximal ideal we have
	\[
		 \mathrm{Ps}_{\cC,f}^+ \equiv (\mathrm{tr} \rho_f)|_{ G_F} \newmod{\m_{\cC,f}^+},
	\]
	as $G_F$-representations, where $\rho_f:G_\Q \to \GL_2(\overline{\Q}_p)$ is the $p$-adic Galois representation attached to $f$ and $\m_{\cC,f}^+$ is the maximal ideal of $\cT_{\cC,f}^+$. Since $f$ does not have CM by $F$, the restriction $(\rho_f)_{\mid G_F}$ is absolutely irreducible. Then the main result of \cite{Nys96} yields that there exists a deformation 
	\[
		\rho_{\cC,f}^+:G_F \longrightarrow \GL_2(\cT_{\cC,f}^+)
	\]
	 of $(\rho_f)_{\mid G_F}$ such that $\mathrm{tr} \rho_{\cC,f}^+=\mathrm{Ps}_{\cC,f}^+$.  

Since we use strictly Henselian completed local rings (i.e.\ with residue field $\overline{\Q}_p$), we have
 \[
\rH^2(\Gal(F/\Q),(\cT_{\cC,f}^+)^\times)=(\cT_{\cC,f}^+)^{\times}/(\cT_{\cC,f}^+)^{\times,2} = 0,
\]
the final equality being Hensel's lemma. Thus by \cite[Thm.\ A.1.1]{Hid95} we can extend $\rho_{\cC,f}^+$ to a deformation 
\[
	\widetilde\rho_{\cT_{\cC,f}^+}:G_\Q \to \GL_2(\cT_{\cC,f}^+)
\]
 of $\rho_f$. (Conditions (AI$_{\mathrm{H}})$ and (Inv) \emph{op.\ cit}.\ are satisfied since $f_{/F}$ is cuspidal  base-change).  Any other $G_\Q$-extension of $\rho_{\cC,f}^+$ is a twist of $\widetilde\rho_{\cT_{\cC,f}^+}$ by the quadratic character $\chi_{F/\Q}$ of $\Gal(F/\Q)$. 
 
 On the other hand, using \cite{Nys96} again yields that $\mathrm{Ps}_{\cC,f}$ is the trace of a deformation $\rho_{\cC,f}:G_\Q \to \GL_2(\mathcal{T}_{\cC,f})$ of $\rho_f$, and that $\rho_{\cC,f}$ is an extension to $G_\Q$ of $\rho_{\cC,f}^+$. By the above remarks, $\rho_{\cC,f}$ is equal to $\widetilde\rho_{\cT_{\cC,f}^+}$ up to a twist by a quadratic character of $\Gal(F/\Q)$. 
 
 Finally, $\mathcal{T}_{\cC,f}$ is generated over $\varLambda$ by $U_p$ and the trace of $\rho_{\cC,f}$. Since $p$ is split, we know that $U_p \in \cT_{\cC,f}^+$ by Remark \ref{propertyBC}(2); and the submodule generated by the traces of $\rho_{\cC,f}$ and $\widetilde\rho_{\cT_{\cC,f}^+}$ are the same since they possibly differ only by quadratic twist. Hence $\mathcal{T}_{\cC,f} = \cT_{\cC,f}^+= \mathrm{Im}(\cT_{\cE^{\mathrm{bc}},f})$, as required.
 
 Given the surjectivity, $\mathrm{BC}$ is locally a closed immersion at $f$ by \cite[7.3.3, Prop.\ 4]{BGR}. Hence $\mathrm{deg}(\kappa_F) \geq \mathrm{deg}(\kappa) = 2$ locally at $f_{/F}$.
\end{proof}

\begin{corollary}\label{cor:base-change surj}	The base-change map $\bT_{F,\Sigma,f} \to \bT_{\Q,\Sigma,f}$ is surjective.
\end{corollary}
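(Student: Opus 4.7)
The plan is to descend the surjectivity from Proposition \ref{prop:base-change surj} — which gives $\cT_{\cE,f} \twoheadrightarrow \cT_{\cC,f}$ — down to the Hecke algebras themselves via faithfully flat descent. The key point is that strict Henselisation followed by completion is a faithfully flat operation on Noetherian local rings, so both structural maps $\bT_{F,\Sigma,f} \to \cT_{\cE,f}$ and $\bT_{\Q,\Sigma,f} \to \cT_{\cC,f}$ are faithfully flat.

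I would first observe that, thanks to Proposition \ref{prop:hecke isom} in its already-proven classical case, $\bT_{\Q,\Sigma,f}$ is a free $\OSl$-module of rank $2$, and hence is module-finite over $\bT_{F,\Sigma,f}$ via $\mathrm{BC}^*$ (since the structural map $\OSl \to \bT_{\Q,\Sigma,f}$ factors as $\OSl \xrightarrow{\kappa_F^*} \bT_{F,\Sigma,f} \xrightarrow{\mathrm{BC}^*} \bT_{\Q,\Sigma,f}$). The cokernel $C$ of $\mathrm{BC}^*$ is then a finitely generated $\bT_{F,\Sigma,f}$-module, and faithful flatness reduces the vanishing of $C$ to the vanishing of $C \otimes_{\bT_{F,\Sigma,f}} \cT_{\cE,f}$. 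Invoking the identification $\bT_{\Q,\Sigma,f} \otimes_{\bT_{F,\Sigma,f}} \cT_{\cE,f} \cong \cT_{\cC,f}$ (valid because $\bT_{\Q,\Sigma,f}$ is local and module-finite over $\bT_{F,\Sigma,f}$), right-exactness of the tensor product identifies $C \otimes_{\bT_{F,\Sigma,f}} \cT_{\cE,f}$ with $\mathrm{coker}(\cT_{\cE,f} \to \cT_{\cC,f})$, which vanishes by Proposition \ref{prop:base-change surj}.

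The only mildly technical point requiring attention is the identification $\bT_{\Q,\Sigma,f} \otimes_{\bT_{F,\Sigma,f}} \cT_{\cE,f} \cong \cT_{\cC,f}$, which is the statement that completion-after-strict-Henselisation commutes with finite local base change; this in turn reduces to the analogous commutations of strict Henselisation and of $\mathfrak{m}$-adic completion separately over Noetherian local rings. Beyond this, the argument is a short exercise in faithfully flat descent, with all the substantive content contained in Proposition \ref{prop:base-change surj}.
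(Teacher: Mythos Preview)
Your argument is correct, and it takes a genuinely different route from the paper. The paper argues geometrically: from the surjection $\cT_{\cE,f}\twoheadrightarrow\cT_{\cC,f}$ it invokes \cite[7.3.3, Prop.~4]{BGR} to deduce that $\mathrm{BC}$ is a closed immersion in an affinoid neighbourhood $V_F$ of $f_{/F}$, so that $\cO(V_F)\to\cO(\mathrm{BC}^{-1}(V_F))$ is surjective, and then simply localises at $f_{/F}$ and $f$. You instead stay entirely on the algebraic side and descend the surjectivity directly via faithful flatness of $\bT_{F,\Sigma,f}\to\cT_{\cE,f}$, using the identification $\bT_{\Q,\Sigma,f}\otimes_{\bT_{F,\Sigma,f}}\cT_{\cE,f}\cong\cT_{\cC,f}$. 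Your justification of that identification is sound: since $\mathrm{BC}^*$ is a local map of Noetherian local rings with the same residue field and $\bT_{\Q,\Sigma,f}$ is module-finite over $\bT_{F,\Sigma,f}$ (both being finite over $\OSl$), strict Henselisation and then completion commute with this base change. Your approach is slightly more self-contained (it avoids the rigid delocalisation input from \cite{BGR}), while the paper's approach has the minor bonus of producing an actual affinoid on which $\mathrm{BC}$ is a closed immersion, though this is not used elsewhere.
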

\begin{proof}
	By the above, there exists an affinoid neighbourhood $V_F$ of $f_{/F}$ in $\cE$ such that $\mathrm{BC}: \mathrm{BC}^{-1}(V_F) \to V_F$ is a closed immersion, that is, $\cO(V_F) \to \cO(\mathrm{BC}^{-1}(V_F))$ is surjective (noting $\mathrm{BC}^{-1}(V_F)$ is an affinoid since $\mathrm{BC}$ is finite). The result follows after localising at $f_{/F}$ and $f$ respectively.
\end{proof}

\subsubsection{The parity of the weight map at $f_{/F}$}\label{sec:parity}

Let $g_{\mathrm{new}}$ be a cuspidal Bianchi eigenform, generating a cuspidal automorphic representation $\pi_g$ of $\mathrm{GL}_2(\mathbb{A}_F)$. Let $c$ denote a lift of the non-trivial element of $\mathrm{Gal}(F/\Q)$ to an element of $G_{\Q}$. This acts on $\GL_2(\A_F)$ in the obvious way, and pre-composing we get a new cuspidal automorphic representation $\pi_g^c$. For any place $v$ of $F$, this will satisfy $\pi_{g,v}^c = \pi_{g,c v}$; that is, if $v$ lies above a prime of $\Q$ that splits in $F$, then $\pi_{g,v}^c = \pi_{g,\overline{v}}$. If $g_{\mathrm{new}}$ has level $\Gamma_0(M\cO_F)$ with $M \in \Z$, then the new vector $g_{\mathrm{new}}^c \in \pi_g^c$ is also a newform of level $\Gamma_0(M\cO_F)$; and then $c$ defines an involution on the space of Bianchi newforms of level $\Gamma_0(M\cO_F)$. We also have an analogous involution $c$ on $\bH_N$ which swaps $T_v$ and $T_{\overline{v}}$ for $(\ell) = v\overline{v}$ split and fixes $T_{\ell\cO_F}$ for $\ell$ inert.

\begin{lemma}\label{lem:base-change fixed}
	\begin{itemize}
		\item[(i)] The cuspidal automorphic representation is base-change if and only if $\pi_g = \pi_g^c$. 
		\item[(ii)] Let $g$ be a $p$-stabilisation of $g_{\mathrm{new}}$, corresponding to a system of eigenvalues $\phi_g : \bH_N\to \overline{\Q}_p$. Then $\phi_g$ appears in $\cE^{\bc}$ if and only if $\phi_g = \phi_g^c \defeq \phi_g \circ c$.
	\end{itemize}
\end{lemma}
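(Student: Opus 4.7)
The plan is to treat (i) and (ii) in order, using (i) as an input to (ii). For part (i), I would invoke the cyclic base-change theorem of Langlands (with the converse of Arthur--Clozel) applied to the quadratic extension $F/\Q$: a cuspidal automorphic representation $\pi$ of $\GL_2(\A_F)$ arises as the base-change of a cuspidal representation of $\GL_2(\A_\Q)$ if and only if $\pi \cong \pi^c$. The forward implication follows immediately from the compatibility of the base-change lift with the Galois action on $\GL_2(\A_F)$, while the converse is the substantive direction and rests on the trace formula arguments in \emph{loc.\ cit.}

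For part (ii), the essential elementary observation is that $\mathrm{BC}^* \circ c = \mathrm{BC}^*$ as algebra maps $\bH_N \to \bH_{N}$ (on the $\Q$ side). This can be read off Remark \ref{propertyBC}: at split rational primes $\ell = v\overline{v}$, $c$ swaps $T_v$ and $T_{\overline{v}}$, both sent to $T_\ell$ by $\mathrm{BC}^*$; at inert primes, $c$ fixes $T_{\ell\cO_F}$; and at $\pri, \overline{\pri}$ above $p$, $c$ swaps $U_{\pri}$ and $U_{\overline{\pri}}$, both sent to $U_p$. The forward direction of (ii) is then immediate: if $\phi_g$ occurs in $\cE^{\mathrm{bc}} = \mathrm{Im}(\mathrm{BC})$, then $\phi_g = \phi_f \circ \mathrm{BC}^*$ for the eigensystem $\phi_f$ attached to some point of $\cC$, and thus $\phi_g \circ c = \phi_f \circ \mathrm{BC}^* \circ c = \phi_f \circ \mathrm{BC}^* = \phi_g$.

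For the reverse direction of (ii), suppose $\phi_g = \phi_g^c$. Then $\pi_g$ and $\pi_g^c$ have matching Hecke eigensystems away from $p$, and by strong multiplicity one for $\GL_2$ they are isomorphic; applying (i) to $\pi_{g_{\mathrm{new}}}$, I conclude that $g_{\mathrm{new}}$ is the base-change of a classical newform $f_{\mathrm{new}}$ of level $M$. The eigenvalues $\phi_g(U_{\pri})$ and $\phi_g(U_{\overline{\pri}})$ are both roots of the Hecke polynomial of $f_{\mathrm{new}}$ at $p$, and by the $c$-invariance hypothesis they coincide to a common value $\alpha_p$; the $p$-stabilisation $f$ of $f_{\mathrm{new}}$ with $U_p$-eigenvalue $\alpha_p$ then defines a point of $\cC$ whose base-change eigensystem is precisely $\phi_g$. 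The main obstacle, and the whole purpose of the $c$-invariance hypothesis, is exactly to force a single choice of root $\alpha_p$ at both primes above $p$, so that one classical $p$-stabilisation of $f_{\mathrm{new}}$ suffices to realise $\phi_g$ as a base-change eigensystem; without it, no such common $\alpha_p$ need exist and $\phi_g$ may fail to arise from $\cC$.
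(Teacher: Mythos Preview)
Your argument is correct. For part (i) both you and the paper simply cite Langlands. For the forward direction of (ii) you spell out the identity $\mathrm{BC}^* \circ c = \mathrm{BC}^*$ on generators, whereas the paper just says ``clearly $\phi_g = \phi_g^c$''; these are the same argument.

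The genuine difference is in the reverse direction of (ii). You use strong multiplicity one to pass from $\phi_g = \phi_g^c$ to $\pi_{g_{\mathrm{new}}} \cong \pi_{g_{\mathrm{new}}}^c$, and then feed this directly into part (i). The paper instead invokes the Asai $L$-function criterion: $L^{\mathrm{As}}_S(g_{\mathrm{new}},s)$ has a pole if and only if $g_{\mathrm{new}}$ is base-change, and the hypothesis $\phi_g(T_v)=\phi_g(T_{cv})$ for $v\notin S$ forces the Euler factors to match those of a symmetric-square-times-zeta factorisation, producing the pole. Your route is shorter and reduces cleanly to (i) as a black box; the paper's route gives an independent characterisation of base-change via Asai, at the cost of invoking that machinery. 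The paper also records, in a remark following the proof, a third approach via Galois representations (extending $\rho_g$ to $G_\Q$ and applying Fontaine--Mazur), which is closer in spirit to the deformation theory of the surrounding section. All three approaches finish the same way at $p$, using $\phi_g(U_{\pri})=\phi_g(U_{\pribar})$ to pick a single classical $p$-stabilisation upstairs.

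One small imprecision: you say the Hecke eigensystems match ``away from $p$'', but $\bH_N^{\mathrm{tame}}$ only records places away from $N=Mp$. This is still almost all places, so strong multiplicity one applies unchanged.
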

\begin{proof}
	Part (i) is \cite[\S6.1]{Gel95}, proved in \cite{Lan80}. For part (ii), if $\phi_g$ appears in $\cE^{\bc}$ then $g$ is base-change and clearly $\phi_g = \phi_g^c$. Conversely, suppose $\phi_g = \phi_g^c$. Let $S$ denote the set of primes dividing $N$, and consider the Asai $L$-function $L^{\mathrm{As}}_S(g_{\mathrm{new}},s)$ with the Euler factors at primes in $S$ removed. By \cite{Asa77} (modified analogously to the Bianchi setting), this $L$-function has meromorphic continuation to $\C$, and has a pole if and only if $g_{\mathrm{new}}$ is base-change. This is proved by examining the Euler factors (as described in \cite[\S3]{Gha99}) and observing that when $g_{\mathrm{new}}$ is base-change, the Asai $L$-function factors as the product of a symmetric square $L$-function with the Riemann zeta function. Since $\phi_g(T_v) = \phi_g^c(T_v) = \phi_g(T_{cv})$ for all $v \notin S$, the same proof shows an analogous factorisation for $L^{\mathrm{As}}_S(g_{\mathrm{new}},s)$, which thus has a pole, and hence $g_{\mathrm{new}}$ is the base-change of some classical modular form $\tilde g_{\mathrm{new}}$. Finally, since $\phi_g(U_{\pri}) = \phi_g(U_{\pribar})$, we see $g$ is the base-change of a $p$-stabilisation of $\tilde g_{\mathrm{new}}$, and we are done.
\end{proof}

\begin{remark}
	An alternative argument is as follows: if $\phi_g = \phi_g^c$, then the associated Galois representation $\rho_g$ of $G_{F,S}$ attached to $g_{\mathrm{new}}$ is fixed under conjugation by $c$, and hence by \cite[Lem.\ A.1]{BW18} it admits an extension $\tilde\rho_g$ to $G_{\Q,S}$. If $g$ has sufficiently regular weight (as is always the case for $p$-irregular $g$) then $\rho_g$ -- hence $\tilde\rho_g$ -- is crystalline at $p$ by \cite[Lem.\ 3.15]{Che04}; hence $\tilde\rho_g$ is the representation of a classical newform $\tilde g_{\mathrm{new}}$ by Fontaine--Mazur, and $\tilde g_{\mathrm{new}}$ base-changes to $g_{\mathrm{new}}$.
\end{remark}

We get an induced involution on the eigencurve. The action at infinity induces an involution on the full Bianchi weight space $\cW_F$ that fixes the parallel weight subspace $\cW$. Since by definition every irreducible component of $\cE$ contains a classical cuspidal point, every such component necessarily contains a Zariski-dense set of cuspidal classical non-critical points, and by an application of $p$-adic Langlands functoriality (\cite[Thm.\ 3.2.1]{JoNew} or \cite[Thm.\ 5.1.6]{Han17}), we obtain an involution 
\begin{equation}\label{eq:involution eigencurve}
	\xymatrix{
			\cE\ar[rr]^-{\iota}\ar[rd]_-{\kappa_F} && \cE \ar[ld]^-{\kappa_F}\\
			& \cW &
		}
\end{equation}
of $\cE$ over $\cW$. We highlight that the involution acts trivially on $\cW$ since the central characters of the classical points of $\cE$ factor through the norm of $F/\Q$.

Let
\[
\mathrm{Ps}_{\cE}:G_F \longrightarrow \cO_{\cE}(\cE)
\]
be the $2$-dimensional universal pseudo-character carried by $\cE$. Then the involution $\iota:\cE \to \cE$ satisfies $\iota^*(\mathrm{Ps}_{\cE})=\mathrm{Ps}_{\cE}^c$, where $\mathrm{Ps}_{\cE}^c(g)=\mathrm{Ps}_{\cE}(c\cdot g \cdot c^{-1})$ for all $g \in G_F$. We let $\cE^{\nbc}$ be the closed analytic subset of $\cE$ given by the union of the irreducible components of $\cE$ which are not base-change; then as topological spaces, we have $\cE= \cE^{\mathrm{bc}} \cup \cE^{\nbc}$. We highlight that the property being a non-base-change point is open but not closed, and hence $\cE^{\nbc}$ might be seen as the Zariski analytic closure of the non-base-change points.

By Lemma \ref{lem:base-change fixed} the involution $\iota$ (preserves) and acts trivially on $\cE^{\mathrm{bc}}$, and (preserves) and acts non-trivially on  $\cE^{\nbc}$. It might be possible that $\BCf$ belongs also to $\cE^{\nbc}$, in which case $\BCf$ will be a crossing point between $\cE^{\nbc}$ and $\cE\bc$. 

We recall that $\varLambda$, $\cT_{\cE\bc,f}$ and $\cT_{\cE,f}$ are the completed (strictly Henselian) local rings of $\cW$ at $\kappa_F(\BCf)$, $\cE\bc$ at $\BCf$ and $\cE$ at $\BCf$ respectively. If $\BCf \in \cE^{\nbc} $, we let $\cT_{\cE^{\nbc},f}$ be the completed (strictly Henselian) local ring  of $\cE^{\nbc}$ at $\BCf$. Since the involution $\iota$ fixes $f_{\mid F}$, it acts on $\cT_{\cE,f}$, $\cT_{\cE^{\bc},f}$ and $\cT_{\cE^{\nbc},f}$; and from this and \eqref{eq:involution eigencurve}, the (finite flat) structure map $\kappa_F^*:\varLambda \to  \cT_{\cE,f}$ fits into diagrams
\begin{equation}\label{eq:involution hecke}
	\xymatrix{
		\cT_{\cE,f} && \cT_{\cE,f}\ar[ll]^-{\iota^*} \\
		& \varLambda \ar[ru]_-{\kappa_F^*}\ar[lu]^-{\kappa_F^*} &
	}\hspace{12pt}
	\xymatrix{
	\cT_{\cE^{\bc},f} && \cT_{\cE^{\bc},f}\ar[ll]^-{\iota^*} \\
	& \varLambda \ar[ru]_-{\kappa_F^*}\ar[lu]^-{\kappa_F^*} &
}
\hspace{12pt}
\xymatrix{
	\cT_{\cE^{\nbc},f} && \cT_{\cE^{\nbc},f}\ar[ll]^-{\iota^*} \\
	& \varLambda \ar[ru]_-{\kappa_F^*}\ar[lu]^-{\kappa_F^*} &
}.
\end{equation}

\begin{prop}\label{prop:2 or 4} One of the following assertions always holds true:
\begin{enumerate}
\item   $\mathrm{rk}_{\varLambda}\  \cT_{\cE,f} = 2$ and $\cT_{\cE,f} \simeq \cT_{\cE\bc,f} \cong \cT_{\cC,f}$, or
\item  $\mathrm{rk}_{\varLambda}\  \cT_{\cE,f} \geq 4$.
\end{enumerate}
\end{prop}
\begin{proof}

Recall that $\cT_{\cE\bc,f} \simeq \cT_{\cC,f}$ by Proposition \ref{prop:base-change surj}, and that the rank of $ \cT_{\cC,f}$ over $\varLambda$ is exactly two by Proposition \ref{prop:hecke isom}.  This implies that $\mathrm{rk}_{\varLambda}\ \cT_{\cE,f} \geq 2$ since we have a canonical projection $\ \cT_{\cE,f} \twoheadrightarrow \cT_{\cE\bc,f}$. Moreover $\cT_{\cE,f}$ is torsion-free, hence free, over the principal ideal domain $\varLambda$, so if $\mathrm{rk}_{\varLambda}\ \cT_{\cE,f} = 2$, then necessarily $\cT_{\cE,f} \simeq \cT_{\cE\bc,f} \simeq \cT_{\cC,f}$, and to prove the proposition it suffices to prove that the case $\mathrm{rk}_{\varLambda}\ \cT_{\cE,f} =3$ cannot happen.
 
Suppose $\mathrm{rk}_{\varLambda} \ \cT_{\cE,f} = 3$. Then $\cT_{\cE,f} \not\simeq \cT_{\cE^{\bc},f}$. Since the weight map $\varLambda \to  \cT_{\cE,f}$ is finite flat, $\Spec \cT_{\cE,f}$ is equidimensional of dimension $1$. Since $\cT_{\cE,f} \not\simeq \cT_{\cE\bc,f}$ (and both are reduced by \cite[Prop.\ 5.2]{BW18}), it follows that $\Spec \cT_{\cE\bc,f}$ is strictly contained in $\Spec \cT_{\cE,f}$. This implies that
$\Spec \cT_{\cE^{\nbc},f}$ is also equidimensional of dimension $1$. Since $\mathrm{rk}_{\varLambda} \cT_{\cE\bc,f} = 2$, we have $\mathrm{rk}_{\varLambda}\ \cT_{\cE^{\nbc},f} = 1$. In this case $\cT_{\cE^{\nbc},f} \simeq \varLambda$ and hence, since $\iota$ fixes $\varLambda$, by \eqref{eq:involution hecke} it fixes $\cT_{\cE^{\nbc}}$. It thus fixes a neighbourhood of $f_{/F}$ in $\cE^{\nbc}$. By assumption this neighbourhood is not base-change, so it must necessarily contain a non-base-change classical point, which is then fixed by $\iota$. But this contradicts Lemma \ref{lem:base-change fixed}. Hence $\mathrm{rk}_{\varLambda}\ \cT_{\cE^{\nbc}} \neq 1$ and $\mathrm{rk}_{\varLambda}\ \cT_{\cE,f} \neq 3$.
\end{proof}

We can finally complete the proof of Proposition \ref{prop:spec}(ii).
\begin{corollary}\label{cor:bianchi surjective} 
	In the Bianchi case, the map $\mathrm{sp}_\lambda : \overline{\cM}_{F,\Sigma,f} \hookrightarrow \cM_{F,\lambda,f}$ is either surjective or its image is $2$-dimensional.
\end{corollary}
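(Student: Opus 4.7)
The plan is to translate the statement into a question about the rank of the Hecke algebra $\bT_{F,\Sigma,f}$ over $\Lambda_\lambda$, and then leverage the geometric dichotomy of Proposition \ref{prop:2 or 4}. By Proposition \ref{prop:spec}, $\mathrm{sp}_\lambda$ is already known to be injective, so the content of the corollary is really a statement about possible dimensions: the image lands inside the 4-dimensional space $\cM_{F,\lambda,f}$ (Proposition \ref{prop:dims}(2) combined with \eqref{eq:eichler-shimura L} and Theorem \ref{thm:control}), and we want to rule out dimensions 1 and 3.

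The first step is to pass from the cohomology side to Hecke algebras. By Proposition \ref{prop:hecke isom}, $\bT_{F,\Sigma,f}$ is free over $\Lambda_\lambda$ of rank $r \defeq \dim_L \overline{\cM}_{F,\Sigma,f}$, and dually $\overline{\cM}_{F,\Sigma,f}$ has the same dimension as $\overline{\bT}_{F,\Sigma,f} \cong \bT_{F,\Sigma,f}/\m_\lambda$. Completion and strict Henselisation preserve the rank of a free module, so under the identifications in \eqref{eq:completed local ring F}--\eqref{eq:completed local ring W} we also have $\mathrm{rk}_\varLambda \cT_{\cE,f} = r$.

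Next I invoke Proposition \ref{prop:2 or 4}, which tells us that $\mathrm{rk}_\varLambda \cT_{\cE,f}$ is either exactly $2$ or at least $4$; so $r \in \{2\} \cup \{n \geq 4\}$. On the other hand, the injectivity of $\mathrm{sp}_\lambda$ (Proposition \ref{prop:spec}) places $\overline{\cM}_{F,\Sigma,f}$ inside $\cM_{F,\lambda,f}$, which is 4-dimensional, so $r \leq 4$. Combining the two bounds forces $r \in \{2,4\}$. If $r=2$ the image is 2-dimensional as claimed; if $r=4$ the injection into a 4-dimensional target is necessarily surjective.

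I expect no substantial obstacle at this stage: all the heavy lifting — non-criticality, the duality between cohomology and Hecke algebras, and the geometric dichotomy coming from the involution $\iota$ and the base-change map — has already been done in Propositions \ref{prop:spec}, \ref{prop:hecke isom}, \ref{prop:base-change surj}, and \ref{prop:2 or 4}. The only care required is the bookkeeping between the algebraic localisation $\bT_{F,\Sigma,f}$ and the completed strictly Henselian local ring $\cT_{\cE,f}$, which is standard since both are finite over (the completion/Henselisation of) the DVR $\Lambda_\lambda$ and completion is flat and preserves ranks.
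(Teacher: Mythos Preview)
Your proposal is correct and follows essentially the same route as the paper: reduce to computing the $\Lambda_\lambda$-rank $r$ of $\bT_{F,\Sigma,f}$ via Proposition~\ref{prop:hecke isom}, pass to the strictly Henselian completion to identify $r$ with $\mathrm{rk}_{\varLambda}\,\cT_{\cE,f}$, apply Proposition~\ref{prop:2 or 4} to get $r \in \{2\} \cup \{n \geq 4\}$, and cap $r \leq 4$ using injectivity into the $4$-dimensional target. The paper's own proof is essentially a terse version of exactly this argument.
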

\begin{proof}
	By Proposition \ref{prop:hecke isom}, we know that as a $\Lambda_\lambda$-module, $\bT_{F,\Sigma,f}$ is free of rank $r = \mathrm{dim}_L[\overline{\cM}_{F,\Sigma,f}]$, and moreover $r \leq 4$ (since $\mathrm{dim}_L[\cM_{F,\lambda,f}] = 4$). Passing to strictly Henselian completions, and working with cohomology over $\overline{\Q}_p$ rather than $L$, by directly analogous arguments we deduce that $\cT_{\cE,f}$ is also free of rank $r$ over $\varLambda$. But by Proposition \ref{prop:2 or 4} we have $r = 2$ or $4$ (and if $r=4$, $\mathrm{sp}_\lambda$ is surjective).
\end{proof}

\subsection{Gorensteinness of the eigencurve at irregular points} 
We now study the structure of overconvergent cohomology locally at $p$-irregular points of the eigencurve. Let $\Sigma = \Sp(\OS)\subset \cW$ be a \emph{nice} affinoid neighbourhood of $\lambda$, that is, with $\OS$ a principal ideal domain (see after \cite[Def.~3.5]{Bel12}).

\begin{proposition}
	The Hecke algebra $\overline{\bT}_{\Sigma,f}^\eps$ is Gorenstein. Hence $\bT_{\Sigma,f}^\eps$ is Gorenstein.
\end{proposition}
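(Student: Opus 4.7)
The plan is to first establish Gorensteinness of the Artinian local algebra $\overline{\bT}_{\Sigma,f}^\eps$ by pinning down its structure explicitly, and then to bootstrap to $\bT_{\Sigma,f}^\eps$ via its flatness over the DVR $\OSl$.

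For the first step, I would use Proposition \ref{prop:spec} and Corollary \ref{cor:bianchi surjective} to realise $\overline{\cM}_{\Sigma,f}^\eps$ as a Hecke-stable $L$-subspace of the classical generalised eigenspace $\cM_{\lambda,f}^\eps$. Then by Remark \ref{rem:quotient}, $\overline{\bT}_{\Sigma,f}^\eps$ is a quotient of the classical Hecke algebra $\bT_{\lambda,f}^\eps$, whose structure is given explicitly in Proposition \ref{prop:gorenstein}: as an $L$-algebra it is either $L[X]/(X^2)$ (when $K=\Q$) or $L[X,Y]/(X^2,Y^2)$ (when $K$ is imaginary quadratic). In the classical case, Proposition \ref{prop:spec}(i) yields $\overline{\bT}_{\Sigma,f}^\eps \cong \bT_{\lambda,f}^\eps$, a complete intersection. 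In the Bianchi case, Corollary \ref{cor:bianchi surjective} leaves two possibilities: either $\mathrm{sp}_\lambda$ is surjective, so again $\overline{\bT}_{\Sigma,f} \cong \bT_{\lambda,f}$; or its image is a $2$-dimensional Hecke-stable submodule of $\cM_{\lambda,f}$, which by Corollary \ref{cor:gen eigenspace E} is cyclic (indeed free of rank one) over $\bT_{\lambda,f}$. In the latter case, such submodules correspond to $2$-dimensional ideals of $L[X,Y]/(X^2,Y^2)$, each of the form $(aX+bY)$ for some $[a:b]\in\mathbb{P}^1(L)$; a short computation shows that each such ideal is cyclic and annihilated by a principal ideal of the form $(bX-aY)$, so that the image Hecke algebra acting faithfully on it is isomorphic to $L[Z]/(Z^2)$ for a suitable linear form $Z$. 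In all three cases $\overline{\bT}_{\Sigma,f}^\eps$ is a complete intersection, hence Gorenstein.

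For the second step, I would invoke that $\OSl$, being the localisation of a PID at a height-one prime, is a regular (hence Gorenstein) local ring; that $\bT_{\Sigma,f}^\eps$ is finite free, and in particular flat, over $\OSl$ by Proposition \ref{prop:hecke isom}; and that the structural map $\OSl \to \bT_{\Sigma,f}^\eps$ is local. The standard relative Gorenstein criterion (Matsumura, \emph{Commutative Ring Theory}, Thm.~23.4) then transfers Gorensteinness from the closed fibre $\overline{\bT}_{\Sigma,f}^\eps$ to $\bT_{\Sigma,f}^\eps$.

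The main obstacle is the Bianchi $2$-dimensional case in the first step, where one must classify Hecke-stable submodules of $\cM_{\lambda,f}$ via the explicit presentation $\bT_{\lambda,f} \cong L[X,Y]/(X^2,Y^2)$ and verify that each gives a Gorenstein quotient. Once this fibre computation is complete, the lifting step is purely formal.
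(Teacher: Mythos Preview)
Your argument is correct; it differs from the paper's only in the Bianchi rank-$2$ case. There the paper uses the stronger conclusion of Proposition~\ref{prop:2 or 4}, namely the isomorphism $\cT_{\cE,f}\cong\cT_{\cC,f}$, together with Corollary~\ref{cor:base-change surj}, to identify $\overline{\bT}_{F,\Sigma,f}$ with $\overline{\bT}_{\Q,\Sigma,f}^\eps$ and thereby reduce to the already-proved classical case. You instead stay inside the classical algebra $\bT_{\lambda,f}\cong L[X,Y]/(X^2,Y^2)$, classify its $2$-dimensional ideals (all principal, generated by a nonzero linear form), and compute that each quotient is $L[Z]/(Z^2)$. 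Your route is more elementary and self-contained once Corollary~\ref{cor:bianchi surjective} is in hand; the paper's route yields extra information, an explicit identification of the Bianchi and classical family Hecke algebras at $f$. One small slip: the annihilator of the ideal $(aX+bY)$ is $(aX-bY)$, not $(bX-aY)$ (check $(aX-bY)(aX+bY)=a^2X^2-b^2Y^2=0$), but this does not affect your conclusion. The second step is identical in substance: the paper cites \cite[47.21.6]{StacksProject} for the same lifting criterion you quote from Matsumura.
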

\begin{proof}
	If $K = \Q$, then applying $\pm$-projectors to Proposition \ref{prop:spec}(i) and Theorem \ref{thm:control}, we have Hecke-equivariant isomorphisms
\begin{equation}\label{eq:sign spec}
	 \mathrm{sp}_\lambda^1 : \cMbar_{\Sigma,f}^\eps \isorightarrow \cM_{\lambda,f}^\eps \isorightarrow \hc{1}(\Y,\sV_\lambda^\vee)_{f}^\eps.
\end{equation}
Thus the Hecke algebra $\overline{\bT}_{\Q,\Sigma,f}^\eps$ acting on $\cMbar_{\Q,\Sigma,f}^\eps$ is isomorphic to $\bT_{\Q,\lambda,f}^\eps$ (Definition \ref{def:classical Hecke}), so is Gorenstein by Proposition \ref{prop:gorenstein}.

If $K = F$ is imaginary quadratic, then by Proposition \ref{prop:2 or 4} and Corollary \ref{cor:bianchi surjective} either:
\begin{itemize}\setlength{\itemsep}{0pt}
	\item $\cT_{\cE,f} \cong \cT_{\cC,f}$, which is further isomorphic to $\cT_{\cC,f}^\eps$ by \cite[Thm.\ 3.30]{Bel12}. In this case we deduce $\bT_{F,\Sigma,f} \cong \bT_{\Q,\Sigma,f}^\eps$ via Corollary \ref{cor:base-change surj}, so $\overline{\bT}_{F,\Sigma,f} = \overline{\bT}_{\Q,\Sigma,f}$, which is Gorenstein by above;
	\item or specialisation is surjective, in which case $\overline{\bT}_{F,\Sigma,f} \cong \bT_{F,\lambda,f}^\eps$ which again is Gorenstein by Proposition \ref{prop:gorenstein}.
\end{itemize}  
Now $\bT_{\Sigma,f}^\eps$ is Gorenstein by \cite[47.21.6]{StacksProject}, as $\overline{\bT}_{\Sigma,f}^\eps = \bT_{\Sigma,f}^\eps/\m_\lambda =  \bT_{\Sigma,f}^\eps/(m_\lambda)$ by Proposition \ref{prop:hecke isom}.
\end{proof}

 As in Remark \ref{rem:quotient}, Hida duality on the classical spaces induces a perfect pairing
\begin{equation}\label{eq:perfect pairing}
	\cMbar_{\Sigma,f}^\eps \times \overline{\bT}_{\Sigma,f}^\eps \longrightarrow L,
\end{equation}
so we see that $\cMbar_{\Sigma,f}^\eps$ is the dualising module of $\overline{\bT}_{\Sigma,f}^\eps$. By the general formalism of Gorenstein rings, we deduce that $\cMbar_{\Sigma,f}^\eps$ and its dual $[\cMbar_{\Sigma,f}^{\eps}]^\vee$ are both free of rank one over $\overline{\bT}_{\Sigma,f}^\eps$.

Fix $h > 2v_p(\alpha_p)$, so that $f$ appears in the slope $\leq h$ overconvergent cohomology. Note that 
\begin{align}\label{eqn:spec to lambda}
	\cMbar_{\Sigma,f}^\eps = \hc{1}(\Y,\sD_\Sigma)^\eps_{f} \otimes_{\OSl}\OSl/\m_\lambda &\cong [\hc{1}(\Y,\sD_\Sigma)^\eps_{f} \otimes_{\bT_{\Sigma,f}^\eps}\bT_{\Sigma,f}^\eps]\otimes_{\OSl}\OSl/\m_\lambda\\ 
	&\cong \hc{1}(\Y,\sD_\Sigma)^\eps_{f} \otimes_{\bT_{\Sigma,f}^\eps}\bT_{\Sigma,f}^\eps/\m_\lambda = \cM_{\Sigma,f}^\eps\otimes_{\bT_{\Sigma,f}^\eps}\bT_{\Sigma,f}^\eps/\m_\lambda\notag,
\end{align}
which by the above is free of rank 1 over $\overline{\bT}_{\Sigma,f}^\eps.$  By Proposition \ref{prop:hecke isom}, it is thus free of rank 1 over $\bT_{\Sigma,f}^\eps/\m_\lambda$. By Nakayama's lemma applied to the (non-maximal) ideal $\m_\lambda$, we deduce that $\cM_{\Sigma,f}^\eps = \hc{1}(\Y,\sD_\Sigma)^\eps_f$ is generated by a single element over $\bT_{\Sigma,f}^\eps$.

\begin{proposition}\label{prop:free rank one}
$\hc{1}(\Y,\sD_{\Sigma})_f^\eps$ is free of rank 1 over $\bT_{\Sigma,f}^\eps$.
\end{proposition}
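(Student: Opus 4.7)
The plan is to upgrade the cyclicity already established in the paragraph immediately before the proposition to freeness of rank one, using faithfulness of the Hecke action. Fixing a $\bT_{\Sigma,f}^\eps$-generator $x_0$ of $\hc{1}(\Y,\sD_\Sigma)^\eps_f$ (which the preceding Nakayama argument produces from a generator of the rank-one $\overline{\bT}_{\Sigma,f}^\eps$-module $\cMbar_{\Sigma,f}^\eps$) yields a surjective $\bT_{\Sigma,f}^\eps$-linear map
\[
\varphi : \bT_{\Sigma,f}^\eps \twoheadrightarrow \hc{1}(\Y,\sD_\Sigma)^\eps_f, \qquad T \longmapsto T\cdot x_0,
\]
and it suffices to show that $\varphi$ is injective.

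For this, I would argue that $\bT_{\Sigma,f}^\eps$ acts faithfully on $\cM_{\Sigma,f}^\eps = \hc{1}(\Y,\sD_\Sigma)^\eps_f$. On slope $\leq h$ subspaces this is tautological from the defining inclusion of \eqref{eq:Hecke alg}, $\bT(\cM_\Sigma^{\eps,\leq h})\hookrightarrow\mathrm{End}_\OS(\cM_\Sigma^{\eps,\leq h})$. To descend to the localisation at $\m_f$, one uses that $\bT_\Sigma^{\eps,\leq h}\otimes_\OS\OSl$ is a finite, hence semi-local, $\OSl$-algebra, so decomposes as a direct product of its localisations at its finitely many maximal ideals; the analogous decomposition of $\cM_\Sigma^{\eps,\leq h}\otimes_\OS\OSl$ is compatible with this action, and faithfulness restricts to each factor, in particular to $\bT_{\Sigma,f}^\eps\curvearrowright\cM_{\Sigma,f}^\eps$. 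Given this, injectivity of $\varphi$ is immediate: if $T\cdot x_0 = 0$, then by cyclicity every $y \in \cM_{\Sigma,f}^\eps$ has the form $T'\cdot x_0$, so $T\cdot y = T'T\cdot x_0 = 0$ by commutativity of $\bT_{\Sigma,f}^\eps$, whence $T$ annihilates $\cM_{\Sigma,f}^\eps$ and therefore $T = 0$.

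The main obstacle was not at the level of this proposition but in the preceding work: verifying Gorensteinness of $\overline{\bT}_{\Sigma,f}^\eps$ (which ultimately relies on the control of $\mathrm{rk}_\varLambda \cT_{\cE,f}$ via the involution $\iota$ in the Bianchi case) and combining this with Hida duality to get freeness of rank one of $\cMbar_{\Sigma,f}^\eps$. Once those are in hand, the passage from the cyclicity of $\cM_{\Sigma,f}^\eps$ to freeness of rank one reduces to the elementary principle that a faithful cyclic module over a commutative ring is free of rank one, and only the semi-local descent of faithfulness needs a brief justification.
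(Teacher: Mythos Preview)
Your approach is correct and arguably cleaner than the paper's. The key principle you invoke---a cyclic module over a commutative ring acting faithfully is automatically free of rank one---is exactly right, and it bypasses the rank-counting the paper carries out.

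For comparison, the paper argues as follows: $\cM_{\Sigma,f}^\eps$ is projective, hence free of some rank $r$, over the PID $\OSl$; then $\bT_{\Sigma,f}^\eps$ is free over $\OSl$ as a submodule of $\mathrm{Mat}_r(\OSl)$ (note this embedding is precisely the faithfulness of the localised Hecke action, which the paper invokes without comment); the perfect pairing \eqref{eq:perfect pairing} forces $\dim_L (\bT_{\Sigma,f}^\eps/\m_\lambda) = r$, so by Nakayama $\bT_{\Sigma,f}^\eps$ is also free of rank $r$ over $\OSl$; finally cyclicity gives $\cM_{\Sigma,f}^\eps \cong \bT_{\Sigma,f}^\eps/I$, and comparing $\OSl$-ranks over the domain $\OSl$ forces $I = 0$. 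Your route is more direct and makes no use of the PID hypothesis on $\OSl$.

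One step in your justification deserves tightening. The assertion that a semi-local ring ``decomposes as a direct product of its localisations at its finitely many maximal ideals'' is false in general---any semi-local domain with more than one maximal ideal, for instance the localisation of $k[x]$ at the complement of $(x)\cup(x-1)$, is a counterexample---and holds for a finite $\OSl$-algebra only if $\OSl$ is Henselian, which you have not established. Fortunately the product decomposition is unnecessary: faithfulness survives localisation at any prime $\m_f$ of $\bT_\Sigma^{\eps,\leq h}$ simply because $\cM_\Sigma^{\eps,\leq h}$ is finitely generated over $\bT_\Sigma^{\eps,\leq h}$. Indeed, if $t/s$ kills $(\cM_\Sigma^{\eps,\leq h})_{\m_f}$, then for each of the finitely many generators $m_i$ there exists $s_i \notin \m_f$ with $s_i t m_i = 0$; taking $s' = \prod_i s_i \notin \m_f$ gives $s't \in \mathrm{Ann}(\cM_\Sigma^{\eps,\leq h}) = 0$, hence $t/s = 0$.
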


\begin{proof}
Since $\hc{1}(\Y,\sD_\Sigma)^\eps_f$ is projective over the principal ideal domain $\OSl$ (see \cite[Lem.\ 4.7]{BW18}), it is free (of some rank $r$). It follows that $\bT_{\Sigma,f}^\eps$ is also free as a submodule of $\mathrm{Mat}_r(\OSl)$. Moreover by \eqref{eq:perfect pairing} we know that $\bT_{\Sigma,f}^\eps/\m_\lambda$ and $\hc{1}(\Y,\sD_\Sigma)_f^\eps\otimes_{\OSl} \OSl/\m_\lambda$ have the same rank (namely, $r$) over $\OSl/\m_\lambda \cong L$, so Nakayama says that they are both free of rank $r$ over $\OSl$. 

From above, we know $\hc{1}(\Y,\sD_\Sigma)_f^\eps$ is cyclic over $\bT_{\Sigma,f}^\eps$, and thus has form $\bT_{\Sigma,f}^\eps/I$ for some ideal $I$. As no proper quotient of $\bT_{\Sigma,f}^\eps$ is free of rank $r$ over $\OSl$, we must have $I=0$, hence the result.
\end{proof}

Overconvergent cohomology defines a rigid coherent sheaf $\cF$ on $\cE$ \cite[Thm.\ 4.2.2]{Han17}, and Proposition \ref{prop:free rank one} describes its algebraic localisation (over the algebraic local ring $\bT_{\Sigma,f}^\eps$). We wish to lift this in families, for which we must use the rigid localisation. Let $\bT_{\Sigma,x_f}^\eps$ be the rigid localisation of $\cO(\cE_{\Sigma}^{\eps, \leq h})$ at $x_f$; this is a faithfully flat extension of $\bT_{\Sigma,f}^\eps$, and the two local rings have the same completion \cite[\S7.3.2(3)]{BGR}. Tensoring Proposition \ref{prop:free rank one} with $\bT_{\Sigma,x_f}^\eps$ over $\bT_{\Sigma,f}^\eps$ shows that $\cF$ is locally a line bundle at $x_f$.

\begin{corollary}\label{cor:lift to neighbourhood}
After possibly shrinking $\Sigma$, there exists a connected component $V^\eps = \Sp(\bT_{\Sigma,V}^\eps) \subset \cE_{\Sigma}^{\eps,\leq h}$ through $x_f$ such that:
	\begin{itemize}
		\item $\cM_{\Sigma,V}^\eps \defeq\hc{1}(\Y,\sD_\Sigma)^{\eps, \leq h} \otimes_{\bT^{\eps,\leq h}_{\Sigma}} \bT_{\Sigma,V}^\eps \ \ \text{is free of rank one over } \bT_{\Sigma,V}^\eps,$
		\item and $\bT_{\Sigma,V}^\eps$ is Gorenstein.
	\end{itemize}
\end{corollary}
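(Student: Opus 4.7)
The plan is to combine Proposition~\ref{prop:free rank one}, which gives freeness of rank one at the algebraic localisation $\bT_{\Sigma,f}^\eps$, with the Gorensteinness of that localisation established immediately above, and then to propagate both properties out to a full connected component by using that they are open in $\Spec$ and then shrinking the weight affinoid $\Sigma$.

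First, observe that Gorensteinness and local freeness of a fixed rank for a finitely presented module are both open conditions on the spectrum of a Noetherian ring (affinoid algebras are excellent; for the freeness, one picks a generator of $\cM_{\Sigma,f}^\eps$ provided by Proposition~\ref{prop:free rank one}, and the loci on which the induced map fails to be surjective or injective are Zariski-closed and disjoint from $\m_f$). Using also the coincidence of completions of the algebraic and rigid localisations at $x_f$ (noted in the paragraph before the statement, via \cite[\S7.3.2]{BGR}), we obtain a rigid-analytic affinoid neighbourhood $W$ of $x_f$ in $\cE_{\Sigma}^{\eps,\leq h}$ on which $\cO_W$ is Gorenstein and the restriction of $\hc{1}(\Y,\sD_\Sigma)^{\eps,\leq h}$ is free of rank one.

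Second, to upgrade $W$ to a connected component we shrink $\Sigma$. The weight map $\kappa : \cE_{\Sigma}^{\eps,\leq h} \to \Sigma$ is finite flat, and the fibre $\bT_{\Sigma}^{\eps,\leq h} \otimes_\Lambda L$ over $\lambda$ decomposes as a finite product of Artinian local $L$-algebras, one factor being $\overline{\bT}_{\Sigma,f}^\eps$ and corresponding to the isolated point $x_f$. Lifting the primitive idempotent cutting out this factor (by Hensel's lemma on the henselisation of $\Lambda_\lambda$, and then extending to a sufficiently small affinoid $\Sigma' \subset \Sigma$), we obtain a connected component $V^\eps$ of $\cE_{\Sigma'}^{\eps,\leq h}$ containing $x_f$. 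After further shrinking $\Sigma'$ we may arrange $V^\eps \subset W$, at which point both required properties hold throughout $V^\eps$.

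The main obstacle is the last step: executing the idempotent-lifting cleanly and verifying that the resulting rigid-analytic connected component has ring of functions $\bT_{\Sigma,V}^\eps$ equal to the corresponding direct factor of $\bT_{\Sigma'}^{\eps,\leq h}$. This is standard machinery in the theory of eigenvarieties (see e.g.\ \cite{Han17,AS08}), and once in place the conclusion is immediate by tensoring the statement of Proposition~\ref{prop:free rank one} (and its Gorenstein companion) with $\bT_{\Sigma,V}^\eps$ over the global Hecke algebra.
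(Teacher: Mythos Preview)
Your proposal is correct and follows essentially the same route as the paper's proof: spread out the local freeness of Proposition~\ref{prop:free rank one} and the Gorensteinness of $\bT_{\Sigma,f}^\eps$ to a neighbourhood, then shrink $\Sigma$ so that this neighbourhood becomes a whole connected component. The paper condenses your first two paragraphs into a single citation (``rigid delocalisation'', \cite[Lem.~2.10]{BDJ17}) for lifting the local isomorphism, and then handles Gorensteinness separately by invoking closedness of the non-Gorenstein locus; your version unpacks the delocalisation into the idempotent-lifting argument explicitly. One minor caution: the weight map $\kappa$ need not be flat globally in the Bianchi setting, but your idempotent-lifting step only requires finiteness of $\bT_{\Sigma}^{\eps,\leq h}$ over $\Lambda$ together with henselianity of $\Lambda_\lambda$, so this does not affect the argument.
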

\begin{proof}
	We obtain $V^\eps$ by rigid delocalisation, lifting a local isomorphism to a neighbourhood (e.g.\ \cite[Lem.~2.10]{BDJ17}). Since the non-Gorenstein locus is closed, and the eigencurve is Gorenstein at $x_f$, we can (after possibly further shrinking $\Sigma$) take $\bT_{\Sigma,V}^\eps$ Gorenstein.
\end{proof}

\section{Multi-variable $p$-adic $L$-functions}\label{sec:multi variable}
We now use Gorensteinness of $\bT_{\Sigma,V}^\eps = \bT(\cM_{\Sigma,V}^\eps)$ to prove our main result, the variation of $p$-adic $L$-functions over $V^\eps$. Combining Corollary \ref{cor:lift to neighbourhood} with the formalism of Gorenstein rings, we deduce that the $\OS$-linear dual $[\cM_{\Sigma,V} ^{\eps}]^\vee \defeq \mathrm{Hom}_{\OS}(\cM_{\Sigma,V}^\eps,\OS)$ is free of rank one over $\bT_{\Sigma,V}^{\eps}$. 
Also note that as $V^\eps$ is a connected component, $\cM_{\Sigma,V}^\eps$ is a direct summand of $\hc{1}(\Y,\sD_\Sigma)^{\eps,\leq h}$.

We can define the Mellin transform $\mathrm{Mel}_\Lambda$ in families, simply by considering coefficients in $\OS$ rather than $L$; see \cite[\S2.4]{BW18}. For any $\lambda \in \Sigma$, this fits into a commutative diagram 
\begin{equation}\label{eq:mellin spec}
\xymatrix@C=15mm{
	\hc{1}(Y_N,\sD_\Sigma) \ar[d]^{\mathrm{sp}_\lambda}	\ar[r]^-{\mathrm{Mel}_\Lambda} & \cD(\Gp, \Lambda)\ar[d]^{\mathrm{sp}_\lambda}\\
	\hc{1}(Y_N,\sD_\lambda) \ar[r]^-{\mathrm{Mel}_\lambda} & \cD(\Gp,L),
}	
\end{equation}
where the vertical maps are induced from reduction mod $\m_\lambda$. By restriction, we obtain a map
\begin{equation}\label{eq:restricted mellin}
	\mathrm{Mel}_\Lambda\big|_{\cM_{\Sigma,V}^\eps} : \cM_{\Sigma,V}^\eps \longrightarrow \cD(\Gp,\OS),
\end{equation}
which we consider naturally as a (canonical) element $\mathrm{Mel}_{\Sigma,V}^\eps \in \cD(\Gp,\OS) \otimes_{\OS} [\cM_{\Sigma,V}^{\eps}]^\vee.$ By picking a generator $\Xi_{\Sigma,V}^\eps$ of $[\cM_{\Sigma,V}^{\eps}]^\vee $ over $\bT_{\Sigma,V}^\eps$, we consider this as an element
\[
	L_p^\eps(V) \in \cD(\Gp,\OS)\otimes_{\OS} \bT_{\Sigma,V}^\eps \ \cong\ \cD(\Gp,\bT_{\Sigma,V}^\eps),
\]
which is now not canonical but well-defined up to $(\bT_{\Sigma,V}^\eps)^\times$ (corresponding to changing $\Xi_{\Sigma,V}^\eps$).

\begin{definition}
\begin{enumerate}
\item For $K = \Q$, define the \emph{two-variable $p$-adic $L$-function} to be 
\[
	L_p(V) \defeq L_p^+(V) + L_p^-(V) \in \cD(\Gp,\bT_{\Sigma,V}^\eps).
\]
\item For $K$ imaginary quadratic, the \emph{three-variable $p$-adic $L$-function} is $L_p(V) \in \cD(\Gp,\bT_{\Sigma,V}^\eps)$.
\end{enumerate}
\end{definition}

We now show that this interpolates the constructions at single points $y \in V^\eps(L)$. For such a point we have a natural specialisation map $\mathrm{sp}_y : \bT_{\Sigma,V}^{\eps} \to L$ given by reduction modulo $\m_y \subset \bT_{\Sigma,V}^\eps$. In particular, this induces a map
\[
	\mathrm{sp}_y : \cD(\Gp,\bT_{\Sigma,V}^\eps) \longrightarrow \cD(\Gp,L).
\]

\begin{theorem}
	Let $y = y_g \in V^\eps(L)$ be any classical point corresponding to a small slope cuspidal eigenform $g$ of weight $\lambda_g$. Then there exists a $p$-adic period $c_g^\eps$, depending only on $g$ and $\eps$, such that
\[
	\mathrm{sp}_y(L_p^\eps(V)) = c_g^\eps \cdot L_p^\eps(g).
\]
\end{theorem}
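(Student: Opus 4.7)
The plan is to derive the interpolation formula as a formal consequence of the rank-one freeness of $\cM_{\Sigma,V}^\eps$ over $\bT_{\Sigma,V}^\eps$ (Corollary \ref{cor:lift to neighbourhood}) combined with the compatibility \eqref{eq:mellin spec} of the Mellin transform with weight specialisation. I would trace $\mathrm{sp}_{y_g}$ through two stages: weight specialisation at $\lambda_g$, passing from families back to classical overconvergent cohomology, followed by Hecke specialisation at $\m_{y_g}/\m_{\lambda_g}$ cutting out the contribution of $g$. The resulting distribution will then match $L_p^\eps(g)$ up to a $p$-adic period $c_g^\eps$ measuring the discrepancy of chosen generators, exactly in the spirit of Remark \ref{rem:alternative}.

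First, factor $\mathrm{sp}_{y_g}$ as $\bT_{\Sigma,V}^\eps \twoheadrightarrow \bT_{\Sigma,V}^\eps/\m_{\lambda_g} \twoheadrightarrow \bT_{\Sigma,V}^\eps/\m_{y_g} \cong L$. Applying the first quotient to $L_p^\eps(V) \in \cD(\Gp,\bT_{\Sigma,V}^\eps)$, diagram \eqref{eq:mellin spec} identifies the family Mellin transform with $\mathrm{Mel}_{\lambda_g}$ acting on $\cM_{\Sigma,V}^\eps/\m_{\lambda_g}$. Rank-one freeness exhibits the latter as free of rank one over the finite semilocal $L$-algebra $\bT_{\Sigma,V}^\eps/\m_{\lambda_g}$, whose maximal ideals index classical points of $V^\eps$ above $\lambda_g$. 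Further localisation at the ideal corresponding to $y_g$ yields $\cM_{\Sigma,V}^\eps/\m_{y_g} \cong L$. Since $\cM_{\Sigma,V}^\eps$ is a direct summand of $\hc{1}(\Y,\sD_\Sigma)^{\eps,\leq h}$, classical weight specialisation carries this one-dimensional quotient Hecke-equivariantly into $\cM_{\lambda_g,g}^\eps$ (the argument of Proposition \ref{prop:spec}, using principality of $\m_{\lambda_g}$, ensures injectivity on the $y_g$-local piece).

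Next, any one-dimensional Hecke-stable subspace of $\cM_{\lambda_g,g}^\eps$ must be a genuine eigenspace for the $g$-eigensystem, hence contained in $\cM_{\lambda_g}^\eps[g]$. In the $p$-irregular case this target is one-dimensional by Proposition \ref{prop:dims}; in the $p$-regular case $\cM_{\lambda_g,g}^\eps$ itself is already one-dimensional by strong multiplicity one; in either situation the image equals $\cM_{\lambda_g}^\eps[g]$. Invoking Remark \ref{rem:alternative} in this specialised setting, the Mellin transform restricted to a generator of $\cM_{\lambda_g}^\eps[g]$ recovers $L_p^\eps(g)$ up to rescaling by the chosen dual generator. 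Hence $\mathrm{sp}_{y_g}(L_p^\eps(V))$ equals $L_p^\eps(g)$ up to the scalar $c_g^\eps \in L^\times$ that records the ratio between $\mathrm{sp}_{y_g}(\Xi_{\Sigma,V}^\eps)$ and the generator $\Xi_{\lambda_g}^\eps[g]$ of $[\cM_{\lambda_g}^\eps[g]]^\vee$ used in defining $L_p^\eps(g)$.

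The main obstacle is to verify that the image of $\cM_{\Sigma,V}^\eps/\m_{y_g}$ in $\cM_{\lambda_g}^\eps[g]$ is non-zero, without which $c_g^\eps$ could vanish. Non-criticality of $g$ (Theorem \ref{thm:control}) guarantees that the eigenclass $\Phi_g^\eps$ lifts uniquely, and the rank-one freeness of $\cM_{\Sigma,V}^\eps$ over $\bT_{\Sigma,V}^\eps$ ensures that $\cM_{\Sigma,V}^\eps/\m_{y_g}$ is genuinely one-dimensional; then any Hecke-equivariant map between one-dimensional $L$-vector spaces labelled by the same eigensystem is either zero or an isomorphism, and it cannot be zero as $y_g$ lies inside the connected component $V^\eps$ by hypothesis (so the family construction records a non-trivial eigensystem at $y_g$). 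Once non-vanishing is established, $c_g^\eps$ is the scalar intrinsic to the fixed normalisations (in particular the choice of $\Phi_g^\eps$, hence depending only on $g$ and $\eps$ once $L_p^\eps(V)$ has been fixed), completing the identification.
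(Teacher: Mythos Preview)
Your approach is essentially the paper's: factor $\mathrm{sp}_{y_g}$ through weight specialisation at $\lambda_g$ followed by reduction modulo $\m_{y_g}$, track the Mellin transform through diagram \eqref{eq:mellin spec}, and invoke Remark \ref{rem:alternative} to identify the result with $c_g^\eps L_p^\eps(g)$. The paper additionally shrinks $V^\eps$ so that $y_g$ is the unique point above $\lambda_g$, making the intermediate Hecke algebra local rather than semilocal, but this is cosmetic.

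One step needs tightening. You write that weight specialisation carries the quotient $\cM_{\Sigma,V}^\eps/\m_{y_g}$ into $\cM_{\lambda_g,g}^\eps$, and then worry about non-vanishing of its image in $\cM_{\lambda_g}^\eps[g]$. But the injection of Proposition \ref{prop:spec} is $\overline{\cM}_{\Sigma,g}^\eps \hookrightarrow \cM_{\lambda_g,g}^\eps$, and this does \emph{not} factor through the further quotient by $\m_{y_g}$; the directions are reversed (the eigenspace is a \emph{submodule} of $\overline{\cM}_{\Sigma,g}^\eps$, not a quotient). The clean argument lives on the dual side, exactly as in Remark \ref{rem:alternative}: since $[\cM_{\Sigma,V}^\eps]^\vee$ is free of rank one over $\bT_{\Sigma,V}^\eps$ with generator $\Xi_{\Sigma,V}^\eps$, reduction modulo $\m_{\lambda_g}$ sends this to a generator $\overline{\Xi}_{\Sigma,g}^\eps$ of $[\overline{\cM}_{\Sigma,g}^\eps]^\vee$ over $\overline{\bT}_{\Sigma,g}^\eps$, and its further restriction to $\cM_{\lambda_g}^\eps[g]$ is then automatically non-zero because generators of free rank-one modules reduce to generators of quotients. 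This replaces your informal appeal to ``$y_g$ lies in the connected component'' and gives $c_g^\eps \in L^\times$ directly.
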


In particular, this applies when $y = x_f$ is the point corresponding to $f$, and in this case we can (and do) normalise so that $c_{x_f}^\eps = 1$ for each choice of $\eps$.

\begin{proof}
Let $y_g$ be such a classical point. This property is local at $g$, so we may restrict $L_p^\eps(V)$ to a neighbourhood of $g$, and without loss of generality assume $y_g$ is the only point of $V^\eps$ above $\lambda_g$. In this case reduction modulo $\m_{\lambda_g}$ is a map $\mathrm{sp}_{\lambda_g} : \cM_{\Sigma,V}^\eps \twoheadrightarrow \overline{\cM}_{\Sigma,g}^\eps \subset \cM_{\lambda,g}^\eps$.

Now reduction modulo $\m_y$ is the same as first reducing modulo $\m_{\lambda_g} \subset \m_y$ and then reducing modulo $\m_y$. We exploit this and the commutative diagram
\begin{equation}\label{eq:mellin spec 2}
	\xymatrix@C=15mm@R=3mm{
		\cM_{\Sigma,V}^\eps \ar[dd]^{\mathrm{sp}_{\lambda_g}}	\ar[r]^-{\mathrm{Mel}_\Lambda} &
		 \cD(\Gp, \Lambda)\ar[dd]^{\mathrm{sp}_{\lambda_g}}\\
		 &\\
		 \overline{\cM}_{\Sigma,g}^\eps \ar[r]^-{\mathrm{Mel}_{\lambda_g}} &
		 \cD(\Gp,L)\\
		 \cM_{\lambda,g}^\eps[g] \sar{u}{\subset} \ar[r]^-{\mathrm{Mel}_{\lambda_g}} &
		 \cD(\Gp,L) \sar{u}{=}
	}	
\end{equation}
 arising from \eqref{eq:mellin spec}. Note we already considered the bottom square in Remark \ref{rem:alternative}.
 
 As explained after \eqref{eq:restricted mellin}, the Mellin transform gives an element $\mathrm{Mel}_{\Sigma,V}^\eps \in \cD(\Gp,\Lambda) \otimes_\Lambda [\cM_{\Sigma,V}^\eps]^\vee.$ Similarly the restriction of $\mathrm{Mel}_{\lambda_g}$ to $\overline{\cM}_{\Sigma,g}^\eps$ defines a (canonical) element 
 \[
 	\overline{\mathrm{Mel}}_{\Sigma,g}^\eps \in \cD(\Gp,L) \otimes_L [\overline{\cM}_{\Sigma,g}^\eps]^\vee.
 \]
Since $\cM_{\Sigma,V}^\eps$ is finite free over $\OS$, we have a map $\mathrm{sp}_{\lambda_g} : [\cM_{\Sigma,V}^\eps]^\vee \to [\overline{\cM}_{\Sigma,g}^\eps]^\vee$ given by
\begin{align*}
	[\cM_{\Sigma,V}^{\eps}]^\vee \otimes_{\bT_{\Sigma,V}^\eps}\bT_{\Sigma,V}^\eps/\m_{{\lambda_g}}  \cong	[\cM_{\Sigma,V}^\eps]^\vee \otimes_{\OS}\OS/\m_{\lambda_g} \cong [\overline{\cM}_{\Sigma,g}^\eps]^\vee.
\end{align*}
By the commutativity of the top square in \eqref{eq:mellin spec 2}, we have
 \begin{equation}\label{eq:mellin commute}
 	\mathrm{sp}_{\lambda_g}(\mathrm{Mel}_{\Sigma,V}^\eps) = \overline{\mathrm{Mel}}_{\Sigma,g}^\eps \in \cD(\Gp,L)\otimes_L [\overline{\cM}_{\Sigma,g}^\eps]^\vee.
 \end{equation}
 Now we bring in the Hecke algebras. Since $[\cM_{\Sigma,V}^\eps]^\vee$ is free of rank one over $\bT_{\Sigma,V}^\eps$, after reducing modulo $\m_{\lambda_g}$ we see $[\overline{\cM}_{\Sigma,g}^\eps]^\vee$ is free of rank one over $\overline{\bT}_{\Sigma,g}^\eps$, and moreover the generator $\Xi_{\Sigma,V}^\eps$ we chose above reduces to a generator $\overline{\Xi}_{\Sigma,g}^\eps$. These choices of generators induce isomorphisms that sit in a commutative diagram
 \begin{equation}\label{eq:hecke commute}
 	\xymatrix{
 		\cD(\Gp,\Lambda)\otimes_\Lambda [\cM_{\Sigma,V}]^\vee \ar[r]^-{\simeq}\ar[d]^{\mathrm{sp}_{\lambda_g}} &
 		 \cD(\Gp,\Lambda) \otimes_\Lambda \bT_{\Sigma,V}^\eps \ar[d]^{\mathrm{sp}_{\lambda_g}}\\
 		 \cD(\Gp,L)\otimes_L[\overline{\cM}_{\Sigma,g}]^\vee \ar[r]^-{\simeq} &\cD(\Gp,L)\otimes_L \overline{\bT}_{\Sigma,g}^\eps.
 	}
\end{equation}
By definition $L_p^\eps(V)$ is the image of $\mathrm{Mel}_{\Sigma,V}^\eps$ along the top isomorphism. Define $\overline{\cL}_{\Sigma,g}^\eps$ to be the image of $\overline{\mathrm{Mel}}_{\Sigma,g}^\eps$ along the bottom isomorphism; combining \eqref{eq:mellin commute} and \eqref{eq:hecke commute}, we have $\mathrm{sp}_{\lambda_g}(L_p^\eps(V)) = \overline{\cL}_{\Sigma,g}^\eps.$
 
Now we use the bottom square. Note $\overline{\cL}_{\Sigma,g}^\eps$ is precisely the element $\cL_{\overline{\cM}_{\Sigma,g}^\eps}$ from Remark \ref{rem:alternative} (constructed using the generator $\overline{\Xi}_{\Sigma,g}^\eps$). As described in that remark, reducing mod $\m_g$ sends $\overline{\cL}_{\Sigma,g}^\eps$ to $c_g^\eps L_p^\eps(g)$ for some $c_g^\eps \in L^\times$, whence
\[
	\mathrm{sp}_y(L_p^\eps(V)) = \mathrm{sp}_{\lambda_g}(L_p^\eps(V)) \newmod{\m_g} = \overline{\cL}_{\Sigma,g}^\eps \newmod{\m_g} = c_g^\eps\cdot L_p^\eps(g),
\]
as required.
	\end{proof}

\begin{remarks}
	\begin{itemize}
			\item 
		Let $\sX(\Gp)$ be the rigid analytic space of $p$-adic characters on $\Gp$; it is 1-dimensional for $K=\Q$, and 2-dimensional for $K = F$ imaginary quadratic. Under the Amice transform (see \cite{ST01}), the distribution $L_p(V)$ gives a rigid analytic function $\cL_p : V\times \sX(\Gp)\to L$, which is the function described in the introduction.
		\item
From the interpolation propery satisfied by the $L_p^\eps(g)$, it follows immediately that $L_p(V)$ interpolates all the critical $L$-values of all classical forms $g$ in $V$, recalling that when $K=\Q$, the functions $L_p^+(g)$ and $L_p^-(g)$ will interpolate the critical $L$-values corresponding to even and odd characters respectively.
	\item
		Our construction is clearly inspired by \cite[Rem.~4.16]{Bel12}. It is possible to give a more explicit, and less conceptual, definition of $L_p^\pm$, by exhibiting an explicit eigenclass $\Phi_V^\eps \in \hc{1}(\Y,\sD_{\Sigma})^{\eps,\leq h}$ interpolating the eigenclasses $\Phi_g^\eps$ as $g$ varies in $V$. This is the approach explained in detail in \cite{Bel12}. One takes a generator $\Psi_V$ of $\cM_{\Sigma,V}^\eps$ over $\bT_{\Sigma,V}^\eps$ using Proposition \ref{prop:free rank one}, and observes that this interpolates generators of $\cMbar_{\Sigma,g}^\eps$ over $\overline{\bT}_{\Sigma,g}^\eps$ (via \eqref{eqn:spec to lambda}). The class $\Phi_V^\eps$ is then obtained by modifying by explicit Hecke operators at $p$, as in the definition of $\Phi$ in \cite[\S4.3.3]{Bel12}. The multi-variable $p$-adic $L$-function $L_p^\eps(V)$ is then the Mellin transform of $\Phi_V^\eps$.

	\end{itemize}
\end{remarks}

\section{$p$-adic Artin formalism}\label{sec:artin formalism}
We now distinguish between the classical and Bianchi cases; as such, let $F$ be an imaginary quadratic field in which $p$ splits, and fix $f$, as studied above, to be a classical modular form, with base-change $f_{/F}$. The classical complex $L$-functions of $f$ and $f_{/F}$ are related by Artin formalism
\[
	L(f_{/F},s) = L(f,s)L(f,\chi_{F/\Q},s),
\]
where $\chi_{F/\Q}$ is the quadratic character associated to $F/\Q$. For the $p$-adic picture, it is more convenient to adopt the formulation of Tate's thesis, in which case for a Hecke character $\varphi$ of $\Q$, it becomes
\begin{equation}\label{eqn:artin formalism}
	L(f_{/F},\varphi\circ N_{F/\Q}) = L(f,\varphi)L(f,\varphi\chi_{F/\Q}).
\end{equation}

We have a (two-variable) $p$-adic $L$-function $L_p(f_{/F})$ on $\Gp(F)$ attached to the base-change. Inspired by \eqref{eqn:artin formalism}, we define a (one-variable) locally analytic distribution $L_p^{\cyc}(f_{/F})$ on $\Gp(\Q)$ by
\[
\int_{\Gp(\Q)} \phi \cdot dL_p^{\cyc}(f_{/F}) \defeq \int_{\Gp(F)} (\phi\circ N_{F/\Q}) \cdot dL_p(f_{/F}).
\]
This operation is the restriction of $L_p(f_{/F})$ to the cyclotomic line inside $\cl_F(p^\infty)$.

Moreover, on the classical side, we have \emph{twisted} $p$-adic $L$-functions arising from twisted Mellin transforms; in particular, there is a $p$-adic $L$-function $L_p(f\otimes\chi_{F/\Q})$ interpolating the twisted critical $L$-values $L(f,\chi\cdot\chi_{F/\Q},j)$ for $\chi$ of $p$-power conductor. This is explained in detail in \cite[\S7.1]{BW18}.

We then have the following $p$-adic analogue of \eqref{eqn:artin formalism}.

\begin{theorem}\label{thm:artin formalism}
We have an equality
\[
	L_p^{\cyc}(f_{/F}) = L_p(f)L_p(f\otimes\chi_{F/\Q})
\]
of distributions on $\Gp(\Q)$.	
\end{theorem}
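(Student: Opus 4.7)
The plan, following the strategy of \cite{BW18} indicated in the introduction, is to deduce the identity by specialising at $x_f$ a corresponding factorisation of \emph{multi-variable} $p$-adic $L$-functions over the Coleman family $V$ through $f$ constructed in Section \ref{sec:multi variable}. Let $L_p(V), L_p(V \otimes \chi_{F/\Q}) \in \cD(\Gp(\Q), \bT_{\Sigma,V})$ be the two-variable $p$-adic $L$-functions (the twisted variant arising by a straightforward modification using twisted Mellin transforms as in \cite[\S7.1]{BW18}) and let $L_p(V_{/F}) \in \cD(\Gp(F), \bT_{\Sigma,V})$ be the three-variable Bianchi $p$-adic $L$-function. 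Define the cyclotomic restriction $L_p^{\cyc}(V_{/F}) \in \cD(\Gp(\Q), \bT_{\Sigma,V})$ exactly as for a single form above. The plan is to prove the family-level identity
\[
    L_p^{\cyc}(V_{/F}) = L_p(V) \cdot L_p(V \otimes \chi_{F/\Q})
\]
in $\cD(\Gp(\Q), \bT_{\Sigma,V})$ and then specialise at $y = x_f$, using the normalisation $c_{x_f} = 1$ of Section \ref{sec:multi variable} to obtain the theorem. Crucially, this detour through families is necessary precisely because, as noted in the introduction, the identity cannot be seen directly from interpolation and growth properties at $f$.

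To prove the family-level identity, I would verify it on a Zariski-dense set of classical points of $V \times \sX(\Gp(\Q))$. For any small-slope, $p$-regular, non-CM classical cuspform $g$ in $V$ and each Dirichlet character $\chi|\cdot|^j$ critical for $g$, complex Artin formalism gives
\[
    \Lambda(g_{/F}, (\chi|\cdot|^j)\circ N_{F/\Q}) = \Lambda(g,\chi|\cdot|^j) \cdot \Lambda(g,\chi\chi_{F/\Q}|\cdot|^j).
\]
A direct comparison of the explicit $p$-adic interpolation factors from Section \ref{sec:p-adic L-function} then yields the multiplicativity $C(g, (\chi|\cdot|^j)\circ N_{F/\Q}) = C(g,\chi|\cdot|^j) \cdot C(g,\chi\chi_{F/\Q}|\cdot|^j)$, and the periods match provided $\Omega_{g_{/F}}$ is chosen as the appropriate algebraic multiple of $\Omega_g^+\Omega_g^-$ indicated in the introduction. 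Combined with Theorems \ref{intro:interpolation} and \ref{intro:interpolation bianchi}, this shows that the two-variable Amice transforms of the two sides of the family identity agree at a Zariski-dense collection of classical points, and hence coincide as rigid analytic functions on $V \times \sX(\Gp(\Q))$. Specialising at $x_f$ then gives the theorem.

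The main technical obstacle is the careful bookkeeping of periods and specialisation constants: one must verify that the scalars $c_g$ produced by Theorems \ref{intro:interpolation} and \ref{intro:interpolation bianchi} at each $p$-regular classical point $y_g$ combine consistently across the three $p$-adic $L$-functions, rather than only up to a nowhere-vanishing analytic function on $V$. This hinges on making a correct, explicit choice of the Bianchi period $\Omega_{g_{/F}}$ in terms of the classical periods $\Omega_g^\pm$, an algebraic compatibility that was worked out for a single form in \cite[Prop.\ 7.8]{BW18} and needs to be propagated along the family. A secondary subtlety is ensuring that $p$-regular classical points are Zariski-dense in a neighbourhood of $x_f$ in $V$, which follows from density results for the Coleman--Mazur eigencurve together with the fact that $p$-irregularity is a Zariski-closed condition.
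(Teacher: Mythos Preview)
Your proposal is correct and follows the same overall strategy as the paper: prove a family-level factorisation
\[
L_p^{\cyc}(V_{/F}) = L_p(V)\cdot L_p(V\otimes\chi_{F/\Q})
\]
(up to a unit in $\cO(V)^\times$) and then specialise at $x_f$.

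The one structural difference is in how the family identity is verified. You propose to check agreement of the Amice transforms directly on a Zariski-dense subset of the product $V\times\sX(\Gp(\Q))$. The paper instead argues pointwise in $V$: it first shrinks $\Sigma$ so that a Zariski-dense set of classical $g\in V$ have \emph{extremely} small slope, meaning $v_p(a_p(g))<(k_g+1)/2$, and then at each such $g$ concludes $L_p^{\cyc}(g_{/F})=L_p(g)L_p(g\otimes\chi_{F/\Q})$ as distributions (up to scalar) from interpolation \emph{plus admissibility}, since the product on the right then has order $2v_p(a_p(g))<k_g+1$ and is thus uniquely determined by its values at the $k_g+1$ critical twists. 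Zariski-density in $V$ alone then gives the family statement, with the per-point scalars interpolated to a single unit in $\cO(V)^\times$ as in \cite[Prop.~7.9]{BW18}. This packaging makes the period bookkeeping you flag as the main obstacle more transparent: one only ever compares lines at each $g$, and the remaining ambiguity is visibly a nowhere-vanishing function on $V$, which is exactly absorbed into the ``up to $\cO(V)^\times$'' in the family statement (and, correspondingly, the equality at $f$ is to be read as an equality of lines in $\cD(\Gp(\Q),L)$). Your density-in-the-product argument reaches the same conclusion but leaves that bookkeeping slightly more implicit.
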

\begin{remark}
Note that both sides are really only well-defined up to scalars (up to changing the periods), so this equality is more properly seen as an equality of lines in the (uncountable dimensional) vector space $\cD(\Gal_p(\Q),L)$; this is explained in detail in \cite[\S7.2]{BW18}. 
\end{remark}
We actually deduce this from the analogous result in families. Again, using twisted Mellin transforms (from \cite[\S7.1]{BW18}), we obtain a twisted $p$-adic $L$-function $L_p(V\otimes \chi_{F/\Q})$ over $V$.

\begin{proposition}\label{prop:artin formalism families}
	Up to shrinking $\Sigma$ and renormalising by $\cO(V)^\times$, we have an equality
	\[
	L_p^{\cyc}(V_{/F}) = L_p(V)L_p(V\otimes\chi_{F/\Q})
	\]
	of $\Lambda$-valued distributions on $\Gp(\Q)$.	
\end{proposition}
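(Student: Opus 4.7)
The plan is to establish the equality on a Zariski-dense set of classical points in $V$ and then interpolate, following the strategy of \cite[\S7]{BW18}. Via Amice transform, both sides of the proposed identity correspond to rigid analytic functions on $V \times \mathscr{X}(\Zp^\times)$, so it suffices to exhibit a Zariski-dense set $Z \subset V$ over which the equality of cyclotomic distributions holds pointwise.

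For $Z$, I would take the set of classical points $y_g \in V$ corresponding to cuspidal eigenforms $g$ of weight $k_g + 2$ with $k_g > k$, where $k+2$ is the weight of $f$. Since slope is constant on the Coleman family $V$, equal to $v_p(\alpha_f) = (k+1)/2$, such $g$ satisfy $2v_p(\alpha_g) = k + 1 \leq k_g$, and classical points of weight $> k$ are Zariski-dense in $V$. At each $y_g$, one has a pointwise identity
\[
L_p^{\cyc}(g_{/F}) = c_g \cdot L_p(g) \cdot L_p(g \otimes \chi_{F/\Q})
\]
as distributions on $\Zp^\times$, for an explicit $c_g \in L^\times$; this is \cite[Thm.~7.4]{BW18} applied pointwise. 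Its proof combines (i) classical Artin formalism for the complex $L$-functions, (ii) the interpolation formulas for the three $p$-adic $L$-functions together with a comparison of Euler factors at $p$, yielding equality at every critical character $\chi\cdot z^j$ with $0 \leq j \leq k_g$, and (iii) admissibility uniqueness: both sides are $(2v_p(\alpha_g))$-admissible, and since $2v_p(\alpha_g) \leq k_g$, they are each determined by their values at critical characters.

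For the interpolation step, by Theorems \ref{intro:interpolation} and \ref{intro:interpolation bianchi} the specialisations at $y_g$ of $L_p^{\cyc}(V_{/F})$, $L_p(V)$ and $L_p(V \otimes \chi_{F/\Q})$ recover the corresponding single-form $p$-adic $L$-functions up to scalars coming from period normalisations. The multi-variable $p$-adic $L$-functions are defined via the Mellin transform of family eigenclasses arising from generators of the Hecke-free overconvergent cohomology modules from Corollary \ref{cor:lift to neighbourhood} and Proposition \ref{prop:free rank one}; these generators vary rigid-analytically on $V$, and consequently the pointwise constants $c_g$ are the evaluations at $y_g$ of a single element $c \in \cO(V)^\times$, after shrinking $\Sigma$. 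Absorbing $c$ into the period normalisation, the difference
\[
\mathcal{E} \defeq L_p^{\cyc}(V_{/F}) - L_p(V) \cdot L_p(V \otimes \chi_{F/\Q})
\]
vanishes at every $y_g \in Z$; as $Z$ is Zariski-dense in $V$, the attached rigid analytic function on $V \times \mathscr{X}(\Zp^\times)$ vanishes identically.

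The main obstacle is step (iii) -- the admissibility uniqueness at classical points -- together with showing that the pointwise constants $c_g$ assemble into a genuine unit $c \in \cO(V)^\times$. The former is subtle because $f$ itself has slope $(k+1)/2$, at which the product $L_p(f) L_p(f \otimes \chi_{F/\Q})$ has admissibility $k+1$ and is \emph{not} uniquely determined by the $k+1$ critical values for $L(f,s)$, as highlighted following Theorem \ref{thm:artin formalism-main}; this is precisely why we must argue by interpolation from nearby classical points of higher weight rather than directly at $f$. The rigid-analytic interpolation of the $c_g$ is essentially formal given the construction of family eigenclasses in \S\ref{sec:multi variable}, but requires a careful compatibility check between the Bianchi and classical period normalisations using the algebraicity of $\Omega_{g_{/F}}$ as a multiple of $\Omega_g^+ \Omega_g^-$.
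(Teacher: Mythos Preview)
Your proposal is correct and follows essentially the same route as the paper: shrink $\Sigma$ so that slope is constant equal to $(k+1)/2$, take the Zariski-dense set of classical points $g$ with $k_g > k$ (equivalently, the paper's condition $v_p(\alpha_g) < (k_g+1)/2$), use admissibility uniqueness to get the pointwise factorisation at each such $g$, and then interpolate over $V$ after absorbing the period constants into a unit of $\cO(V)$. The paper's proof is terser, deferring the pointwise step and the handling of the constants to \cite[\S7.3, Prop.~7.9]{BW18} rather than \cite[Thm.~7.4]{BW18}, but the argument is the same.
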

\begin{proof}
	The slope of a Coleman family at $p$ is locally constant; shrinking $\Sigma$, we may assume:
	\begin{itemize}
		\item[(1)] the slope of every point of $V$ is $v_p(a_p(f))$,
		\item[(2)] and a Zariski-dense set of classical points $g$ in $V$ have `extremely' small slope, that is, $v_p(a_p(g)) = v_p(a_p(g)) < \frac{k_g+1}{2},$ where $g$ has weight $k_g + 2$.
	\end{itemize}
	It follows that at all such $g \in V$, and up to renormalising the $p$-adic periods, we have $L_p^{\cyc}(g_{/F}) = L_p(g)L_p(g\otimes\chi_{F/\Q})$, since in this case both sides interpolate the same classical $L$-values (using \eqref{eqn:artin formalism}) and are admissible of order $v_p(a_p(f))$ (which, under (2), is enough to determine them uniquely). This is proved fully in \cite[\S7.3]{BW18}. The result then follows exactly as in the proof of Prop.~7.9 \emph{op.\ cit}., where the result is given for $p$-regular forms.
\end{proof}

Theorem \ref{thm:artin formalism} then follows by specialising Proposition \ref{prop:artin formalism families} at the point $f$.

%==============================================================================

\begin{appendices}

	\titleformat{\section}
	{\large\bfseries\centering}% formatting commands to apply to the whole heading
	{Appendix \thesection.}% the label and number
	{0.5em}% space between label/number and subsection title
	{}% formatting commands applied just to subsection title
	[]% punctuation or other commands following subsection title
	
\section{Towards Hida duality for modular symbols}\label{sec:pairing Hecke}

Let $\cM$ be a space with an action of the abstract Hecke algebra $\bH_N$ (see \S\ref{sec:notation}), and let $\bT(\cM)$ be the image of $\bH_N$ in $\mathrm{End}(\cM)$. In nice situations, one hopes for a duality between $\cM$ and $\bT(\cM)$. If $\cM$ is a space of (classical or Bianchi) modular forms of weight $\lambda$, for example, this is given by Hida duality, arising from the perfect pairing sending $(f,T)$ to the leading Fourier coefficient of $Tf$. This duality gives good control over the structure of the Hecke algebra.

We would like this duality in families. For $\GL_2/\Q$, there are two methods for variation in families, via overconvergent modular forms or symbols. The former has a good notion of $q$-expansions, allowing us to extend Hida duality in families \cite[Prop.B.5.6]{Col97}. By \footnote{\cite[Thm.\ 3.30]{Bel12} is an instance of \emph{$p$-adic Langlands functoriality}, see e.g.\ \cite[Thm.\ 3.2.1]{JoNew}. This says that if one has two eigenvarieties $\cE,\cE'$, a Zariski-dense set of classical points $\cE_{\mathrm{cl}} \subset \cE$, and a `Langlands functoriality' $\cE_{\mathrm{cl}} \to \cE'$, then this interpolates to a map $\cE \to \cE'$. This gives inverse maps between the Coleman--Mazur eigencurve of modular forms and the $\GL_2/\Q$-eigencurve of modular symbols, hence an identification of the respective Hecke algebras.} \cite[Thm.\ 3.30]{Bel12}, the Hecke algebras acting on modular forms and modular symbols coincide, so we (indirectly) get control on the Hecke algebra for modular symbols (as explained in Proposition \ref{prop:hecke isom}).

In the Bianchi setting, no theory of overconvergent modular forms exists; thus to study families we must use modular symbols, which have no notion of $q$-expansions. In this appendix, we prove results towards an analogue of Hida duality for modular symbols in families. 

Let $\Sigma = \mathrm{Sp}(\Lambda) \subset \cW$ be an affinoid in the weight space for $\GL_2$, which we identify with its image in the parallel Bianchi weight space. For further notation, we refer to \S\ref{sec:classical cohomology},\ref{sec:oc cohomology},\ref{sec:geometry at irregular points}. We use evaluation maps on modular symbols to exhibit a pairing between $\cM_\Sigma \defeq \hc{1}(Y_N,\sD_\Sigma)$ and its Hecke algebra $\bT_{\Sigma} = \bT(\cM_\Sigma)$, and prove that it is non-degenerate locally at any finite slope classical cuspidal eigenform. In the main text, this result provides control over the size of this Hecke algebra in Proposition \ref{prop:hecke isom}.

For simplicity of exposition, we focus on the (more difficult) Bianchi case. However, all of these results also hold, with directly analogous proofs, for modular symbols for $\GL_2/\Q$.

\subsection{Functionals on the cohomology}

We define \emph{evaluation maps}, functionals on the cohomology, as follows. Recall that $\Symb_\Gamma(\cD_\Sigma)$ is the space of $\Gamma$-invariant homomorphisms $\mathrm{Div}^0(\mathbf{P}^1(F)) \to \cD_\Sigma$; see \cite{Wil17,BW18} for further notation/definitions (for example of the arithmetic groups $\Gamma_i$).
\begin{enumerate}[(i)]
	\item We have an identification of the cohomology with modular symbols (see \cite[\S2.4]{BW18})
	\[
	\hc{1}(Y_N, \sD_\Sigma) \cong \bigoplus_{i \in \mathrm{Cl}(F)} \symb_{\Gamma_i}(\cD_\Sigma).
	\]
	\item Evaluation at $\{0\}-\{\infty\}$ defines a map $\oplus_{i \in \mathrm{Cl}(F)} \symb_{\Gamma_i}(\cD_\Sigma) \to \oplus_i \cD_\Sigma$.
	\item Taking the sum, we get a map $ \oplus_i \cD_\Sigma \to \cD_\Sigma$.
	\item Finally, taking total measure $\mu \mapsto \int\mu$ over $\cO_F\otimes_{\Z}\Zp$, we get a map $\cD_\Sigma \to \cO(\Sigma) = \Lambda$.
\end{enumerate} 
Write $\mathrm{Ev}_\Sigma : \hc{1}(Y_N,\sD_\Sigma) \to \Lambda$ for the composition of all of these maps. Since modular symbols are homomorphisms, $\mathrm{Ev}_\Sigma$ is a linear map of $\Lambda$-modules. Similarly, for any single weight $\mu \in \Sigma$ we have an analogous map $\mathrm{Ev}_\mu : \hc{1}(Y_N,\sD_\mu) \to \overline{\Q}_p$ of $\overline{\Q}_p$-vector spaces; and we have a commutative diagram
\begin{equation}\label{eq:compatibility evaluations}
	\xymatrix@C=15mm@R=7mm{
		\hc{1}(Y_N,\sD_\Sigma) \ar[r]^-{\mathrm{Ev}_{\Sigma}}\ar[d]^{\mathrm{sp}_\mu} & \cO(\Sigma)\ar[d]^{\mathrm{sp}_\mu} \\
		\hc{1}(Y_N,\sD_\mu) \ar[r]^-{\mathrm{Ev}_\mu} & \overline{\Q}_p,
	}
\end{equation}
where $\mathrm{sp}_\mu$ are the natural maps induced by evaluation at $\mu$.

\begin{remark}
	Alternatively, $\mathrm{Ev}_\mu$ is exactly the composition $\mathrm{Ev}_{\varphi,2} \circ \rho$ in \cite[\S11]{BW_CJM}, for $\varphi$ the trivial character. Both $\mathrm{Ev}_\mu$ and $\mathrm{Ev}_\Sigma$ are closely related to the total measure of the Mellin transform from \S\ref{sec:p-adic L-function} (though not identical; in $\mathrm{Mel}_\lambda$, there is a restriction to $(\cO_F\otimes_{\Z}\Zp)^\times$ that we omit above).
\end{remark}

Now let $g$ be any non-critical slope cuspidal Bianchi eigenform (over $F$) of parallel weight $\mu = (k_\mu, k_\mu)$, where $k_\mu \geq 2$. As in \cite[\S1.2]{Wil17}, via the Fourier expansion we have an attached $L$-function $L(g,s)$. (Note that this is not normalised; e.g.\ if $\gamma \in \C$ is a scalar, then $L(\gamma g,s) = \gamma L(g,s)$). Via \S\ref{sec:eichler-shimura}, after fixing a period $\Omega_g$ we obtain an attached algebraic class $\phi_g \in \hc{1}(Y_N,\sV_\mu^\vee(L))_g$. Via non-critical slope and Theorem \ref{thm:control}, we have a unique lift $\Phi_g$ of $\phi_g$ to $\hc{1}(Y_N,\sD_\mu)_g$.
\begin{proposition}\label{prop:ev non-zero}
	We have $\mathrm{Ev}_\mu(\Phi_g) \neq 0.$
\end{proposition}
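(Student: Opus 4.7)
The plan is to relate $\mathrm{Ev}_\mu(\Phi_g)$ to a classical critical $L$-value of $g$ via specialisation, and then invoke a non-vanishing result. First, I would use the compatibility \eqref{eq:compatibility evaluations} together with the characterisation of $\Phi_g$ as the unique lift of the classical eigenclass $\phi_g$ (guaranteed by the non-critical slope hypothesis and Theorem \ref{thm:control}). Integrating a distribution against the constant function $\mathbf{1}$ factors through the specialisation $\rho_\mu : \cD_\mu \to \sV_\mu^\vee$, since it only tests against algebraic polynomial data. Hence
\[
    \mathrm{Ev}_\mu(\Phi_g) \;=\; \sum_{i \in \mathrm{Cl}(F)} \bigl\langle\, \phi_{g,i}(\{0\}-\{\infty\}),\, \mathbf{1}\,\bigr\rangle,
\]
where $\phi_{g,i}$ denotes the component of $\phi_g$ on the $i$-th connected component of $Y_N$, $\langle \cdot,\cdot\rangle : V_\mu^\vee \otimes V_\mu \to \overline{\Q}_p$ is the duality pairing, and $\mathbf{1}$ denotes the constant polynomial (corresponding, after dehomogenisation, to $Y^{k_\mu}\bar Y^{k_\mu} \in V_\mu$).

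Second, I would compute this pairing explicitly using the Bianchi Eichler--Shimura description of $\phi_g$ in \cite[\S2]{Wil17}: each $\phi_{g,i}(\{0\}-\{\infty\})$ arises from integrating $g$ against a harmonic differential form along the geodesic from $\infty$ to $0$, and pairing with $\mathbf{1}$ isolates a specific coefficient. A direct Mellin-transform computation then identifies
\[
    \mathrm{Ev}_\mu(\Phi_g) \;=\; C_g \cdot L(g, s_0),
\]
for an explicit non-zero factor $C_g$ (involving the period $\Omega_g$, an interpolation factor at $p$ coming from the $p$-stabilisation, and combinatorial constants) and a specific critical integer $s_0 \in \{1,\dots,k_\mu+1\}$ determined by the test polynomial $\mathbf{1}$.

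The main obstacle is the final step: proving that $L(g,s_0)\neq 0$, since critical values of cuspidal Bianchi $L$-functions can vanish in general. The cleanest argument I see proceeds by contradiction: if $\mathrm{Ev}_\mu(\Phi_g)=0$, one twists the evaluation map by Hecke characters $\chi$ of $p$-power conductor and, using Hecke-equivariance of $\Phi_g$ together with a Birch-lemma-style identity, shows that the twisted evaluations $\mathrm{Ev}_\mu^\chi(\Phi_g)$ compute (up to non-zero explicit factors) the values $L(g,\chi,s_0)$. Vanishing for all such $\chi$ would contradict Rohrlich-style non-vanishing of twisted automorphic $L$-values in the imaginary quadratic setting. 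An alternative route is to exploit that $\Phi_g\neq 0$ and that its Mellin transform recovers the non-zero $p$-adic $L$-function $L_p(g)$; one may then compare $\mathrm{Ev}_\mu(\Phi_g)$ with an appropriate specialisation of $L_p(g)$, carefully accounting for the discrepancy between integration over $\cO_p$ and over $\cO_p^\times$, to deduce the non-vanishing.
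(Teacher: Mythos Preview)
Your first two steps are correct and match the paper: one computes directly that $\mathrm{Ev}_\mu(\Phi_g)$ is an explicit non-zero constant times $L(g,1)$ --- this is the specific value $s_0=1$ picked out by the constant polynomial $\mathbf{1}$ (equivalently the monomial $Y^{k_\mu}\overline{Y}^{k_\mu}$), via \cite[Thm.~2.11]{Wil17}. Your mention of an interpolation factor at $p$ in $C_g$ is spurious: evaluating at $\{0\to\infty\}$ and integrating over all of $\cO_p$ (rather than $\cO_p^\times$) produces no such factor; the constant involves only $\Omega_g$, the discriminant, and combinatorial terms.

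The genuine gap is the final step. You treat non-vanishing of $L(g,s_0)$ as the hard part and propose twisted non-vanishing or $p$-adic arguments, but neither works as written. Your contradiction argument is broken: assuming $\mathrm{Ev}_\mu(\Phi_g)=0$ tells you nothing about the twisted evaluations $\mathrm{Ev}_\mu^\chi(\Phi_g)$, so there is no way to force $L(g,\chi,s_0)=0$ for all $\chi$ and hence no contradiction with Rohrlich. Your alternative route is circular: asserting that $L_p(g)$ is ``non-zero'' without further input rests on exactly the kind of $L$-value non-vanishing you are trying to establish. The paper's argument is elementary and you missed it: $s_0=1$ lies at the edge of the critical strip, the centre of the functional equation is $s=1+k_\mu/2$, and the hypothesis $k_\mu\geq 2$ (which you never used) ensures that $s=1$ is strictly away from the centre. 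The functional equation then reflects $L(g,1)$ into the half-plane where the Euler product converges absolutely, so it cannot vanish.
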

\begin{proof}
	Write $\phi_g = (\phi_g^i) \in \oplus_{i \in \mathrm{Cl}(F)}\symb_{\Gamma_i}(V_\mu^\vee(L))$ in terms of modular symbols. From the definition, we see that $\mathrm{Ev}_\mu(\Phi_g) = \sum_{i}\phi_g^i\{0 \to \infty\}(1)$. (In the notation of \cite[\S2]{Wil17}, this is evaluation at $Y^k\overline{Y}^k$). By \cite[Thm.\ 2.11]{Wil17}, we then have
	\[
	\mathrm{Ev}_\mu(\Phi_g) = \sum_{i \in \mathrm{Cl}(F)}\phi_g^i\{0 \to \infty\}(1) = \left[\frac{(-1)^{k_\mu}8(\pi i)^2}{D \Omega_g\#\cO_F^\times}\right] \cdot L(g,1).
	\]
	This is at the bottom of the critical range. Since $k_\mu \geq 2$, this value is far from the centre of symmetry $s = 1+ k_\mu/2$ for the functional equation, and thus $L(g,1) \neq 0$ (since the functional equation reflects this value into a half-plane with an absolutely convergent Euler product).
\end{proof}

\subsection{Pairings between Hecke algebras and cohomology}

\begin{definition}
	By linearity of $\mathrm{Ev}_\Sigma$ and the Hecke action, we may define a pairing
	\[
	\langle-,-\rangle_\Sigma\ :\ \bT_{\Sigma}\ \times\ \hc{1}(Y_N,\sD_\Sigma) \longrightarrow \Lambda = \cO(\Sigma)
	\]
	of $\Lambda$-modules by $\langle T, \Phi\rangle_\Sigma \defeq \mathrm{Ev}_\Sigma(T\cdot \Phi).$ Similarly for each $\mu \in \Sigma$, letting $\bT_{\mu} \defeq \bT(\hc{1}(Y_N,\sD_\mu))$ we have a pairing
	\[
	\langle-,-\rangle_\mu\ : \ \bT_{\mu}\ \times\ \hc{1}(Y_N,\sD_\mu) \longrightarrow \overline{\Q}_p
	\]
	of $\overline{\Q}_p$-vector spaces given by $\langle T,\Phi\rangle_{\mu} \defeq \mathrm{Ev}_\mu(T\cdot \Phi)$.
\end{definition}

\begin{proposition}\label{prop:pairing spec}
	Let $T \in \bT_\Sigma$ and $\Phi \in \hc{1}(Y_N,\sD_\Sigma)$. If $\mu \in \Sigma$, then 
	\[
		\mathrm{sp}_\mu\big(\langle T ,\Phi\rangle_\Sigma\big) = \big\langle \mathrm{sp}_\mu(T), \mathrm{sp}_\mu(\Phi)\big\rangle_\mu.
	\]
\end{proposition}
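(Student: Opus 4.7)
The plan is to do a straightforward diagram chase, combining the compatibility \eqref{eq:compatibility evaluations} of evaluation with specialisation with the Hecke-equivariance of specialisation. The proof is essentially formal once one unravels the definitions.

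First, I would observe that specialisation at $\mu$ is Hecke-equivariant on the cohomology: since the Hecke operators are defined via correspondences on $Y_N$ acting on the coefficient system, and the specialisation map $\cD_\Sigma \twoheadrightarrow \cD_\mu$ (reduction modulo $\mathfrak{m}_\mu$) is a map of $\Sigma_0(p)$-modules, the induced map $\mathrm{sp}_\mu : \hc{1}(Y_N, \sD_\Sigma) \to \hc{1}(Y_N, \sD_\mu)$ commutes with the $\bH_N$-action. Consequently, for any $T \in \bT_\Sigma$ and $\Phi \in \hc{1}(Y_N, \sD_\Sigma)$,
\[
\mathrm{sp}_\mu(T \cdot \Phi) = \mathrm{sp}_\mu(T) \cdot \mathrm{sp}_\mu(\Phi),
\]
where $\mathrm{sp}_\mu(T) \in \bT_\mu$ denotes the image of $T$ under the natural map $\bT_\Sigma \to \bT_\mu$ (well-defined since $\mathrm{sp}_\mu$ intertwines the actions of $\bH_N$ on source and target, so the image of $\bH_N$ in $\mathrm{End}(\hc{1}(Y_N,\sD_\Sigma))$ maps to its image in $\mathrm{End}(\hc{1}(Y_N,\sD_\mu))$).

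Next, I apply $\mathrm{Ev}_\mu$ to both sides of this identity and use the commutative diagram \eqref{eq:compatibility evaluations}, which gives
\[
\mathrm{sp}_\mu \bigl(\mathrm{Ev}_\Sigma(T \cdot \Phi)\bigr) = \mathrm{Ev}_\mu\bigl(\mathrm{sp}_\mu(T \cdot \Phi)\bigr) = \mathrm{Ev}_\mu\bigl(\mathrm{sp}_\mu(T) \cdot \mathrm{sp}_\mu(\Phi)\bigr).
\]
Unwinding the definitions of the pairings, the left-hand side is $\mathrm{sp}_\mu(\langle T, \Phi\rangle_\Sigma)$ and the right-hand side is $\langle \mathrm{sp}_\mu(T), \mathrm{sp}_\mu(\Phi)\rangle_\mu$, yielding the claim.

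There is no real obstacle here: the only point to check carefully is that the map $\bT_\Sigma \to \bT_\mu$ obtained by restriction to the specialisation is consistent with the $\bH_N$-actions on both modules. This is immediate from the fact that both Hecke algebras are, by definition, quotients of $\bH_N$ (equivalently, images of $\bH_N$ in the respective endomorphism rings), and the specialisation map intertwines the $\bH_N$-actions because the local system maps $\sD_\Sigma \to \sD_\mu$ is a map in the category on which the Hecke correspondences act functorially.
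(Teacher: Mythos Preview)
Your proof is correct and follows exactly the same approach as the paper, which simply records that the result ``follows from \eqref{eq:compatibility evaluations} and Hecke-equivariance of $\mathrm{sp}_\mu$ on cohomology.'' You have merely unpacked these two ingredients in detail, including the (harmless) verification that $\mathrm{sp}_\mu(T)$ is well-defined as an operator on $\mathrm{sp}_\mu(\Phi)$.
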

\begin{proof}
	This follows from \eqref{eq:compatibility evaluations} and Hecke-equivariance of $\mathrm{sp}_\mu$ on cohomology. 
\end{proof}

Let $g \in S_\mu(\Gamma_0(N))$ be a non-critical slope cuspidal Bianchi eigenform, with $\mu = (k_\mu,k_\mu)$ for $k_\mu \geq 2$. Suppose that $g$ is the $p$-stabilisation of a $p$-regular newform for $\Gamma_0(M)$, with $N = Mp$. The restriction of $\langle-,-\rangle_\mu$ to $\hc{1}(Y_N,\sD_\mu)_g$ factors through the quotient $\bT_{\mu,g} = \bT(\hc{1}(Y_N,\sD_\mu)_g)$, giving a pairing
\[
\langle-,-\rangle_{\mu,g} \ : \ \bT_{\mu,g} \ \times \ \hc{1}(Y_N,\sD_\mu)_g \longrightarrow \overline{\Q}_p.
\]
\begin{proposition}
	For $g$ as above, the pairing $\langle-,-\rangle_{\mu,g}$ is perfect.
\end{proposition}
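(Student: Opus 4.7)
The plan is to reduce perfectness to a one-dimensional verification and then invoke Proposition \ref{prop:ev non-zero} for the non-vanishing.

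First I will show that both factors of $\langle -, - \rangle_{\mu,g}$ are one-dimensional over $\overline{\Q}_p$. Since $g$ is the $p$-stabilisation of a $p$-regular Bianchi newform $g_{\mathrm{new}}$, the Hecke polynomials at $\pri$ and $\pribar$ each have distinct roots, producing four $p$-stabilisations of $g_{\mathrm{new}}$ to level $N = Mp$ whose $(U_\pri, U_\pribar)$-eigenvalue pairs are pairwise distinct. Running the same semisimplicity argument as in Proposition \ref{prop:dims1}, the $(U_\pri, U_\pribar)$-action on the four-dimensional subspace of $S_\mu(\Gamma_0(N))$ with tame Hecke eigensystem $g_{\mathrm{new}}$ is diagonalisable, so $S_\mu(\Gamma_0(N))_g$ coincides with the eigenspace and is one-dimensional, spanned by $g$.

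Next I will transport this to overconvergent cohomology. By Eichler--Shimura in the Bianchi setting, cuspidal eigensystems appear with multiplicity one in $\hc{1}_{\mathrm{cusp}}(Y_N, \sV_\mu^\vee)$, and they do not appear in the Eisenstein complement, so $\hc{1}(Y_N,\sV_\mu^\vee)_g$ is one-dimensional. The non-critical slope assumption together with Theorem \ref{thm:control} upgrades this to $\hc{1}(Y_N,\sD_\mu)_g$ being one-dimensional, generated by the lift $\Phi_g$. Consequently $\bT_{\mu,g}$, acting faithfully on this one-dimensional $\overline{\Q}_p$-vector space, is itself one-dimensional and generated by the identity.

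Finally, a bilinear pairing between two one-dimensional vector spaces is perfect precisely when it does not vanish on a single basis pair. Testing the pair $(1, \Phi_g)$ gives
\[
\langle 1, \Phi_g \rangle_{\mu,g} \;=\; \mathrm{Ev}_\mu(\Phi_g),
\]
which is non-zero by Proposition \ref{prop:ev non-zero} (ultimately because the value $L(g,1)$ sits far enough outside the critical strip to lie in the domain of absolute convergence of the Euler product). There is no serious obstacle; the one potentially delicate point is the dimension computation for $\hc{1}(Y_N,\sV_\mu^\vee)_g$ in the Bianchi setting, where one needs both $p$-regularity (to rule out higher-dimensional generalised eigenspaces) and the fact that cuspidal eigensystems are isolated from the Eisenstein part of $\hc{1}$.
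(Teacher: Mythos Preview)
Your proof is correct and follows essentially the same approach as the paper: reduce to the one-dimensional case using $p$-regularity, non-criticality, and Eichler--Shimura, then appeal to Proposition~\ref{prop:ev non-zero} for non-vanishing on a basis pair. The paper is terser, simply asserting that ``the conditions ensure that $\hc{1}(Y_N,\sD_\mu)_g$ is a line'' and then running the same injection/dimension-count; you have unpacked this step in more detail. One minor inaccuracy in your parenthetical: $L(g,1)$ is not outside the critical strip but at its edge---the paper's phrasing is that the functional equation reflects $s=1$ into the region of absolute convergence, which is the correct mechanism.
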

\begin{proof}
	The conditions ensure that $\hc{1}(Y_N,\sD_\mu)_g$ is a line, generated by $\Phi_g$. Hence $\bT_{\mu,g}$ is also a line, with each $T \in \bT_{\mu,g}$ acting as a scalar. By Proposition \ref{prop:ev non-zero} and linearity of $\mathrm{Ev}_\mu$, it follows that $\langle T, \Phi_g \rangle_\mu = 0$ only if $T= 0$ in the quotient $\bT_{\mu,g}$. Thus the restriction $\langle T, -\rangle_{\mu,g}$ defines an injection
	\[
	\bT_{\mu,g} \hookrightarrow [\hc{1}(Y_N,\sD_\mu)_g]^\vee.	
	\]
	Since both have dimension 1, this is an isomorphism and $\langle-,-\rangle_{\mu,g}$ is perfect.
\end{proof}

\subsection{Non-degeneracy}
Now let $f$ be any classical base-change cuspidal Bianchi eigenform of level $\Gamma_0(N)$ that is a $p$-stabilisation of a newform for $\Gamma_0(M)$, with $N = Mp$. For $h \gg 0$, via $p$-adic base-change there is a point $x_f$ in a local piece $\cE_{\Sigma}^{\leq h} = \mathrm{Sp}(\bT_{\Sigma}^{\leq h})$ of the parallel weight Bianchi eigenvariety (see \S\ref{sec:surjective} or \cite[\S5.2]{BW18}).  Let $V = \mathrm{Sp}(\bT_{\Sigma,V})$ be the connected component of $\cE_{\Sigma}^{\leq h}$ containing $x_f$. Shrinking $\Sigma$, we may assume $x_f$ is the only point of $V$ above $\lambda = \kappa_F(x_f)$. 
Since $f$ is base-change, the connected component $V$ is a rigid curve (possibly with multiple irreducible components). 

Recall the notion of a very Zariski-dense subset from, for example, \cite[Def.\ A.3]{BB}.

\begin{lemma}
	Up to shrinking $\Sigma$, there exist very Zariski-dense sets $\Sigma^{\mathrm{cl}} \subset \Sigma(\overline{\Qp})$ and $V^{\mathrm{cl}} \defeq \kappa_F^{-1}(\Sigma^{\mathrm{cl}}) \subset V(\overline{\Q}_p)$ such that each $y \in V^{\mathrm{cl}}$ is classical, corresponding to a cuspidal Bianchi eigenform $g$ such that:
	\begin{itemize}
		\item $g$ has non-critical slope at $p$;
		\item $g$ has weight $\mu = (k_\mu,k_\mu) \in \Sigma^{\mathrm{cl}}$ with $k_\mu \geq 2$;
		\item $g$ is the $p$-stabilisation of a $p$-regular newform of level $\Gamma_0(M)$.
	\end{itemize}
\end{lemma}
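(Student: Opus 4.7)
The plan is to define $\Sigma^{\mathrm{cl}}$ as the set of arithmetic parallel weights in $\Sigma$, take $V^{\mathrm{cl}} := \kappa_F^{-1}(\Sigma^{\mathrm{cl}})$ minus a thin bad locus, and use the control theorem (Theorem \ref{thm:control}) for classicality together with finite flatness of $\kappa_F$ and standard density of arithmetic weights to force very Zariski-density.

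First I would shrink $\Sigma$ so that: (i) every arithmetic parallel weight $(k_\mu,k_\mu) \in \Sigma(\overline{\Qp})$ satisfies $k_\mu \geq \max(2, (k+1)/2)$, ruling out at most finitely many small weights; and (ii) by local constancy of slope in Coleman families, the $U_p$-slope on $V$ is constantly $v_p(\alpha_p(f)) = (k+1)/2$. I then take $\Sigma^{\mathrm{cl}}$ to be the arithmetic parallel weights in $\Sigma$, which are very Zariski-dense by standard density of characters in weight space. For any such weight the non-critical slope bound $(k+1)/2 < k_\mu+1$ holds, so Theorem \ref{thm:control} forces every point of $\kappa_F^{-1}(\Sigma^{\mathrm{cl}})$ to be classical and cuspidal; finite flatness of $\kappa_F$ then makes this preimage very Zariski-dense in $V$.

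It remains to remove the classical points failing to be $p$-stabilisations of $p$-regular newforms of tame level exactly $\Gamma_0(M)$. Since the $U_p$-slope on $V$ is constantly $(k+1)/2$, a classical point $y \in V$ of weight $(k_y,k_y)$ can be $p$-irregular only if $\alpha_p(y)^2 = p^{k_y+1}$, which forces $(k_y+1)/2 = (k+1)/2$, i.e.\ $k_y = k$; hence the $p$-irregular locus in $V$ is contained in the finite fibre $\kappa_F^{-1}(\lambda)$. Similarly, classical points coming from newforms of a level strictly dividing $M$ or new at $p$ cut out a closed analytic subset of $V$ (the intersection with the image of lower-level eigenvarieties under the $p$-adic Langlands transfer of \S\ref{sec:surjective}); since $x_f$ is the $p$-stabilisation of a genuine $\Gamma_0(M\cO_F)$-newform (using $(M,D)=1$ so base-change preserves conductor), this subset will be proper, and hence finite. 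Removing these finitely many points preserves very Zariski-density, yielding the desired set.

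The hardest step will be the oldform argument: I need to verify that no irreducible component of $V$ is contained in the oldform locus. This should follow from the fact that every irreducible component of $V$ contains $x_f$ (after further shrinking $\Sigma$, using that $V$ is the connected component through $x_f$ in $\cE_\Sigma^{\leq h}$ and that irreducible components of a connected affinoid meeting at a point can be separated away from it), together with $x_f$ being a genuine $\Gamma_0(M\cO_F)$-new $p$-stabilisation. The density statement for $\Gamma_0(M\cO_F)$-new classical points in each such component is then standard from the construction of the eigencurve via overconvergent cohomology.
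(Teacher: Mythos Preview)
Your approach differs from the paper's: instead of removing a thin bad locus, the paper simply takes $\Sigma^{\mathrm{cl}}$ to be the classical weights $(k_\mu,k_\mu)\neq\lambda$ in $\Sigma$ with $k_\mu > 2h$ and $k_\mu \geq 2$, and sets $V^{\mathrm{cl}} = \kappa_F^{-1}(\Sigma^{\mathrm{cl}})$ with no further excision. For any $y\in V^{\mathrm{cl}}$ one then has $h < k_\mu/2 < (k_\mu+1)/2$, which in one stroke rules out both $\pri$-irregular points (slope $(k_\mu+1)/2$) and $\pri$-new points (slope $k_\mu/2$, by \cite[Cor.~4.8]{BW17}). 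Only the tame-oldform locus requires the $p$-adic Langlands transfer argument, and there the paper shrinks $\Sigma$ so that $V$ misses the images $\iota_{\mathfrak{n}}(\cE_{\Sigma,\mathfrak{n}}^{\leq h})$ entirely.

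Your slope-constancy strategy can be made to work, but there is a genuine gap in your handling of the ``new at $p$'' case. You assert that $p$-new classical points lie in the image of lower-level eigenvarieties under $p$-adic Langlands transfer; this is false. That description captures the \emph{tame} oldforms (newforms of level strictly dividing $M\cO_F$), not forms new at a prime above $p$, which do not arise via stabilisation from any lower level. The correct way to dispose of $\pri$-new points within your framework is again a slope argument: a $\pri$-new form of weight $(k_\mu,k_\mu)$ has $U_{\pri}$-slope exactly $k_\mu/2$, so constant slope $h_0$ on $V$ forces $k_\mu = 2h_0$, a single weight which can be excised from $\Sigma^{\mathrm{cl}}$.

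Two minor points. First, the lemma demands $V^{\mathrm{cl}} = \kappa_F^{-1}(\Sigma^{\mathrm{cl}})$ on the nose, so you must remove bad \emph{weights} from $\Sigma^{\mathrm{cl}}$ (hence whole fibres from $V^{\mathrm{cl}}$), not individual bad points of $V^{\mathrm{cl}}$; your slope arguments do confine the irregular and $p$-new loci to finitely many weights, so this is only a matter of phrasing. Second, you fix the slope at $(k+1)/2$, which presupposes that $f$ is $p$-irregular; in the appendix the lemma is stated for an arbitrary base-change $p$-stabilisation $f$, so you should simply write the constant slope as $h_0 = v_p(\alpha_p(f))$ throughout.
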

\begin{proof}
	Let $\Sigma^{\mathrm{cl}}$ be the set of classical weights $\mu = (k_\mu,k_\mu) \neq \lambda \in \Sigma$ with $k_\mu > 2h$ and $k_\mu \geq 2$. This set is very Zariski-dense in $\Sigma$. Let $V^{\mathrm{cl}} = \kappa_F^{-1}(\Sigma^{\mathrm{cl}})$. As $h < k_\mu/2 < k_\mu + 1$, every $y \in V^{\mathrm{cl}}$ has non-critical slope. No $y \in V^{\mathrm{cl}}$ of weight $\mu \in \Sigma^{\mathrm{cl}}$ can be irregular or new at any $\pri|p$, since irregular forms (resp.\ $\pri$-new forms) have slope $h = (k_\mu+1)/2$ (resp.\ $h = k_\mu/2$) at $\pri|p$ by (resp.\ \cite[Cor.\ 4.8]{BW17}); and these slopes are impossible, since $h < k_\mu/2$. Thus each such $y$ is a $p$-regular $p$-stabilisation.

It remains to show that each $y \in V^{\mathrm{cl}}$ is a $p$-regular $p$-stabilisation of a \emph{newform}. We argue analogously to \cite[Lem.\ 2.7]{Bel12}. We know every $y \in V^{\mathrm{cl}}$ arises from a newform at level $\n$ with $\n|M\cO_F$. For every such $\n$, applying $p$-adic Langlands functoriality to ''stabilisation to tame level $M$'' gives a closed immersion $\iota_{\n} : \cE_{\Sigma,\n}^{\leq h} \hookrightarrow \cE_{\Sigma}^{\leq h}$ from the tame level $\n$ to tame level $M\cO_F$ eigencurves, and since $x_f$ is new at level $M$, it is not in the image of any $\iota_{\n}$. We can thus shrink $\Sigma$ to avoid the image of $\iota_{\n}$ for any $\n|M\cO_F$, whence all points in a neighbourhood of $x$ are new at level $M$. 
\end{proof}

Let $\cM_{\Sigma,V} \defeq \hc{1}(Y_N,\sD_\Sigma)^{\leq h} \otimes_{\bT_{\Sigma}^{\leq h}} \bT_{\Sigma,V}$. As $V$ is a connected component, $\bT_{\Sigma,V}$ and $\cM_{\Sigma,V}$ are direct summands of $\bT_{\Sigma}$ and  $\hc{1}(Y_N,\sD_\Sigma)^{\leq h}$ respectively, and $\mathrm{Ev}_\Sigma$ can be restricted to $\cM_{\Sigma,V}$. We thus obtain a pairing
\[
\langle-,-\rangle_{\Sigma,V} \ :\  \bT_{\Sigma,V} \ \times \ \cM_{\Sigma,V} \longrightarrow \Lambda.
\]
If $y \in V(\overline{\Q}_p)$, we use a subscript $y$ for the (algebraic) localisation at the corresponding maximal ideal $\m_y \subset \bT_{\Sigma,V}$ (thus if $y$ is classical, corresponding to an eigenform $g$, we have $-_y = -_g$).

\begin{proposition}\label{prop:etale at good}
	Let $y \in V^{\mathrm{cl}}$ correspond to an eigenform $g$ of weight $\mu$. The weight map $\kappa_F : V \to \Sigma$ is \'etale at $y$, we have $\bT_{\Sigma,g}/\m_\mu \cong \bT_{\mu,g}$, and we have an isomorphism of 1-dimensional vector spaces
	\begin{equation}\label{eq:spec at g}
		\cM_{\Sigma,V}\otimes_{\bT_{\Sigma,V}} \bT_{\Sigma,V}/\m_y \cong \hc{1}(Y_N,\sD_\Sigma)^{\leq h}_g \otimes_{\Lambda_\mu} \Lambda_\mu/\m_\mu \cong \hc{1}(Y_N,\sD_\mu)_g.
	\end{equation}
\end{proposition}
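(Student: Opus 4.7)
My plan is to deduce all three claims from the single structural statement that $\cM_{\Sigma,V,y}$ is free of rank one over $\Lambda_\mu$. The main work is one-dimensionality at the single weight $\mu$; the rest is a routine cohomological-family argument followed by a formal endomorphism-ring manipulation.

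\emph{Step 1: $\dim_L \cM_{\mu,g} = 1$.} Since $g$ is a $p$-regular $p$-stabilisation of a cuspidal newform, the argument of Proposition \ref{prop:dims1}, now with $p$-regularity ensuring that $U_\pri$ and $U_{\pribar}$ act diagonalisably with distinct eigenvalues on the relevant Iwahori-fixed vectors, shows via strong multiplicity one for $\mathrm{GL}_2/F$ that $S_\mu(\Gamma_0(N))_g = S_\mu(\Gamma_0(N))[g]$ is one-dimensional. The Bianchi Eichler--Shimura isomorphism (cf.\ Proposition \ref{prop:dims}) transports this to $\hc{1}(Y_N,\sV_\mu^\vee)_g$, and the non-critical slope condition combined with Theorem \ref{thm:control} then gives $\dim_L \cM_{\mu,g} = 1$.

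\emph{Step 2: $\cM_{\Sigma,V,y}$ is free of rank one over $\Lambda_\mu$.} Exactly as in the proof of Proposition \ref{prop:spec}, the long exact sequence attached to
\[
0 \to \cD_\Sigma \xrightarrow{\ m_\mu\ } \cD_\Sigma \to \cD_\mu \to 0,
\]
localised at the cuspidal system of $g$ (so that the $\hc{0}$-contribution vanishes), simultaneously shows that multiplication by the uniformiser $m_\mu$ is injective on $\cM_{\Sigma,V,y}$ and that the induced specialisation $\overline{\cM}_{\Sigma,g} \defeq \cM_{\Sigma,V,y}/m_\mu\cM_{\Sigma,V,y} \hookrightarrow \cM_{\mu,g}$ is injective. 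The first gives $\Lambda_\mu$-freeness, since $\Lambda_\mu$ is a DVR. As $y\in V$, the module $\cM_{\Sigma,V,y}$ is nonzero (it is the fibre at $y$ of the coherent sheaf of overconvergent cohomology over $V$), and Nakayama forces $\overline{\cM}_{\Sigma,g}\neq 0$; the injection into the line $\cM_{\mu,g}$ from Step 1 is therefore an isomorphism, so $\cM_{\Sigma,V,y}$ has rank exactly one.

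\emph{Step 3: Conclusion.} Faithfulness of the $\bT_{\Sigma,V}$-action on $\cM_{\Sigma,V}$ descends to faithfulness of $\bT_{\Sigma,V,y}$ on $\cM_{\Sigma,V,y}$ by a standard zero-divisor argument using finite generation of $\cM_{\Sigma,V}$ over $\bT_{\Sigma,V}$. The structural $\Lambda_\mu$-algebra map therefore factors as
\[
\Lambda_\mu \ \longrightarrow \ \bT_{\Sigma,V,y} \ \hookrightarrow \ \mathrm{End}_{\Lambda_\mu}(\cM_{\Sigma,V,y}) \ \cong \ \Lambda_\mu,
\]
with composition the identity (each $a\in\Lambda_\mu$ acts as scalar multiplication by $a$). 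Both arrows are thus isomorphisms, which is precisely the \'etaleness of $\kappa_F$ at $y$. It follows immediately that $\bT_{\Sigma,g}/\m_\mu \cong \Lambda_\mu/m_\mu\Lambda_\mu = L$, identified with $\bT_{\mu,g}$ via its faithful action on the line $\cM_{\mu,g}$, and that $\cM_{\Sigma,V}\otimes_{\bT_{\Sigma,V}}\bT_{\Sigma,V}/\m_y = \overline{\cM}_{\Sigma,g} \cong \cM_{\mu,g}$, giving the chain of isomorphisms of \eqref{eq:spec at g}.
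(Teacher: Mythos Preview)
Your proof is correct. The paper's own proof is essentially a citation block: \'etaleness and the second isomorphism are deferred to \cite[Thm.~4.5]{BW18}, the first isomorphism is declared ``an immediate consequence of \'etaleness'', and the Hecke-algebra statement is referred to \cite[Prop.~4.6]{Bel12} and \cite[Thm.~6.13]{BW18}. You have instead reconstructed the substance of those references in a self-contained way, using exactly the ingredients already available in the present paper (strong multiplicity one and Eichler--Shimura for one-dimensionality at $\mu$, the long exact sequence for torsion-freeness and injectivity of specialisation, and the faithful-action argument to pin down $\bT_{\Sigma,V,y}\cong\Lambda_\mu$). This is the same mechanism that underlies the cited results, so the approach is not genuinely different, just made explicit.

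Two small remarks. First, your parenthetical ``so that the $\hc{0}$-contribution vanishes'' is slightly over-justified: $Y_N$ is non-compact, so $\hc{0}(Y_N,\sD_\mu)=0$ outright, independently of cuspidality. Second, your reason for $\cM_{\Sigma,V,y}\neq 0$ (``it is the fibre at $y$ of the coherent sheaf'') is a little glib, since fibres of coherent sheaves can vanish; the clean argument is that $\bT_{\Sigma,V}$ acts faithfully on $\cM_{\Sigma,V}$ by construction, so $\mathrm{Ann}(\cM_{\Sigma,V})=0$ and the support is all of $\Spec\bT_{\Sigma,V}$. You use exactly this faithfulness in Step~3 anyway, so the fix is free.
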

\begin{proof}
	The \'etaleness is proved in \cite[Thm.\ 4.5]{BW18}, which also gives the second isomorphism in \eqref{eq:spec at g}. The first isomorphism is an immediate consequence of \'etaleness. The statement on the Hecke algebra is proved as in \cite[Prop.\ 4.6]{Bel12} (see also \cite[Thm.\ 6.13]{BW18}).
\end{proof}	

By Proposition \ref{prop:etale at good}, if $\mu \in \Sigma^{\mathrm{cl}}$, the fibre product $V \times_{\Sigma} \Sp(\Lambda/\m_\mu)$ is \'etale over $\Sp(\Lambda/\m_\mu)$, and hence
\[
\bT_{\Sigma,V} \otimes_{\Lambda}\Lambda/\m_\mu = \bT_{\Sigma,V}/\m_\mu \cong \bigoplus_{z \in \kappa_F^{-1}(\mu)} \bT_{\Sigma,V}/\m_z.
\]
Hence
\begin{align}\label{eq:direct sum y}
	\cM_{\Sigma,V} \otimes_{\Lambda}\Lambda/\m_\mu \cong \cM_{\Sigma,V}\otimes_{\bT_{\Sigma,V}}\bT_{\Sigma,V} \otimes_{\Lambda}\Lambda/\m_\mu
	&\cong \cM_{\Sigma,V}\otimes_{\bT_{\Sigma,V}} \bT_{\Sigma,V}/\m_\mu\notag\\
	& \cong 	 \cM_{\Sigma,V} \otimes_{\bT_{\Sigma,V}}\bigg[\bigoplus_{z \in \kappa_F^{-1}(\mu)}\bT_{\Sigma,V}/\m_z\bigg] \notag\\
	& 	\cong \bigoplus_{z \in \kappa_F^{-1}(\mu)} \hc{1}(Y_N,\sD_\Sigma) \otimes_{\bT_{\Sigma,V}}\bT_{\Sigma,V}/\m_z \notag\\
	&\cong \bigoplus_{z \in \kappa_F^{-1}(\mu)} \hc{1}(Y_N, \sD_\mu)_z,
\end{align}
where the last isomorphism is \eqref{eq:spec at g}. 

\begin{proposition}\label{prop:hecke injection families}
	We have an injective map of $\Lambda$-modules
	\begin{align*}
		\bT_{\Sigma,V} &\hookrightarrow [\cM_{\Sigma,V}]^\vee\\
		T &\mapsto \langle T, - \rangle_{\Sigma,V}.
	\end{align*}
\end{proposition}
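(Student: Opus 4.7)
The plan is to show that the kernel of $T \mapsto \langle T,-\rangle_{\Sigma,V}$ is contained in $\bigcap_{y \in V^{\mathrm{cl}}} \m_y$, and then conclude via reducedness of $\bT_{\Sigma,V}$ (which holds because the Bianchi eigenvariety is reduced by \cite[Prop.\ 5.2]{BW18}) together with the very Zariski-density of $V^{\mathrm{cl}}$ in $V$. Under these two inputs, that intersection equals the nilradical, hence vanishes, forcing $T = 0$.

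To realise this, I would fix $y \in V^{\mathrm{cl}}$ corresponding to a cuspidal Bianchi eigenform $g$ of weight $\mu = (k_\mu,k_\mu)$ satisfying the three bullet points of the preceding lemma. By Proposition \ref{prop:etale at good}, the natural maps induce identifications $\bT_{\Sigma,V}/\m_y \cong \bT_{\mu,g}$ and $\cM_{\Sigma,V}/\m_y \cong \hc{1}(Y_N,\sD_\mu)_g$; moreover, under the \'etale decomposition \eqref{eq:direct sum y} the latter is realised as projection onto the $y$-summand, so the map $\mathrm{sp}_y : \cM_{\Sigma,V} \twoheadrightarrow \hc{1}(Y_N,\sD_\mu)_g$ is surjective. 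Combining Proposition \ref{prop:pairing spec} with the fact that $\mathrm{Ev}_\mu$ and the Hecke action both respect the Hecke-eigenspace decomposition will yield the compatibility
\[
\mathrm{sp}_y\bigl(\langle T,\Phi\rangle_{\Sigma,V}\bigr) = \bigl\langle \mathrm{sp}_y(T),\mathrm{sp}_y(\Phi)\bigr\rangle_{\mu,g}
\]
for every $T \in \bT_{\Sigma,V}$ and $\Phi \in \cM_{\Sigma,V}$.

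Given this compatibility, if $T$ lies in the kernel, then the left-hand side vanishes for all $\Phi$; by the surjectivity of $\mathrm{sp}_y$ above, the functional $\langle \mathrm{sp}_y(T),-\rangle_{\mu,g}$ is identically zero on $\hc{1}(Y_N,\sD_\mu)_g$. The perfectness of $\langle-,-\rangle_{\mu,g}$ from the previous proposition then forces $\mathrm{sp}_y(T) = 0$, i.e.\ $T \in \m_y$; since $y \in V^{\mathrm{cl}}$ was arbitrary, the density/reducedness argument completes the proof.

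The main obstacle will be verifying the displayed compatibility, since Proposition \ref{prop:pairing spec} only gives specialisation at the weight $\mu$ and not at a chosen point $y$ above $\mu$. The key observation will be that, at a good classical weight, passing from $\m_\mu$ to $\m_y$ amounts to projecting to the $y$-factor in \eqref{eq:direct sum y}, a projection which is preserved both by the Hecke action (as it preserves generalised eigenspaces) and by the $\overline{\Q}_p$-linear functional $\mathrm{Ev}_\mu$. Once this bookkeeping is in place, the rest is immediate from the set-up.
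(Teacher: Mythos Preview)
Your overall strategy is the same as the paper's: show that any $T$ in the kernel lies in every $\m_y$ for $y \in V^{\mathrm{cl}}$, then conclude by Zariski-density. The execution, however, has a genuine gap at exactly the point you flagged as the ``main obstacle''.

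The displayed compatibility
\[
\mathrm{sp}_y\bigl(\langle T,\Phi\rangle_{\Sigma,V}\bigr) = \bigl\langle \mathrm{sp}_y(T),\mathrm{sp}_y(\Phi)\bigr\rangle_{\mu,g}
\]
is \emph{false} for general $\Phi$. Unwinding with Proposition~\ref{prop:pairing spec} and the decomposition \eqref{eq:direct sum y}, one gets
\[
\mathrm{sp}_\mu\bigl(\langle T,\Phi\rangle_{\Sigma,V}\bigr) = \sum_{z \in \kappa_F^{-1}(\mu)} \bigl\langle \mathrm{sp}_z(T), (\mathrm{sp}_\mu\Phi)_z\bigr\rangle_{\mu,z},
\]
with contributions from \emph{all} points above $\mu$, not just $y$. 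Your ``key observation'' that $\mathrm{Ev}_\mu$ is preserved by the projection to the $y$-factor is wrong: $\mathrm{Ev}_\mu$ is just a linear functional on $\hc{1}(Y_N,\sD_\mu)$ and has no reason to vanish on the other summands (indeed Proposition~\ref{prop:ev non-zero} shows it does not).

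The paper fixes this by not attempting a general compatibility at all. Instead, for each $y$ it uses the surjectivity of $\cM_{\Sigma,V} \twoheadrightarrow \bigoplus_z \hc{1}(Y_N,\sD_\mu)_z$ to choose a lift $\tilde\Phi_y$ whose specialisation is supported \emph{only} in the $y$-summand. For this particular $\Phi = \tilde\Phi_y$ the cross-terms disappear and the sum collapses to $\langle \mathrm{sp}_y(T), \Phi_y\rangle_{\mu,y}$; perfectness of this single-point pairing then forces $T \in \m_y$. So the repair is small, but it replaces your incorrect ``$\mathrm{Ev}_\mu$ respects the projection'' step with a judicious choice of test vector.
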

\begin{proof}
	Suppose there exists $T \in \bT_{\Sigma,V}$ such that $\langle T, \Phi\rangle_{\Sigma,V} = 0$ for all $\Phi \in \cM_{\Sigma,V}$. Let $\mu \in \Sigma^{\mathrm{cl}}$. 
	For any fixed $y \in \kappa_F^{-1}(\mu)$, we may choose a generator $\Phi_y$ of the line $\hc{1}(Y_N,\sD_\mu)_y$, and consider this as an element of the direct sum $\oplus_{z \in \kappa_F^{-1}(\mu)} \hc{1}(Y_N,\sD_\mu)_z$ by taking 0 in the other summands. By \eqref{eq:direct sum y}, reduction modulo $\m_\mu$ defines a surjective map from $\cM_{\Sigma,V}$ to this direct sum, so we can lift to a class $\tilde\Phi_y \in \cM_{\Sigma,V}$ such that $\mathrm{sp}_\mu(\tilde\Phi_y)$ equals $\Phi_y$ in the $y$ summand of \eqref{eq:direct sum y}, and equals 0 in the summands for $z \neq y$. By Proposition \ref{prop:pairing spec}, we have
	\[
	0 = \mathrm{sp}_\mu\left(\left\langle T,\tilde\Phi_{y}\right\rangle_{\Sigma,V}\right) = \left\langle \mathrm{sp}_\mu(T), \mathrm{sp}_\mu(\tilde\Phi_y)\right\rangle_\mu = \langle \mathrm{sp}_y(T), \Phi_y\rangle_{\mu,y},
	\]
	where $\mathrm{sp}_y(T)$ is the image of $T$ in $\bT_{\Sigma,V}/\m_y$. Since $\langle -,-\rangle_{\mu,y}$ is perfect and $\Phi_y$ is a generator, this forces $\mathrm{sp}_y(T) = 0$, that is, $T \in \m_y \subset \bT_{\Sigma,V}$. Since $\mu$ and $y$ were arbitrary, we deduce that $T \in \bigcap_{y \in V^{\mathrm{cl}}} \m_y$, that is, $T$ is a rigid analytic function on $V$ that vanishes on $V^{\mathrm{cl}}$. Since this set is Zariski-dense, we conclude $T = 0$, and hence that the given map is injective, as desired.
\end{proof}

\subsection{Consequences for Hecke algebras} 
Let $f, V$ and $\Sigma = \mathrm{Sp}(\Lambda)$ be as above. Let $\cM_{\Sigma,f} \defeq \hc{1}(Y_N,\sD_\Sigma)_f$ and $\overline{\cM}_{\Sigma,f} \defeq \cM_{\Sigma,f} \otimes_{\Lambda_\lambda}\Lambda_\lambda/\m_\lambda$. Note that by truncating the long exact sequence attached to specialisation $\cD_\Sigma \to \cD_\lambda$ (see \eqref{eq:long exact sequence}), this injects into $\hc{1}(Y_N,\sD_\lambda)_f$ and thus has finite dimension.

\begin{proposition}\label{prop:hecke free rank s}
	Let $r = \mathrm{dim}\ \overline{\cM}_{\Sigma,f}$. Up to further shrinking $\Sigma$, the Hecke algebra $\bT_{\Sigma,V}$ is free of some rank $s \leq r$ over $\Lambda$, and the local Hecke algebra $\bT_{\Sigma,f}$ is free of rank $s$ over $\Lambda_\lambda$.
\end{proposition}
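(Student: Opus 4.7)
The plan is to combine the injection from Proposition \ref{prop:hecke injection families} with the fact that $\Lambda$ is a PID to deduce freeness of $\bT_{\Sigma,V}$, then transfer this to $\bT_{\Sigma,f}$ by localisation and use the free structure of $\cM_{\Sigma,f}$ over $\Lambda_\lambda$ to bound the rank by $r$.

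First, up to shrinking $\Sigma$ we may assume that $\Lambda = \cO(\Sigma)$ is a PID (a nice affinoid in the sense of \S\ref{sec:geometry at irregular points}) and that $x_f$ is the unique point of $V$ above $\lambda$, as in the setup preceding Proposition \ref{prop:etale at good}. Proposition \ref{prop:hecke injection families} then provides an injection $\bT_{\Sigma,V} \hookrightarrow [\cM_{\Sigma,V}]^\vee = \mathrm{Hom}_\Lambda(\cM_{\Sigma,V},\Lambda)$. Since $V$ is a connected component of $\cE_\Sigma^{\leq h}$, the module $\cM_{\Sigma,V}$ is a direct summand of the finitely generated $\Lambda$-module $\hc{1}(Y_N,\sD_\Sigma)^{\leq h}$, hence is itself finitely generated; as $\Lambda$ is Noetherian, so is $[\cM_{\Sigma,V}]^\vee$. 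This dual is moreover torsion-free, because $\Lambda$ is a domain. Therefore $\bT_{\Sigma,V}$ is a finitely generated torsion-free module over the PID $\Lambda$, and hence free of some rank $s$.

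Next, the assumption that $x_f$ is the unique preimage of $\lambda$ in $V$ gives the identification $\bT_{\Sigma,V} \otimes_\Lambda \Lambda_\lambda = \bT_{\Sigma,f}$, so $\bT_{\Sigma,f}$ is free of rank $s$ over $\Lambda_\lambda$. To obtain $s \leq r$, I would localise the injection at $\m_\lambda$: since $\cM_{\Sigma,V}$ is finitely presented over the Noetherian ring $\Lambda$, the formation of $\mathrm{Hom}_\Lambda(\cM_{\Sigma,V},-)$ commutes with the flat base change $\Lambda \to \Lambda_\lambda$, yielding an injection $\bT_{\Sigma,f} \hookrightarrow [\cM_{\Sigma,f}]^\vee$. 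By \cite[Lem.~4.7]{BW18}, $\cM_{\Sigma,f}$ is projective, hence free, over the PID $\Lambda_\lambda$; Nakayama's lemma identifies its rank with $\dim_L(\cM_{\Sigma,f}\otimes_{\Lambda_\lambda}\Lambda_\lambda/\m_\lambda) = \dim_L \overline{\cM}_{\Sigma,f} = r$. The injection then forces $s \leq r$, as desired.

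The main obstacle is not in the rank bound itself, which reduces to formal module-theoretic manipulations over the PIDs $\Lambda$ and $\Lambda_\lambda$, but rather in the construction of the injection of Proposition \ref{prop:hecke injection families} upon which everything rests. That construction requires the Zariski-density in $V$ of classical points corresponding to cuspidal $p$-regular non-critical $p$-stabilisations of newforms, together with the perfectness of the specialised pairings $\langle-,-\rangle_{\mu,g}$ at such points, which in turn relies on the non-vanishing $L(g,1)\neq 0$ supplied by Proposition \ref{prop:ev non-zero}.
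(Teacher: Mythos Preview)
Your proposal is correct and follows essentially the same approach as the paper: both use the injection of Proposition~\ref{prop:hecke injection families} into $[\cM_{\Sigma,V}]^\vee$ together with $\Lambda$ being a PID to deduce freeness, and both identify the rank of $\cM_{\Sigma,f}$ as $r$ via \cite[Lem.~4.7]{BW18} and Nakayama. The only cosmetic difference is ordering: the paper first delocalises to make $\cM_{\Sigma,V}$ free of rank $r$ and reads off $s\leq r$ directly from the submodule inclusion, whereas you first obtain freeness of $\bT_{\Sigma,V}$ abstractly and then bound $s$ after localising the injection at $\m_\lambda$.
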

\begin{proof}
	By Nakayama's lemma, $\cM_{\Sigma,f}$ is generated by $r$ elements over $\Lambda_\lambda$. It is projective over $\OSl$ (see \cite[Lem.~4.7]{BW18}), hence free of rank $r$ as $\OSl$ is a principal ideal domain. Delocalising, up to shrinking $\Sigma$ and $V$ we have $\cM_{\Sigma,V}$ (hence $[\cM_{\Sigma,V}]^\vee$) is free of rank $r$ over $\Lambda$.	
	Further shrinking $\Sigma$ if necessary, we may always take $\Lambda$ to be a principal ideal domain (see discussion after \cite[Def.\ 3.5]{Bel12}). By Proposition \ref{prop:hecke injection families}, we know there is an injection $\bT_{\Sigma,V} \hookrightarrow [\cM_{\Sigma,V}]^\vee$ of $\Lambda$-modules; hence, as a submodule of a finite free module over a principal ideal domain, $\bT_{\Sigma,V}$ is also free over $\Lambda$, of some rank $s \leq r$. The local result follows by localising at $f$, since $x_f$ is the unique point of $V$ above $\lambda$.
\end{proof}
We obtain the following refinement of \cite[Lem.\ 6.3.4]{Che04}.
\begin{corollary}\label{cor:hecke isomorphism}
	If $f$ is non-critical, we have $\bT_{\Sigma,f}/\m_\lambda \cong \overline{\bT}_{\Sigma,f} \defeq \bT(\overline{\cM}_{\Sigma,f})$.
\end{corollary}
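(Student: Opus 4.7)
My plan is to show that the canonical surjection $\pi : \bT_{\Sigma,f}/\m_\lambda \twoheadrightarrow \overline{\bT}_{\Sigma,f}$, which exists because $\m_\lambda$ acts trivially on $\overline{\cM}_{\Sigma,f}$, is an isomorphism by sandwiching both $L$-vector spaces between the dimension $r \defeq \dim_L \overline{\cM}_{\Sigma,f}$.

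The upper bound is immediate from Proposition \ref{prop:hecke free rank s}: it asserts that $\bT_{\Sigma,f}$ is free of some rank $s \leq r$ over $\Lambda_\lambda$, whence $\dim_L \bT_{\Sigma,f}/\m_\lambda = s \leq r$. For the matching lower bound I would invoke the non-criticality hypothesis. By Theorem \ref{thm:control}, specialisation at $\lambda$ gives a Hecke-equivariant isomorphism $\cM_{\lambda,f} \isorightarrow \hc{1}(Y_N,\sV_\lambda^\vee)_f$, and composing with the injection $\mathrm{sp}_\lambda : \overline{\cM}_{\Sigma,f} \hookrightarrow \cM_{\lambda,f}$ from Proposition \ref{prop:spec} realises $\overline{\cM}_{\Sigma,f}$ as a Hecke-stable $L$-subspace of the classical generalised eigenspace. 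Remark \ref{rem:quotient} then applies: the classical Eichler--Shimura/Hida pairing \eqref{eqn:perfect pairing} restricts to a perfect duality between $\overline{\cM}_{\Sigma,f}$ and the image of $\bH_N$ in $\mathrm{End}_L(\overline{\cM}_{\Sigma,f})$, which is by construction $\overline{\bT}_{\Sigma,f}$. Hence $\dim_L \overline{\bT}_{\Sigma,f} = r$.

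Combining these via the surjection $\pi$ yields the chain
\[
	r \ =\ \dim_L \overline{\bT}_{\Sigma,f}\ \leq\ \dim_L \bT_{\Sigma,f}/\m_\lambda\ =\ s\ \leq\ r,
\]
forcing $s=r$ and $\pi$ to be an isomorphism. There is no real obstacle once Proposition \ref{prop:hecke free rank s} is in hand: the argument is essentially dimension counting, and the only conceptual point is the tautological identification of $\overline{\bT}_{\Sigma,f}$ with the quotient $\bT(\overline{\cM}_{\Sigma,f})$ appearing in Remark \ref{rem:quotient}, valid because both are by definition the image of $\bH_N$ in $\mathrm{End}_L(\overline{\cM}_{\Sigma,f})$.
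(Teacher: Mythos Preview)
Your proof is correct and follows essentially the same approach as the paper: establish the natural surjection $\bT_{\Sigma,f}/\m_\lambda \twoheadrightarrow \overline{\bT}_{\Sigma,f}$, bound the source from above by $s \leq r$ via Proposition~\ref{prop:hecke free rank s}, bound the target from below by $r$ via Hida duality (Remark~\ref{rem:quotient}) after embedding $\overline{\cM}_{\Sigma,f}$ into the classical cohomology using non-criticality, and conclude by dimension count. The only cosmetic difference is that the paper invokes non-criticality directly as the hypothesis rather than via Theorem~\ref{thm:control}, but this is immaterial.
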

\begin{proof}
	The map $\hc{1}(Y_N,\sD_{\Sigma}) \twoheadrightarrow \hc{1}(Y_N,\sD_{\Sigma})\otimes_\Lambda \Lambda/\m_\lambda$ is Hecke equivariant, so we get a natural map $\bT_{\Sigma,f}/\m_\lambda \to \overline{\bT}_{\Sigma,f}$; it surjects since (by definition) $\overline{\bT}_{\Sigma,f}$ is the image of $\bH_N$ in the endomorphism ring.

	We know $\overline{\cM}_{\Sigma,f} \hookrightarrow \hc{1}(Y_N,\sD_\lambda)_f \cong \hc{1}(Y_N,\sV_\lambda^\vee)_f$, where the first injection follows from truncating the long exact sequence attached to $\cD_\Sigma \to \cD_\lambda$ (see Proposition \ref{prop:spec}) and the second isomorphism is non-criticality. By Hida duality and Remark \ref{rem:quotient}, we thus know $\overline{\bT}_{\Sigma,f}$ has dimension $r = \mathrm{dim}\ \overline{\cM}_{\Sigma,f}$ over $\Lambda/\m_{\lambda}$. Now $\bT_{\Sigma,f}/\m_\lambda$ has dimension $s \leq r$ by Proposition \ref{prop:hecke free rank s}, and surjects onto $\overline{\bT}_{\Sigma,f}$; thus $s = r$ and the map is an isomorphism.
\end{proof}

\end{appendices}

\bibliographystyle{alpha}
\bibliography{master_references}

\end{document}